\documentclass[letterpaper,11pt]{amsart}
\usepackage{amssymb}
\usepackage{amsthm}
\usepackage[utf8]{inputenc}
\usepackage[margin=1.0in]{geometry}
\usepackage{pst-node}
\usepackage{tikz-cd} 
\usepackage{hyperref}
\newtheorem{theorem}{Theorem}[section]
\newtheorem{lemma}[theorem]{Lemma}
\newtheorem{proposition}[theorem]{Proposition}
\newtheorem{corollary}[theorem]{Corollary}

\newtheorem{condition/definition}[theorem]{Condition/Definition}

\usepackage[T1]{fontenc}
\usepackage{appendix}
\usepackage[english]{babel}
\usepackage[utf8]{inputenc}
\usepackage[backend=biber,style=alphabetic]{biblatex}
\addbibresource{publications.bib}

\sloppy

\theoremstyle{definition}
\newtheorem{definition}[theorem]{Definition}
\theoremstyle{remark}
\newtheorem{remark}[theorem]{Remark}
\theoremstyle{definition}
\newtheorem{Example}[theorem]{Example}
\theoremstyle{definition}

\usepackage{amsmath}
\usepackage{amsfonts}
\usepackage{tikz-cd}

\def\ol{\overline}

\def\deg{\mathrm{deg}}

\def\mf{\mathfrak}
\def\ra{\rightarrow}

\def\lim{\mathop{\rm lim}\nolimits}

\def\Spec{\mathop{\rm Spec}}
\def\Spa{\mathop{\rm Spa}}
\def\Spd{\mathop{\rm Spd}}

\def\Ker{\text{Ker}}

\def\Coker{\text{Coker}}

\def\Bun{\mathrm{Bun}}

\def\Perf{\mathrm{Perf}}

\def\Ad{\mathrm{Ad}}
\def\Lie{\mathrm{Lie}}
\def\dim{\mathrm{dim}}

\def\Cofiber{\mathrm{Cofiber}}

\def\D{\mathrm{D}}

\def\RHom{R\mathcal{H}om}

\def\GL{\mathrm{GL}}

\def\free{\mathrm{free}}
\def\tors{\mathrm{tors}}
\def\bb{\mathbb}

\def\Gal{\mathrm{Gal}}

\title{A Jacobian Criterion for Artin $v$-stacks}
\author{Linus Hamann}
\usepackage[english]{babel}
\usepackage[utf8]{inputenc}
\begin{document}
\maketitle
\textbf{Abstract.} We prove a generalization of the Jacobian criterion of Fargues-Scholze for spaces of sections of a scheme smooth quasi-projective over the algebraic Fargues-Fontaine curve \cite[Section~IV.4]{FS}. Namely, we show how to use their criterion to deduce an analogue for spaces of sections of a smooth Artin stack over the algebraic curve obtained by taking the stack quotient of such a relatively smooth quasi-projective scheme by the action of a linear algebraic group. As an application, we show various moduli stacks appearing in the Fargues-Scholze geometric Langlands program are cohomologically smooth Artin $v$-stacks and compute their $\ell$-dimensions. 
\tableofcontents
\section*{Acknowledgements} I would like to thank Dennis Gaitsgory, David Hansen, Sean Howe, and Peter Scholze for some helpful discussions related to this work. I would also like to thank the anonymous referee, for making many very helpful suggestions. 
\section*{Notation and Conventions}
\begin{enumerate}
    \item Let $\ell \neq p$ be distinct prime numbers.
    \item Let $\mathbb{Q}_{p}$ denote the $p$-adic numbers. 
    \item Let $\Perf$ denote the category of affinoid perfectoid spaces in characteristic $p$.  For $S \in \Perf$, we let $\Perf_{S}$ denote the category of affinoid perfectoid spaces over $S$.
    \item For $S \in \Perf$, we will write $X_{S}$ for the algebraic relative Fargues-Fontaine curve over $S$, as defined in the discussion proceeding \cite[Definition~3.3.2]{CS1}. 
    \item We will always use $F$ to denote an algebraically closed complete non-archimedean field in characteristic $p$. 
    \item Given a scheme $S$, an Artin $S$-stack $X$ is a stack $X \rightarrow S$ admitting a smooth representable surjective morphism from a scheme $U$ locally of finite presentation over $S$ such that the diagonal map $X \rightarrow X \times_{S} X$ is representable, separated, and quasi-compact \cite[Definition~4.1]{LMB}.
    \item We will freely use the formalism of \'etale cohomology of diamond and $v$-stacks, as developed in \cite{Ecod,FS}. In particular, we will make regular use of the notion of Artin $v$-stacks. We recall \cite[Section~IV.1]{FS} that this is a small $v$-stack $X$ such that the diagonal map $X \rightarrow X \times X$ is representable in locally spatial diamonds, and such that there is some surjective map $f: U \rightarrow X$ from a locally spatial diamond such that $f$ is separated and cohomologically smooth. 
    \item We recall that a cohomologically smooth map $f: X \rightarrow Y$ of Artin $v$-stacks is of $\ell$-dimension $d \in \frac{1}{2}\mathbb{Z}$ if $Rf^{!}(\mathbb{F}_{\ell})$ sits locally in homological degree $2d$ \cite[Definition~IV.1.17]{FS}. Unfortunately, as it stands, this is the only well-behaved notion of dimension for diamonds and $v$-stacks  (see the discussion surrounding \cite[Problem~I.11.1]{FS}); however, for most of the key applications to $\ell$-adic sheaves it is sufficient. We note that, if $f: X \rightarrow Y$ is cohomologically smooth, the sheaf $Rf^{!}(\mathbb{F}_{\ell})$ will be locally constant. In particular, any such map decomposes into a disjoint union of maps $f_{d}: Y_{d} \rightarrow X$ that are pure of $\ell$-dimension $d$. If an Artin $v$-stack $X$ is cohomologically smooth of pure $\ell$-dimension over some specified base then we will write $\dim_{\ell}(X)$ for the $\ell$-dimension. 
\end{enumerate}
\section{Introduction}
For $G/\mathbb{Q}_{p}$ a connected reductive group, Fargues and Scholze \cite{FS} have recently been able to develop the geometric tools necessary to make sense of objects like the moduli stack of $G$-bundles on the Fargues-Fontaine curve and have used it to construct a general candidate for the local Langlands correspondence of $G$. One of the key new geometric tools that aids their analysis is a kind of Jacobian criterion for certain locally spatial diamonds. In particular, if one has a scheme $Z$ smooth quasi-projective over the algebraic Fargues-Fontaine curve $X_{S}$ one can consider the following moduli space attached to it. 
\begin{definition}
For $Z$ a smooth quasi-projective scheme over $X_{S}$, we define $\mathcal{M}_{Z} \rightarrow S$ to be the functor on $\Perf_{S}$, which parametrizes, for $T \in \Perf_{S}$, sections
\[
\begin{tikzcd}
&  & Z \arrow[d] \\
& X_{T} \arrow[ur,"s",dotted] \arrow[r] & X_{S}.
\end{tikzcd}
\]
\end{definition}
We recall classically (i.e the situation where $X_{S}$ is the base-change of a smooth projective curve over an algebraically closed field $F$ and the test objects are $F$-schemes) that, if we have an $F$-point of $\mathcal{M}_{Z}$, corresponding to a map $X_{F} \rightarrow X_{S}$ and a section $s: X_{F} \rightarrow Z$ over this map, the tangent space at the point defined by $s$ should be given by $H^{0}(X_{F},s^{*}T_{Z/X_{S}})$, and the obstruction space to higher order deformations around this point should be given by $H^{1}(X_{F},s^{*}T_{Z/X_{S}})$, where $T_{Z/X_{S}}$ denotes the relative tangent bundle of $Z \ra X_{S}$. In particular, we should expect smoothness of $\mathcal{M}_{Z}$ around the point defined by $s$ to hold if this obstruction space vanishes, and that, in this case, the dimension locally around the point defined by $s$ is the dimension of the $F$-vector space $H^{0}(X_{F},s^{*}(T_{Z/X_{S}}))$. Over the Fargues-Fontaine curve, making sense of such a statement is a bit subtle. In particular, $H^{0}(X_{F},s^{*}(T_{Z/X_{S}}))$ is far from being finite-dimensional as a vector space over $\mathbb{Q}_{p}$ (the ring of global sections of $X_{F}$). However, one can extract a finite-dimensional space from the global sections by considering the pro-\'etale sheaf $\mathcal{H}^{0}(s^{*}(T_{Z/X_{F}})) \rightarrow \Spa(F)$ on $\Perf_{F}$ which sends $T \in \Perf_{F}$ to the space of sections $H^{0}(X_{T},s^{*}(T_{Z/X_{S}})_{T})$, where $s^{*}(T_{Z/X_{S}})_{T}$ denotes the base change of $s^{*}(T_{Z/X_{S}})$ to $X_{T}$. This is representable by a finite-dimensional locally spatial diamond called a Banach-Colmez space, as defined in \cite{LB}, and looks like a perfectoid open unit disc quotiented out by a pro-\'etale group action (See Proposition \ref{prop: repovergeompoint1} (1)). Roughly speaking, we should expect that, infinitesimally around the point defined by $s$, the moduli space $\mathcal{M}_{Z}$ looks like the locally spatial diamond $\mathcal{H}^{0}(s^{*}(T_{Z/X_{S}}))$. This space will define a cohomologically smooth diamond if the pullback $s^{*}(T_{Z/X_{S}})$ has positive Harder-Narasimhan (abbv. HN)-slopes (if there is some slope $0$ constituent then $\mathcal{H}^{0}(s^{*}(T_{Z/X_{S}}))$ fails to be smooth due to the appearance of profinite sets in the global sections (See Proposition \ref{prop: repovergeompoint1})). Moreover, the positive slope assumption also implies that the obstruction space $H^{1}(X_{F},s^{*}(T_{Z/X_{S}}))$ vanishes. Therefore, we should expect smoothness of the space $\mathcal{M}_{Z}$ around the points defined by such sections. This motivates the following definition.
\begin{definition}{\label{def: Mzsm}}
For $Z$ as above, we let $\mathcal{M}_{Z}^{\mathrm{sm}} \subset \mathcal{M}_{Z}$ denote the open sub-functor parametrizing sections $s: X_{T} \rightarrow Z$ such that the pullback of the tangent bundle $s^{*}(T_{Z/X_{S}})$ has positive HN-slopes after pulling back to any geometric point of $T$.  
\end{definition}
Then the Jacobian criterion of Fargues-Scholze is as follows.
\begin{theorem}{\cite[Theorem~IV.4.2]{FS}}{\label{thm: jacobiancriterion}}
For $S \in \Perf$ and $Z \rightarrow X_{S}$ a scheme smooth quasi-projective over $X_{S}$, the $v$-sheaf $\mathcal{M}_{Z}$ defines a locally spatial diamond, the map $\mathcal{M}_{Z} \rightarrow S$ is compactifiable, and the map $\mathcal{M}_{Z}^{\mathrm{sm}} \rightarrow S$ is cohomologically smooth.
\\\\
Moreover, for any geometric point $x: \Spa(F) \rightarrow \mathcal{M}_{Z}^{\mathrm{sm}}$, given by a map $\Spa(F) \rightarrow S$ and a section $s: X_{F} \rightarrow Z$, the map $\mathcal{M}_{Z}^{\mathrm{sm}} \rightarrow S$ has $\ell$-dimension at $x$ equal to the degree of $s^{*}(T_{Z/X_{S}})$.
\end{theorem}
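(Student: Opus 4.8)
The plan is to split the three assertions into two essentially independent parts: (i) that $\mathcal{M}_{Z}$ is a locally spatial diamond with $\mathcal{M}_{Z}\to S$ compactifiable, and (ii) that $\mathcal{M}_{Z}^{\text{sm}}\to S$ is cohomologically smooth of the stated $\ell$-dimension. For part (i) I would reduce along a locally closed immersion $Z\hookrightarrow\mathbb{P}^{N}_{X_{S}}$ afforded by quasi-projectivity: that a section of $\mathbb{P}^{N}_{X_{T}}$ factors through a closed subscheme is cut out by vanishing of the pullbacks of the defining equations, which is a closed condition on $T$ because $X_{T}\to T$ is proper, and that it factors through an open subscheme is likewise an open condition; so it suffices to treat $Z=\mathbb{P}^{N}_{X_{S}}$. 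There a section of $\mathbb{P}^{N}_{X_{T}}\to X_{T}$ is the same as a saturated line sub-bundle of $\mathcal{O}_{X_{T}}^{N+1}$, necessarily of the form $\mathcal{O}_{X_{T}}(-d)$ with $d\geq 0$, so $\mathcal{M}_{\mathbb{P}^{N}_{X_{S}}}$ is a union over $d\geq 0$ of open sub-$v$-sheaves of the Banach--Colmez spaces $\mathcal{H}^{0}(\mathcal{O}_{X_{S}}(d))^{N+1}$ modulo the scaling action of $\underline{\mathbb{Q}_{p}^{\times}}$; these are locally spatial diamonds, compactifiable over $S$, by Proposition~\ref{prop: repovergeompoint1} and the theory of \cite{LB}. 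Partial properness of $\mathcal{M}_{\overline{Z}}\to S$ for $\overline{Z}$ projective over $X_{S}$ follows from the valuative criterion and properness of $\mathbb{P}^{N}_{X_{S}}\to X_{S}$, and since $\mathcal{M}_{Z}$ is an open subfunctor of $\mathcal{M}_{\overline{Z}}$ this gives compactifiability of $\mathcal{M}_{Z}\to S$.

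For part (ii) I would localize around a geometric point $x=(\Spa(F)\to S,\ s\colon X_{F}\to Z)$ of $\mathcal{M}_{Z}^{\text{sm}}$ and use the deformation theory of sections of the smooth morphism $Z\to X_{S}$: the infinitesimal behaviour of $\mathcal{M}_{Z}\to S$ at $x$ is governed by the cohomology of $s^{*}T_{Z/X_{S}}$ on $X_{F}$, with first-order deformations of $s$ classified by $H^{0}(X_{F},s^{*}T_{Z/X_{S}})$ and obstructions lying in $H^{1}(X_{F},s^{*}T_{Z/X_{S}})$; globally the tangent complex at $x$ is $R\Gamma(X_{F},s^{*}T_{Z/X_{S}})$. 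By the defining condition of $\mathcal{M}_{Z}^{\text{sm}}$ the bundle $s^{*}T_{Z/X_{S}}$ has strictly positive Harder--Narasimhan slopes, so by the cohomological properties of bundles on the Fargues--Fontaine curve one has $H^{1}(X_{F},s^{*}T_{Z/X_{S}})=0$ and $\mathcal{H}^{0}(s^{*}T_{Z/X_{S}})$ is a positive Banach--Colmez space; by Proposition~\ref{prop: repovergeompoint1} together with the structure theory of such spaces, $\mathcal{H}^{0}(s^{*}T_{Z/X_{S}})\to\Spa(F)$ is cohomologically smooth, and by Riemann--Roch on $X_{F}$ (with $H^{1}$ vanishing) of $\ell$-dimension $\deg(s^{*}T_{Z/X_{S}})$. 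The vanishing of the obstruction space is exactly an infinitesimal (``formal'') smoothness statement: any thickening of a test point of $\mathcal{M}_{Z}$ along an effective Cartier divisor on, or an infinitesimal curve-thickening of, the Fargues--Fontaine curve admits the lift of the section it predicts.

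To pass from this infinitesimal picture to genuine cohomological smoothness I would combine the above with the machinery of \cite{Ecod,FS}: cohomological smoothness of a compactifiable map of locally spatial diamonds may be checked locally and after base change, so it suffices to exhibit, \'etale-locally around $x$, a local model for $\mathcal{M}_{Z}\to S$ provided by the Banach--Colmez space $\mathcal{H}^{0}(s^{*}T_{Z/X_{S}})$. A convenient way to produce such a model is deformation to the normal cone of the section $s(X_{F})\hookrightarrow Z$: since $Z\to X_{S}$ is smooth the normal cone is the total space of $s^{*}T_{Z/X_{S}}$, yielding a family over $X_{F}\times\mathbb{A}^{1}$ with general fiber $Z$ and special fiber that total space, whose associated family of section spaces degenerates $\mathcal{M}_{Z}$ (base-changed to $\Spa(F)$, near $x$) to $\mathcal{H}^{0}(s^{*}T_{Z/X_{S}})$ (near $0$); cohomological smoothness and constancy of $\ell$-dimension then propagate from the special fiber, which is literally a positive Banach--Colmez space, to the general fiber. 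Either way, the $\ell$-dimension of $\mathcal{M}_{Z}^{\text{sm}}\to S$ at $x$ is read off as $\dim_{\ell}\mathcal{H}^{0}(s^{*}T_{Z/X_{S}})=\deg(s^{*}T_{Z/X_{S}})$.

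The main obstacle is precisely this last upgrade, i.e.\ converting the tangent-complex/deformation computation into a statement about $Rf^{!}(\mathbb{F}_{\ell})$. The difficulty is that there is no literal formal-neighborhood or completed-local-ring formalism over a perfectoid base, so the comparison of $\mathcal{M}_{Z}$ near $x$ with $\mathcal{H}^{0}(s^{*}T_{Z/X_{S}})$ must be made at the level of $v$-sheaves via curve thickenings and a sufficiently robust deformation theory, and one needs in hand (a) the full theory of cohomological smoothness for locally spatial diamonds and $v$-stacks from \cite{Ecod,FS} --- stability under base change, the local-on-source and local-on-target nature, and the behaviour of $Rf^{!}$ --- and (b) the foundational structure theory of the Banach--Colmez spaces $\mathcal{H}^{0}(\mathcal{E})$ from \cite{LB}: that they are locally spatial diamonds, that the positive ones are cohomologically smooth of $\ell$-dimension $\deg\mathcal{E}$, and their behaviour in short exact sequences. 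Granting these inputs the argument runs as above; the remaining points --- the reductions of part (i), openness of the condition defining $\mathcal{M}_{Z}^{\text{sm}}$, and the Riemann--Roch bookkeeping on the Fargues--Fontaine curve --- are routine.
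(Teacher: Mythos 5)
This statement is Theorem~IV.4.2 of Fargues--Scholze, which the paper cites as an input rather than proving; there is no proof of it in the paper against which to compare. With that caveat, a few remarks on your sketch are in order.

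For part~(i), the reduction along a locally closed immersion $Z\hookrightarrow\mathbb{P}^{N}_{X_{S}}$ and the identification of $\mathcal{M}_{\mathbb{P}^{N}_{X_{S}}}$ with a union of open pieces of $\mathcal{H}^{0}(\mathcal{O}(d))^{N+1}/\underline{E^{\times}}$ is the right shape of argument, modulo the fact that a section of $\mathbb{P}^{N}_{X_{T}}$ is a rank-one \emph{quotient} (or a saturated corank-one subbundle) of $\mathcal{O}_{X_{T}}^{N+1}$, not a line subbundle, and modulo some care with partial properness versus compactifiability --- an open subfunctor of a partially proper $v$-sheaf is not automatically compactifiable, and Fargues--Scholze establish compactifiability by a somewhat more explicit argument with charts.

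For part~(ii), your deformation computation is correct as far as it goes --- the tangent complex at $x$ is $R\Gamma(X_{F},s^{*}T_{Z/X_{S}})$, the positive-slope hypothesis kills $H^{1}$, and $\mathcal{H}^{0}(s^{*}T_{Z/X_{S}})$ is a cohomologically smooth positive Banach--Colmez space of $\ell$-dimension $\deg(s^{*}T_{Z/X_{S}})$. The genuine gap is the sentence ``cohomological smoothness and constancy of $\ell$-dimension then propagate from the special fiber, which is literally a positive Banach--Colmez space, to the general fiber.'' There is no general principle in the $v$-sheaf formalism that cohomological smoothness over a fixed base propagates along a degeneration over $\mathbb{A}^{1}$ from the special fiber to nearby fibers: cohomological smoothness is a statement about $Rf^{!}(\mathbb{F}_{\ell})$ being invertible, and nothing \emph{a priori} ties the value of this complex on one fiber of the one-parameter family to its value on another, absent a flatness-type hypothesis or a ``constancy along the family'' statement that must itself be proved. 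This step is exactly where the real work in Fargues--Scholze lives. Their argument does run a deformation-to-the-normal-cone / $\mathbb{G}_{m}$-dilatation picture similar in spirit to what you describe, but the passage from the linearized model to $\mathcal{M}_{Z}$ is made via a careful $\mathbb{G}_{m}$-equivariance and hyperbolic-localization style argument together with an explicit analysis of $Rf^{!}$ on compatible charts; none of that is supplied or even gestured at concretely in your sketch, which instead treats it as a black box. You correctly flag this as ``the main obstacle,'' but the proposed resolution as written does not close it.
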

\begin{remark}
Fargues and Scholze formulate their result with the adic Fargues-Fontaine curve and adic spaces over it; however, as we will want to later consider Artin stack quotients of the spaces $Z$, we find it more technically convenient to stick to the schematic formalism. This has the small caveat that one needs to restrict to affinoid perfectoid spaces, as one cannot make sense of gluing affinoids together for the schematic Fargues-Fontaine curve. Restricting to affinoids is a rather minute restriction, since the category of perfectoid spaces with any reasonable topology is generated by affinoids. 
\end{remark}
We briefly recall the importance of this result. In particular, Fargues and Scholze can use this to show that $\Bun_{G}$ is a cohomologically smooth Artin $v$-stack of $\ell$-dimension $0$, and is more importantly equipped with certain explicit smooth charts that have many nice geometric properties (e.g \cite[Proposition~V.4.2]{FS}). Namely, in \cite[Section~V.3]{FS}, Fargues and Scholze consider moduli spaces of $P$-bundles, where $P \subset G$ is a proper parabolic subgroup. These give rise to cohomologically smooth locally spatial diamonds, represented by an iterated fibration of negative Banach-Colmez spaces (cf. the proof of Proposition \ref{prop: projcohsmooth}). One uses these moduli spaces to uniformize $\Bun_{G}$ by sending a parabolic structure to its induced $G$-bundle. To verify the desired claim, one needs to show that the fibers of this uniformization map are cohomologically smooth and compute their $\ell$-dimension. This is where the Jacobian criterion comes in. In particular, the moduli space of $P$-structures on a fixed $G$-bundle $\mathcal{E}$ on $X_{S}$ is a space of sections of a scheme  $\mathcal{E}^{\text{gm}}/P$ projective over $X_{S}$, where $\mathcal{E}^{\text{gm}}$ denotes the geometric realization of $\mathcal{E}$. These charts and their smoothness properties serve as a key computational tool for establishing many foundational results on the sheaf theory of $\Bun_{G}$. 

While this story is nice, for many applications one wants to consider moduli spaces like $\Bun_{G}$ and be able to efficiently study their smoothness properties without having to rely on explicit charts. Classically, such a formalism already exists. For example, to any Artin stack $X \rightarrow \Spec{k}$ for $k$ an algebraically closed field, one can associate to it a tangent complex $T^{*}_{X}$ (Theorem \ref{thm: tangcompl}). This is a complex of quasi-coherent sheaves on $X$ concentrated in degrees $[-1,\infty)$. The cohomology in degree $0$ encodes the tangent vectors at a point and the cohomology in degree $-1$ encodes the infinitesimal automorphisms at a point, while the higher cohomology encodes information about obstructions to higher order deformations. In particular, if one has a $k$-point $x: \Spec{k} \rightarrow X$, such that the cohomology of the derived pullback $Lx^{*}(T^{*}_{X/k})$ has vanishing cohomology in degrees $\geq 1$, the moduli space $X$ is smooth at $x$ and locally has dimension equal to the Euler characteristic of the complex $Lx^{*}(T^{*}_{X/k})$. More specifically, infinitesimally around $x$ the space $X$ looks like the Picard groupoid
\[ [H^{0}(Lx^{*}(T^{*}_{X/k}))/H^{-1}(Lx^{*}(T^{*}_{X/k}))] \]
associated to the two-term complex $Lx^{*}(T^{*}_{X/k})$ of vector spaces over $k$ (See \S \ref{subsec: ReviewofTangentComplex} for what this means), where here we regard the cohomology groups as an affine space over $k$. If we let $X = \Bun_{G}$ be the moduli space of $G$-bundles over a smooth projective curve $Y/\Spec{k}$ and consider such a point $x$ corresponding to a $G$-torsor $\mathcal{F}_{G}$ over $Y$ then the cohomology of $Lx^{*}(T^{*}_{X/k})$ is described by the cohomology of the object $\mathcal{F}_{G} \times^{G,\Ad} \mathrm{Lie}(G)[1]$ on $Y$ (See Examples \ref{ex: BunGex} and \ref{ex: sections}). We note that, since $Y$ is a curve, the complex has vanishing cohomology in degree $\geq 1$. Hence, the deformations are always unobstructed and infinitesimally it looks like
\[ [H^{1}(Y,\mathcal{F}_{G} \times^{G,Ad} \mathrm{Lie}(G))/H^{0}(Y,\mathcal{F}_{G} \times^{G,Ad} \mathrm{Lie}(G))] \rightarrow \Spec{k} \]
and from this we can see that $\Bun_{G}$ is smooth of dimension $\dim(G)(g - 1)$, where $g$ is the genus of $Y$. 

The main aim of this note is to develop a formalism that would allow similar arguments to work for spaces occurring in the Fargues-Scholze geometric Langlands program. To get at this, let us consider a different way of understanding the previous argument more in line with the type of analysis used in Theorem \ref{thm: jacobiancriterion}. In particular, recall that $\Bun_{G}$ can be written as the moduli space of sections $[Y/G] \rightarrow Y$, where $[Y/G]$ is the classifying stack of $G$ over $Y$. Given a section $s: Y \rightarrow [Y/G]$ corresponding to a $G$-bundle $\mathcal{F}_{G}$ on $Y$, we can compute that the pullback of the tangent complex $T^{*}_{[Y/G]/Y}$ is isomorphic to the complex of vector bundles $\mathcal{F}_{G} \times^{G,Ad} \mathrm{Lie}(G)[1]$, and, as seen above, the cohomology of this complex controls the deformation theory of $\Bun_{G}$ around the point corresponding to $s$, in perfect analogy to Theorem \ref{thm: jacobiancriterion}. 
 
With this in mind, it becomes tempting to consider an analogue of Theorem \ref{thm: jacobiancriterion} for spaces of sections $\mathcal{M}_{Z}$, where $Z$ is an Artin $X_{S}$-stack with an atlas given by a scheme smooth quasi-projective over $X_{S}$. We show that such an analogue indeed exists for $Z$ which are obtained as stack quotients of a scheme smooth quasi-projective over $X_{S}$ by an action of a linear algebraic group. In particular, given a scheme $Z \rightarrow X_{S}$ smooth quasi-projective over $X_{S}$ with an action of a linear algebraic group $H/\mathbb{Q}_{p}$, we consider the stack quotient $[Z/H] \rightarrow X_{S}$ and the $v$-stack of sections $\mathcal{M}_{[Z/H]} \rightarrow S$. Given a section $s: X_{T} \rightarrow [Z/H]$, we can consider the complex $Ls^{*}(T^{*}_{([Z/H])/X_{S}})$ on $X_{T}$. Since $Z$ is smooth, we can realize this as a two-term complex of vector bundles on $X_{T}$ sitting in cohomological degrees $-1$ and $0$, denoted $\mathcal{E}^{*} := \{\mathcal{E}^{-1} \rightarrow \mathcal{E}^{0}\}$, where we identify $Ls^{*}(T^{*}_{([Z/H])/X_{S}})$ with the cofiber of $\mathcal{E}^{-1} \rightarrow \mathcal{E}^{0}$ in the derived category of quasi-coherent sheaves on $X_{T}$, denoted by $|\mathcal{E}^{*}|$. It therefore follows, since $T$ is affinoid, that the cohomology of this complex is concentrated in degrees $[-1,1]$, and the cohomology in degree $1$ will be controlled by the 1st cohomology group of the $0$th cohomology sheaf of $|\mathcal{E}^{*}|$. In analogy with the above, it should follow that, given an $F$-point of $\mathcal{M}_{Z}$ corresponding to a map $X_{F} \rightarrow X_{S}$ and a section $s: X_{F} \rightarrow [Z/H]$ over it, we should expect that $s$ defines a smooth point if the $0$th cohomology sheaf of $Ls^{*}(T^{*}_{([Z/H])/X_{S}})$ defines an object with vanishing cohomology in degree $1$. This motivates the following.
\begin{definition}{\label{defn: MZHsmooth}}
For $H$ and $Z$ as above, we let $\mathcal{M}^{\mathrm{sm}}_{[Z/H]} \subset \mathcal{M}_{[Z/H]}$ denote the sub $v$-stack (See Remark \ref{rem: MZsmisavsheaf}) parameterizing sections $s: X_{T} \rightarrow [Z/H]$ such that the pullback $Ls^{*}(T^{*}_{([Z/H])/X_{S}})$ satisfies that its $0$th cohomology sheaf $H^{0}(Ls^{*}(T^{*}_{([Z/H])/X_{S}}))$ with respect to the standard $t$-structure on quasi-coherent $\mathcal{O}_{X_{T}}$-modules is subject to the following condition. For each geometric point $x: \Spa(F) \ra T$, the pullback of $H^{0}(Ls^{*}(T^{*}_{([Z/H])/X_{S}}))$ along the induced map $X_{F} \ra X_{T}$ is a coherent sheaf on $X_{F}$ with locally free direct summand having only positive slopes (Here we also allow for the possibility that the locally free direct summand is $0$). We recall that this is well-defined, since $X_{F}$ is a Dedekind scheme (\cite[Theorem~13.5.3 (1)]{SW}).
\end{definition}
\begin{remark}
In the case that $H$ is trivial, so that we are in the situation of Theorem \ref{thm: jacobiancriterion}, we note that $H^{0}(Ls^{*}(T^{*}_{([Z/H])/X_{S}})$ is just the pullback $s^{*}T_{Z/X_{S}}$; in particular, Definition \ref{defn: MZHsmooth} generalizes Definition \ref{def: Mzsm} to this stacky context.
\end{remark}
\begin{remark}
We are not sure in general if the monomorphism $\mathcal{M}_{[Z/H]}^{\mathrm{sm}} \rightarrow \mathcal{M}_{[Z/H]}$ of $v$-stacks is an open immersion (See Remark \ref{rem: MZsmisavsheaf}).
\end{remark}
Our main theorem is as follows.  
\begin{theorem}{\label{thm: stackyjacobi}}
For $S \in \Perf$, let $H$ be a linear algebraic group over $\mathbb{Q}_{p}$ and $Z \rightarrow X_{S}$ a scheme smooth quasi-projective over $X_{S}$ with an action of $H$. Then $\mathcal{M}_{[Z/H]} \rightarrow S$ defines an Artin $v$-stack, and $\mathcal{M}_{[Z/H]}^{\mathrm{sm}} \rightarrow S$ is a cohomologically smooth map of Artin $v$-stacks.
\\\\
Moreover, for any geometric point $x: \Spa(F) \rightarrow \mathcal{M}_{[Z/H]}^{\mathrm{sm}}$ given by a map $\Spa(F) \rightarrow S$ and a section $s: X_{F} \rightarrow Z$, the map $\mathcal{M}_{[Z/H]}^{\mathrm{sm}} \rightarrow S$ is locally around $x$ of $\ell$-dimension equal to the quantity: 
\[ \chi(Ls^{*}(T^{*}_{([Z/H])/X_{S}})) := \mathrm{deg}(H^{0}(Ls^{*}(T^{*}_{([Z/H])/X_{S}}))) - \mathrm{deg}(H^{-1}(Ls^{*}(T^{*}_{([Z/H])/X_{S}}))), \]
where $H^{i}$ for $i \in \bb{Z}$ denotes the $i$th cohomology sheaf and $\mathrm{deg}(-)$ denotes the degree of a coherent sheaf on $X_{F}$.
\end{theorem}
\begin{remark}
One could probably prove an analogue of Theorem \ref{thm: stackyjacobi} in the adic formalism, but this would require making sense of stack quotients $[Z/H]$, for $H$ a linear algebraic group and $Z$ a sous-perfectoid space. This is possible by using the fact that $H$-torsors over a sous-perfectoid space are sous-perfectoid (See the discussion before \cite[Theorem~19.5.2]{SW}). Then one would need to develop a theory of tangent complexes for these objects, using \cite[Section~IV.4.1]{FS} as a starting point. 
\end{remark}
\begin{remark}{\label{rem: Picardvgroupoidlocalmodel}}
If we have a section $s: X_{F} \ra Z$ defining a geometric point in $\mathcal{M}_{Z}^{\mathrm{sm}}$ then one can attach a certain Artin $v$-stack called the Picard $v$-groupoid of $Ls^{*}T^{*}_{([Z/H])/X_{S}}$. The Picard $v$-groupoid is given by the stack quotient $[\mathcal{H}^{0}(Ls^{*}T^{*}_{([Z/H])/X_{S}})/\mathcal{H}^{-1}(Ls^{*}T^{*}_{([Z/H])/X_{S}})]$, where $\mathcal{H}^{i}(Ls^{*}T^{*}_{([Z/H])/X_{S}}))$ is the functor on $\Perf_{F}$ given by the $i$th hypercohomology of $Ls^{*}T^{*}_{([Z/H])/X_{S}})$. This will be a cohomologically smooth Artin $v$-stack of pure $\ell$-dimension equal to $\chi(Ls^{*}(T^{*}_{([Z/H])/X_{S}}))$ (See Proposition \ref{prop: repofPicardingeneral}). We think of these Picard $v$-groupoids as the infinitesimal linear models for the Artin $v$-stack $\mathcal{M}_{[Z/H]}^{\mathrm{sm}}$. This is analogous to how, in the context of Theorem \ref{thm: jacobiancriterion}, the positive Banach-Colmez spaces $\mathcal{H}^{0}(Ls^{*}T_{Z/X_{S}})$ are the infinitesimal linear models for the locally spatial diamond $\mathcal{M}_{Z}^{\mathrm{sm}}$. In certain cases, one can very directly see the link between these Picard $v$-groupoids and the spaces $\mathcal{M}_{Z}^{\mathrm{sm}}$ (See Example \ref{ex: negbcspace} and the proof of Proposition \ref{prop: projcohsmooth}). This very nicely parallels the classical story relating these Picard stacks and the tangent complex described above, which we review in \S \ref{subsec: ReviewofTangentComplex}.
\end{remark}
The proof of Theorem \ref{thm: stackyjacobi} is by d\'evissage to Theorem \ref{thm: jacobiancriterion}. The key point is that, by choosing a closed embedding $H \hookrightarrow \GL_{n}$ for some sufficiently large $n$, one can assume that $H = \GL_{n}$. Then one has a natural map of $v$-stacks:
\[ \mathcal{M}_{[Z/\GL_{n}]} \rightarrow \mathcal{M}_{[X_{S}/\GL_{n}]} \simeq \Bun_{\GL_{n},S}. \]
We can then find explicit cohomologically smooth charts for the moduli space $\Bun_{\GL_{n},S}$ which (up to quotients by explicit group actions) can be written as spaces of sections of a suitable quasi-projective varieties $\tilde{Z}_{i}$ over $X_{S}$ (See Lemma \ref{lemma: BunGLnCharts}). By pulling back these charts along the map $\mathcal{M}_{[Z/\GL_{n}]} \rightarrow \mathcal{M}_{[X_{S}/\GL_{n}]} \simeq \Bun_{\GL_{n},S}$, we can find charts for $\mathcal{M}_{[Z/\GL_{n}]}$ which are given as spaces of sections of a fiber product $\tilde{Z}_{i} \times_{[X_{S}/\GL_{n}]} [Z/\GL_{n}]$, which will also be representable by a scheme smooth quasi-projective over $X_{S}$. The condition on the pullback of the tangent bundle of this space needed to apply Theorem \ref{thm: jacobiancriterion} can be related to the above condition on the tangent complex of $[Z/\GL_{n}] \ra X_{S}$ via looking at the distinguished triangle of tangent complexes given by the projection $\tilde{Z}_{i} \times_{[X_{S}/\GL_{n}]} [Z/\GL_{n}] \rightarrow [Z/\GL_{n}]$. This will in turn give the desired claim. 

As an application of these ideas, we can give a more direct proof that $\Bun_{G}$ is a cohomologically smooth Artin $v$-stack of $\ell$-dimension $0$, as well as show that the moduli stack $\Bun_{P}$ for $P \subset G$ a parabolic is cohomologically smooth. Perhaps more interestingly however, we will verify that, if $G = \GL_{2}$ and $B$ denotes the Borel then Drinfeld's compactification $\ol{\Bun}_{B}$ is $\ell$-cohomologically smooth. This result plays an important role in the theory of geometric Eisenstein series on the Fargues-Fontaine curve and leads to many simplifications in the theory for groups of type $A_{1}$. We will explore this more in future work. 

In \S $2$, we review the theory of the tangent complex for usual Artin stacks. Section $3$ is a review of the theory of Banach-Colmez spaces, where we also prove various properties of "stacky" Banach-Colmez like spaces obtained from it, as described in Remark \ref{rem: Picardvgroupoidlocalmodel}. In \S $4$, we conclude by giving the proof of Theorem \ref{thm: stackyjacobi} and giving some applications to verifying the cohomological smoothness of various moduli stacks appearing in the Fargues-Scholze geometric Langlands correspondence; in particular, we will show smoothness of Drinfeld's compactification in the case of a Borel inside $\GL_{2}$. 
\section{The Tangent Complex of an Artin Stack}
\subsection{Review of the Tangent Complex}{\label{subsec: ReviewofTangentComplex}}
In this section, we will review the theory of the tangent complex of an Artin stack. Most of this material in the context of schemes can be found in the book of Ilusie \cite{Il}, and for Artin stacks in the book of Laumon-Moret--Baily \cite{LMB}. We also recommend the interested reader take a look at the paper of Olsson \cite{Ols}, where several mistakes in the book of \cite{LMB} related to the functoriality of the lisse-\'etale site and the derived pull-back between maps of Artin stacks are remedied. 

Let's start with some motivation. Let $S$ be a scheme (affine for simplicity) and $X \rightarrow S$ an Artin $S$-stack. Let $D = \Spec{\mathbb{Z}[\epsilon]/\epsilon^{2}}$ be the scheme attached to the ring of dual numbers. Write $S[\epsilon]/\epsilon^{2} := S \times_{\Spec{\mathbb{Z}}} D$ for the dual numbers over $S$. Given a point $x: S \rightarrow X$, we can define a tangent vector of $X$ at 
$x$ to be a lift of the map $x$ to a map $S[\epsilon]/\epsilon^{2} \rightarrow X$. Let $T_{X/S,x}$ be the set of tangent vectors. This comes equipped with several kinds of additional structure. For starters, the elements of $T_{X/S,x}$ are the objects of a groupoid, since they can be realized as a subgroupoid of $X(S[\epsilon]/\epsilon^{2})$ given as the fiber over $x$ of the natural map $X(S[\epsilon]/\epsilon^{2}) \rightarrow X(S)$. Moreover, it has an addition given by $S[\epsilon]/\epsilon^{2} \rightarrow S[\epsilon]/\epsilon^{2} \coprod_{x} S[\epsilon]/\epsilon^{2}$ defined by the map of rings sending $a + b\epsilon_{1} + c\epsilon_{2}$ to $a + (b + c)\epsilon$, where $a,b,c \in \mathcal{O}_{S}$. This gives $T_{X/S,x}$ the structure of a Picard category. In other words, a symmetric monoidal category such that tensoring by a fixed object has an inverse (See \cite[Section~14.4]{LMB} for a more precise definition). Moreover, it has a natural scalar multiplication by $\lambda \in \mathcal{O}_{S}$, given by the map sending $a + b\epsilon \mapsto a + \lambda b\epsilon$. This induces an endomorphism $\lambda: T_{X/S,x} \rightarrow T_{X/S,x}$ satisfying natural compatibilities. This gives it the structure of a Picard $S$-stack, as in \cite[Definition~14.4.2]{LMB}. Conversely, suppose we are given a length one complex of $V^{-1} \ra V^{0}$ of sheaves of abelian groups for the \'etale topology on $S$. Attached to such a complex, one can consider the category whose objects are given by sections of $V^{0}$ and where a morphism from $x$ to $y$ is given by an element $f \in V^{-1}$ such that $df = y - x$. Note, in particular, that $H^{0}$ of this complex gives the isomorphism classes of objects in our category and $H^{-1}$ gives the automorphisms of the unit object. Quasi-isomorphisms of complexes give rise to equivalences of the associated categories. Therefore, such categories are equivalent to objects in the derived category of sheaves of abelian groups on $(S)_{\text{\'etale}}$ concentrated in two successive cohomological degrees (See \cite[Theorem~14.4.5]{LMB}). Let $V^{*}$ be such a complex associated to $T_{X/S,x}$. The above analysis suggests if $X$ is smooth that, infinitesimally around $x$, the stack $X$ should look like the stack quotient $[H^{0}(V^{*})/H^{-1}(V^{*})] \rightarrow S$. More transparently, the complex $V^{*}$ should arise by applying the Grothendieck construction (\cite[Section~14.2.6]{LMB}) to a two term complex of quasi-coherent $\mathcal{O}_{S}$-modules $C^{*}$, as is done in \cite[Example~14.4.10]{LMB} and this will be the form that the tangent complex takes.

This leads us to the expectation that there should exists an object $T_{X/S}^{*}$ in the derived category of quasi-coherent sheaves on $X$, whose pullback to $x$ gives rise to a complex whose Picard groupoid is precisely $T_{X/S,x}$ if $X$ is smooth. In the general case, we should expect its higher cohomology to carry information about obstructions to higher order deformations. 

In preparation for constructing this object, let us briefly recall the definition of quasi-coherent sheaves on an Artin $S$-stack $X$. We have the following key definition of Laumon-Moret--Bailly.
\begin{definition}{\cite[Definition~12.1]{LMB}\footnote{The definition we give is actually a slight variant of \cite[Definition~12.1]{LMB}, but it is easy to see that they give rise to equivalent sheaf theories.}}
For $X \rightarrow S$ an Artin $S$-stack. We define the lisse-\'etale site of $X$, denoted $\text{Lis-\'et}(X)$. The objects are $(U,u)$, for $U$ a scheme over $S$ and $u: U \rightarrow X$ is a smooth morphism over $S$. The morphisms between a pair $(U_{1},u_{1})$ and $(U_{2},u_{2})$ are commutative diagrams 
\[
\begin{tikzcd}
U_{1} \arrow[dd,"\phi"] \arrow[dr,"u_{1}"] & & \\
& X &  \\
U_{2} \arrow[ur,"u_{2}"] & & 
\end{tikzcd}
\]
defined by a choice of natural equivalence $\alpha: u_{1} \simeq u_{2} \circ \phi$. Coverings are families of jointly surjective \'etale morphisms $\{U_{i} \rightarrow U\}_{i \in I}$.
\end{definition}
\begin{remark}{\label{rem: quasicoh}}
It is easily verified \cite[Proposition~12.2.1]{LMB} (See also \cite[Remark~3.2]{Ols}) that a sheaf $\mathcal{F}$ on this site is equivalent to a system of sheaves $\mathcal{F}_{U}$ on $(U)_{\text{\'et}}$ and, for every morphism $\{\phi: U \rightarrow V\} \in \text{Lis-\'et}(X)$, a map 
\[ \theta_{\phi}: \phi^{-1}(\mathcal{F}_{V}) \rightarrow \mathcal{F}_{U} \]
satisfying the following properties:
\begin{enumerate}
    \item $\theta_{\phi}$ is an isomorphism if $\phi$ is \'etale. 
    \item For maps $U \xrightarrow{\phi} V \xrightarrow{\psi} W$, we have $\theta_{\phi} \circ \phi^{-1}(\theta_{\psi}) = \theta_{\psi \circ \phi}$.
\end{enumerate}
\end{remark}
This site has a structure sheaf, denoted $\mathcal{O}_{X_{\text{lis-\'et}}}$, sending $U \in \text{Lis-\'et}(X)$ to $\Gamma(U,\mathcal{O}_{U})$. We can therefore consider $\mathcal{O}_{X_{\text{lis-\'et}}}$-modules on $X$. This allows us to define the following.
\begin{definition}{\cite[Definition~13.2.2]{LMB}}
We say a $\mathcal{O}_{X_{\text{lis-\'et}}}$-module $\mathcal{F}$ on $X$ is a quasi-coherent sheaf if, for all $U \in \text{Lis-\'et}(X)$, $\mathcal{F}_{U}$ is a quasi-coherent sheaf on $(U)_{\text{\'et}}$ and, for all morphisms $\phi: U \rightarrow V$, the map $\phi^{*}(\mathcal{F}_{V}) \rightarrow \mathcal{F}_{U}$ induced by $\theta_{\phi}$ is an isomorphism. It follows, by \cite[Lemma~6.2]{Ols}, that this is equivalent to checking this condition for smooth maps $\phi: U \rightarrow V$. Similarly, we say such a $\mathcal{F}$ is a vector bundle on $X$ if all the restrictions $\mathcal{F}_{U}$ are vector bundles on $(U)_{\text{\'et}}$.
\end{definition}
We let $\mathrm{QCoh}(X)$ denote the category of quasi-coherent sheaves on $X$. From here, we can consider $\D_{\mathrm{qcoh}}(X,\mathcal{O}_{X})$, the full derived sub-category of the derived category of $\mathcal{O}_{X_{\text{lis-\'et}}}$-modules with quasi-coherent cohomology, which has the structure of a triangulated category by \cite[Lemma~13.1.3]{LMB}. We then define $\D^{+}_{\mathrm{qcoh}}(X,\mathcal{O}_{X})$ (resp. $\D^{-}_{\mathrm{qcoh}}(X,\mathcal{O}_{X})$) to be the sub-category with bounded below (resp. above) cohomology. We note, by \cite[Proposition~13.2.6]{LMB}, we have a natural functor
\[ R\mathcal{H}om_{\mathcal{O}_{X}}(-,-): \D^{+,\mathrm{op}}_{\mathrm{qcoh}}(X,\mathcal{O}_{X}) \times \D^{-}_{\mathrm{qcoh}}(X,\mathcal{O}_{X}) \rightarrow \D^{-}_{\mathrm{qcoh}}(X,\mathcal{O}_{X}). \]
Now, we will turn to defining the derived pullback for a map of Artin $S$-stacks. Here, one needs to be a bit careful that, given a morphism $f: X \rightarrow Y$ of algebraic stacks, there is an induced functor
\[ \text{Lis-\'et}(Y) \rightarrow \text{Lis-\'et}(X) \]
\[ U \mapsto U \times_{Y} X \]
which induces a pair of adjoint functors $(f^{-1},f_{*})$ on sheaves, but this does not give rise to a morphism of topoi because $f^{-1}$ is not left exact (See \cite[Example~3.4]{Ols}).
Nonetheless, we can still define a left adjoint $Lf^{*}$ to $Rf_{*}$, the pushforward in the derived category of quasi-coherent sheaves, at least after taking truncations (See \cite[Section~7]{Ols}). This leads us to consider $\D'_{\mathrm{qcoh}}(X,\mathcal{O}_{X})$ the left-completion of $\D_{\mathrm{qcoh}}(X,\mathcal{O}_{X})$. Namely, the category of systems 
\[ K = \{\cdots \rightarrow K_{\geq -n - 1} \rightarrow K_{\geq - n} \rightarrow \cdots \rightarrow K_{\geq 0} \} \]
where $K_{\geq -n} \in \D^{+}_{\mathrm{qcoh}}(X,\mathcal{O}_{X})$ and the maps
\[ K_{\geq -n} \rightarrow \tau_{\geq -n}K_{\geq -n}  \] 
\[  \tau_{\geq -n}K_{\geq -n - 1} \rightarrow \tau_{\geq -n}K_{\geq -n}  \]
are isomorphisms, where $\tau_{\geq -n}$ denote the usual truncation functors in degree $\geq -n$. This comes equipped with a notion of distinguished triangle, shift maps, and cohomology functors, as defined in \cite[Section~7]{Ols}. With this in hand, we can define the following. 
\begin{proposition}{\cite[Section~7]{Ols}}{\label{lem: fulltancomp}}
Let $f: X \rightarrow Y$ be a quasi-compact and quasi-separated map of Artin stacks. Then there exists a derived pullback functor 
\[ Lf^{*}: \D'_{\mathrm{qcoh}}(Y,\mathcal{O}_{Y}) \rightarrow \D'_{\mathrm{qcoh}}(X,\mathcal{O}_{X}) \]
obtained by taking the system $\{\tau_{\geq a}Lf^{*}\}$, where $\tau_{\geq a}Lf^{*}$ is the left adjoint to the functor $Rf_{*}: \D^{[a,\infty)}_{\mathrm{qcoh}}(X,\mathcal{O}_{X}) \rightarrow \D^{[a,\infty)}_{\mathrm{qcoh}}(Y,\mathcal{O}_{Y})$ on the derived sub-categories with cohomology concentrated in degrees $[a,\infty)$.
\end{proposition}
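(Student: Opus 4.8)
The plan is to follow \cite[Section~7]{Ols}: one builds $Lf^{*}$ one truncation at a time and then assembles the truncated pullbacks inside the left completion $\D'_{\mathrm{qcoh}}$. The point of this detour is that on Artin stacks one cannot simply left-derive the (non-exact) functor $f^{-1}$ on lisse-\'etale modules, because $Rf_{*}$ has unbounded cohomological amplitude --- already $R\Gamma(B\mathbb{G}_{m},\mathcal{O})$ is unbounded above --- and does not commute with filtered colimits, so there is no well-behaved $Lf^{*}$ on the full unbounded $\D_{\mathrm{qcoh}}$. As a first step I would check that for every $a \in \mathbb{Z}$ the functor $Rf_{*}$ restricts to $Rf_{*}\colon \D^{[a,\infty)}_{\mathrm{qcoh}}(X,\mathcal{O}_{X}) \to \D^{[a,\infty)}_{\mathrm{qcoh}}(Y,\mathcal{O}_{Y})$: left-exactness of $f_{*}$ on the lisse-\'etale topos keeps cohomology in degrees $\geq a$, and the only real content is that the cohomology sheaves of $Rf_{*}K$ remain quasi-coherent, which is where quasi-compactness and quasi-separatedness of $f$ enter. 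I would verify this smooth-locally on the target, reducing via the comparison of lisse-\'etale cohomology with \v{C}ech/simplicial cohomology computed from a smooth atlas (as in \cite[Section~6]{Ols}) to the classical statement for quasi-compact quasi-separated morphisms of schemes and algebraic spaces.

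Next, for each fixed $a$, I would produce $\tau_{\geq a}Lf^{*}$ as a left adjoint of $Rf_{*}\colon \D^{[a,\infty)}_{\mathrm{qcoh}}(X,\mathcal{O}_{X}) \to \D^{[a,\infty)}_{\mathrm{qcoh}}(Y,\mathcal{O}_{Y})$. The concrete route, and the one taken in \cite{Ols}, is smooth descent: choose smooth hypercovers $X_{\bullet} \to X$ and $Y_{\bullet} \to Y$ by schemes compatible with $f$ --- arranged by a standard bootstrap from a smooth atlas, using representability and quasi-separatedness of the diagonals --- use the cohomological descent equivalence $\D^{[a,\infty)}_{\mathrm{qcoh}}(X) \simeq \lim_{n}\D^{[a,\infty)}_{\mathrm{qcoh}}(X_{n})$, and glue the classical derived pullbacks of Illusie \cite{Il}, truncated via $\tau_{\geq a}$; here one uses that the \v{C}ech face maps are smooth, hence commute with $\tau_{\geq a}$, so that only the possibly non-flat ``level'' maps $X_{n} \to Y_{n}$ require the truncation and the cocycle condition is preserved. (Since $Rf_{*}$ is left exact and commutes with arbitrary products, a Brown-representability argument adapted to the bounded-below setting gives an alternative construction.)

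Finally I would assemble the system. For $a \leq b$ and $K \in \D^{[b,\infty)}_{\mathrm{qcoh}}(Y) \subset \D^{[a,\infty)}_{\mathrm{qcoh}}(Y)$ there is a canonical isomorphism $\tau_{\geq b}\bigl(\tau_{\geq a}Lf^{*}K\bigr) \xrightarrow{\ \sim\ } \tau_{\geq b}Lf^{*}K$, since for $M \in \D^{[b,\infty)}_{\mathrm{qcoh}}(X)$ one computes
\[
\Hom\bigl(\tau_{\geq b}(\tau_{\geq a}Lf^{*}K),M\bigr) = \Hom\bigl(\tau_{\geq a}Lf^{*}K,M\bigr) = \Hom\bigl(K, Rf_{*}M\bigr) = \Hom\bigl(\tau_{\geq b}Lf^{*}K,M\bigr),
\]
using that $\tau_{\geq b}$ is left adjoint to the inclusion $\D^{[b,\infty)}_{\mathrm{qcoh}}(X) \hookrightarrow \D^{[a,\infty)}_{\mathrm{qcoh}}(X)$ and that $Rf_{*}M \in \D^{[b,\infty)}_{\mathrm{qcoh}}(Y)$. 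Given $K = \{K_{\geq -n}\}_{n} \in \D'_{\mathrm{qcoh}}(Y,\mathcal{O}_{Y})$ I would then set $(Lf^{*}K)_{\geq -n} := \tau_{\geq -n}Lf^{*}(K_{\geq -n})$; the displayed compatibility makes the structure maps of the resulting system isomorphisms, so $Lf^{*}K \in \D'_{\mathrm{qcoh}}(X,\mathcal{O}_{X})$, and functoriality in $K$ together with the asserted adjunction to $Rf_{*}$ on bounded-below objects follow from the corresponding facts for the truncations.

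I expect the main obstacle to be the first step --- the stability of quasi-coherence under $Rf_{*}$ on the lisse-\'etale site, which is precisely the place where \cite{Ols} repairs the treatment in \cite{LMB} --- together with the bookkeeping of the last step: because $Rf_{*}$ is neither of bounded cohomological amplitude nor colimit-preserving on Artin stacks, one genuinely cannot work on the naive unbounded derived category and must route everything through the family of truncated adjoints and the left completion.
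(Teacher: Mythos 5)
The paper does not prove this statement; it is cited verbatim from \cite[Section~7]{Ols}, so there is no in-text argument to compare against. Your outline is a faithful reconstruction of Olsson's construction: the reduction to checking that $Rf_{*}$ preserves $\D^{[a,\infty)}_{\mathrm{qcoh}}$ (where quasi-compactness and quasi-separatedness enter), the construction of each truncated left adjoint via cohomological descent along smooth simplicial resolutions, and the Yoneda computation showing $\tau_{\geq b}(\tau_{\geq a}Lf^{*}K)\simeq\tau_{\geq b}Lf^{*}K$ so that the truncated pullbacks assemble into an object of the left completion. That last computation is correct, and it is exactly the compatibility needed.

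One small factual slip: $R\Gamma(B\mathbb{G}_{m},\mathcal{O})$ is \emph{not} unbounded --- $\mathbb{G}_{m}$ is a torus, hence linearly reductive, so quasi-coherent cohomology of $B\mathbb{G}_{m}$ is concentrated in degree $0$. (You may be conflating this with the $\ell$-adic cohomology of $B\mathbb{G}_{m}$, which is a polynomial algebra.) The standard example of a quasi-compact quasi-separated morphism of Artin stacks for which $Rf_{*}$ has unbounded cohomological amplitude is $B\mathbb{G}_{a}\to\Spec\mathbb{F}_{p}$, or more generally $BG$ for $G$ a unipotent or non-linearly-reductive group scheme in positive characteristic. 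This does not affect the structure of your argument --- the need for the truncated construction is genuine --- but the illustrative example should be corrected.
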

With these definitions out of the way, we have our key result on the tangent complex\footnote{We warn the reader that, for our purposes, it will be more convenient to work with the tangent complex, while Olsson and Laumon-Moret--Bailly work with the cotangent complex. However, the claims we will make can be obtained from their work by applying  $R\mathcal{H}om(-,\mathcal{O}_{X})$, as defined above.}.
\begin{theorem}{\cite[Theorem~8.1]{Ols}, \cite[Theorem~17.3]{LMB}}{\label{thm: tangcompl}}
Let $f: X \rightarrow Y$ be a finitely presented morphism of Artin stacks. Then to $f$ one can associate an object $T^{*}_{X/Y} \in \D_{\mathrm{qcoh}}^{'}(X,\mathcal{O}_{X})$ called the tangent complex of $f$ satisfying the following properties:
\begin{enumerate}
    \item For any $2$-commutative square of Artin stacks  
   \[ \begin{tikzcd}
&  X' \arrow[r,"A"] \arrow[d,"f'"] \arrow[d] & X \arrow[d,
"f"] \\
& Y' \arrow[r,"B"] & Y
\end{tikzcd} \]
such that $A$ is flat of locally finite presentation. There is a natural functorality morphism
\[ T^{*}_{X'/Y'} \rightarrow LA^{*}T^{*}_{X/Y} \]
which is an isomorphism if the square is Cartesian and either $f$ or $B$ is flat. We will usually consider this map when $Y = Y'$ and $B = id_{Y}$, in which case we will denote it by $dA_{Y}$. 
    \item If $g: Y \rightarrow Z$ is a morphism of Artin stacks locally of finite presentation and $f$ is flat then there exists a distinguished triangle 
    \[ \begin{tikzcd}
& & Lf^{*}T^{*}_{Y/Z}\arrow[dl,"+1"] & \\
& T^{*}_{X/Y} \arrow[rr] &  & T^{*}_{X/Z} \arrow[ul,"d\ol{f}"]
\end{tikzcd} \]
in $\D'_{\mathrm{qcoh}}(X,\mathcal{O}_{X})$. 
\end{enumerate}
\end{theorem}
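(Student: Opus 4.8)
The plan is to obtain the whole statement by duality from the corresponding facts about the \emph{co}tangent complex $L_{X/Y}$: following the footnote's prescription one sets $T^{*}_{X/Y} := R\mathcal{H}om_{\mathcal{O}_{X}}(L_{X/Y},\mathcal{O}_{X})$, using the internal Hom of \cite[Proposition~13.2.6]{LMB} recalled above, so that properties (1) and (2) reduce to the analogous functoriality morphism and transitivity triangle for $L_{X/Y}$ followed by an application of $R\mathcal{H}om_{\mathcal{O}_{X}}(-,\mathcal{O}_{X})$. Thus the real work is the construction of $L_{X/Y}$ for quasi-compact quasi-separated morphisms of Artin stacks together with base change and transitivity, which is \cite{Il} for schemes and \cite[Section~17]{LMB}, \cite[Section~8]{Ols} in general.

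First I would recall Illusie's cotangent complex for a morphism of schemes $f : X \to Y$: affine-locally it is $\Omega^{1}_{P_{\bullet}/\mathcal{O}_{Y}} \otimes_{P_{\bullet}} \mathcal{O}_{X}$ for a simplicial free resolution $P_{\bullet} \to \mathcal{O}_{X}$ of $f^{-1}\mathcal{O}_{Y}$-algebras, and these glue to an object of $\D^{\leq 0}_{\mathrm{qcoh}}(X,\mathcal{O}_{X})$. Since simplicial resolutions are functorial up to homotopy and are preserved by flat pullback, one gets for free the functoriality map $LA^{*}L_{X/Y} \to L_{X'/Y'}$ attached to a $2$-commutative square, the fact that it is an isomorphism when the square is Cartesian and $f$ or $B$ is flat, and the transitivity triangle $Lf^{*}L_{Y/Z} \to L_{X/Z} \to L_{X/Y}$ for a composite $X \to Y \to Z$.

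Next I would propagate this to algebraic spaces by \'etale descent (\'etale morphisms have vanishing relative cotangent complex, so the transitivity triangle forces $L$ to be \'etale-local), and then to Artin stacks by smooth descent. Here the point to be careful about, and the one fixed by \cite{Ols}, is that for a smooth atlas $\pi : U \to X$ with $U$ a scheme the relative cotangent complex $L_{U/X} \simeq \Omega^{1}_{U/X}$ is a vector bundle in degree $0$, so the triangle $\pi^{*}L_{X/Y} \to L_{U/Y} \to L_{U/X}$ pins down $\pi^{*}L_{X/Y}$, and one glues this data over the smooth groupoid $U \times_{X} U \rightrightarrows U$. Because the functors $Lf^{*}$ along the simplicial atlas exist only after truncation ($f^{-1}$ being not left exact), the gluing must be performed degreewise and assembled in the left-completed category $\D'_{\mathrm{qcoh}}(X,\mathcal{O}_{X})$ as the system $\{\tau_{\geq a}L_{X/Y}\}$ of Proposition \ref{lem: fulltancomp}. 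The resulting $L_{X/Y}$ sits in degrees $\leq 1$ (the extra $+1$ being the contribution of $L_{U/X}[-1]$), so $T^{*}_{X/Y}$ sits in degrees $\geq -1$, matching the shape asserted in the introduction.

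Finally the two properties transfer. The functoriality morphism for a $2$-commutative square and its base-change property follow from the corresponding statements for $L$ — the isomorphism claim reducing, after pulling back along atlases, to flat base change for schemes — and then from applying $R\mathcal{H}om_{\mathcal{O}_{X}}(-,\mathcal{O}_{X})$, which sends $LA^{*}L_{X/Y} \to L_{X'/Y'}$ to $T^{*}_{X'/Y'} \to LA^{*}T^{*}_{X/Y}$ via the standard compatibility of this functor with $LA^{*}$ on the (pseudo-coherent) complexes occurring here; dualizing the transitivity triangle $Lf^{*}L_{Y/Z} \to L_{X/Z} \to L_{X/Y}$ likewise yields the triangle $T^{*}_{X/Y} \to T^{*}_{X/Z} \to Lf^{*}T^{*}_{Y/Z}$, with connecting map $df_{Z}$. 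I expect the main obstacle to be not any single computation but the bookkeeping imposed by the absence of an honest $Lf^{*}$ for Artin stacks: one must check that the functoriality map, the transitivity triangle, and the $R\mathcal{H}om$-duality are all compatible with the truncations defining $\D'_{\mathrm{qcoh}}$ — precisely the place where \cite{LMB} required the corrections of \cite{Ols}.
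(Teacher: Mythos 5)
Your proposal is correct and follows essentially the same route the paper itself sketches: both start from Illusie's cotangent complex for schemes, build the stack-level object by gluing over a smooth atlas $U \to X$ via the transitivity triangle $\pi^*L_{X/Y} \to L_{U/Y} \to L_{U/X}$ (equivalently, the paper's two-term complex $\{T^*_{U/X} \to T^*_{U/S}\}$ with $T^*_{U/X} := \triangle^*_{U/X}T^*_{R/U}$), keep track of truncations in the left-completed category $\D'_{\mathrm{qcoh}}$ per Olsson, and then obtain the tangent complex by applying $R\mathcal{H}om_{\mathcal{O}_X}(-,\mathcal{O}_X)$ exactly as the paper's footnote prescribes. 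The only thing worth flagging is that your last paragraph quietly uses the base-change map $LA^*R\mathcal{H}om(L_{X/Y},\mathcal{O}_X) \to R\mathcal{H}om(LA^*L_{X/Y},\mathcal{O}_{X'})$ in the \emph{invertible} direction to turn $T^*_{X'/Y'} \to R\mathcal{H}om(LA^*L_{X/Y},\mathcal{O}_{X'})$ into $T^*_{X'/Y'} \to LA^*T^*_{X/Y}$; this is automatic when $L_{X/Y}$ is perfect (e.g.\ the smooth case used in the paper's applications), but for a general quasi-compact quasi-separated morphism one should either impose pseudo-coherence plus bounded Tor-amplitude, or construct $df_Y$ directly on atlases as the paper's sketch does, which sidesteps the issue.
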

\begin{remark}{\label{rem: contangent to tangent}}
We note that the presence of finite presentation assumptions in the above Theorem appeared when dualizing the results of \cite{Ols} and \cite{LMB} to pass from the cotangent complex to the tangent complex, where in \emph{loc.cit} it is usually just assumed just that the morphisms are qcqs. For example, when formally deducing the distinguished triangle in (2) stated above, one is tasked with producing an identification 
\[ Lf^{*}R\mathcal{H}om_{\mathcal{O}_{Y}}(\mathbb{L}_{Y/Z},\mathcal{O}_{Y}) \simeq R\mathcal{H}om_{\mathcal{O}_{X}}(Lf^{*}(\mathbb{L}_{Y/Z}),Lf^{*}\mathcal{O}_{Y}) \simeq R\mathcal{H}om(Lf^{*}(\mathbb{L}_{Y/Z}),\mathcal{O}_{X}), \]
and for the first isomorphism one needs to invoke the assumption on $f$ that it is flat and of locally finite presentation (See \cite[Tag~0A6A (iii)]{Stacks}), which is why this assumption appeared in the above statement. Here $\mathbb{L}_{Y/Z}$ denotes the cotangent complex of the map $g$. As we will only be considering this distinguished triangle for smooth morphisms of Artin stacks, this will be sufficient for our purpose.
\end{remark}
Let us give some flavor for how to prove this theorem in the particular case of the structure morphism of an Artin $S$-stack $X \rightarrow S$ for a scheme $S$, describing its pullback to an atlas and showing that it satisfies the cocycle condition, as in Remark \ref{rem: quasicoh}. For a morphism of schemes, $Y \rightarrow Z$, we let $L^{*}_{Y/Z}$ denote the cotangent complex, as defined in \cite[Section~II.1]{Il}. This is a complex of quasi-coherent sheaves on $Y$ with cohomology concentrated in degrees $(\infty,0]$. For us, it will be more convenient to work with the dual notion. In particular, we define the tangent complex $T^{*}_{Y/Z} := R\mathcal{H}om_{\mathcal{O}_{Y}}(L^{*}_{Y/Z},\mathcal{O}_{Y})$, which will in turn be a complex of quasi-coherent sheaves concentrated in degrees $[0,\infty)$ on $X$. The analogue of Theorem \ref{thm: tangcompl} in the case of regular schemes follows from the work of Illusie \cite[Section~II.2]{Il}, and we will assume this in what follows. Choose an atlas $f: U \rightarrow X$ in the smooth topology with associated equivalence relation given by $R := U \times_{X} U$. Let $\triangle_{U/X}: U \rightarrow R$ denote the diagonal homomorphism. This is a section of the natural projection
\[ f': R \rightarrow U \]
obtained as the base change of $f$. Using this, we set $T^{*}_{U/X} := \triangle_{U/X}^*(T^{*}_{R/U})$. From this, we construct the following object in the derived category of quasi-coherent sheaves on $U$
\[  \mathrm{Cofiber}(T^{*}_{U/X} \rightarrow T^{*}_{U/S}) =: (T^{*}_{X/S})_{U}, \]
which is the pullback by $\triangle_{U/X}^{*}$ of 
\[ \Cofiber(T^{*}_{R/U} \rightarrow (f')^{*}(T^{*}_{U/S}))   \]
coming from the distinguished triangle associated to the map of $S$-schemes $f': R \rightarrow U$, as in Theorem \ref{thm: tangcompl} (2). 

Now, let us check that this description has the expected quasi-isomorphisms for maps between charts, as in Remark \ref{rem: quasicoh}. Namely, we want to show that, if we have an $S$-scheme $V \rightarrow X$ mapping smoothly to $X$ and a map $\phi: U \rightarrow V$ in the smooth topology, there is a natural map $(T^{*}_{X/S})_{U} \rightarrow \phi^{*}(T^{*}_{X/S})_{V}$ which is a quasi-isomorphism and that these satisfy a cocycle condition. To accomplish this, we note one easily sees that, using the distinguished triangles of Theorem \ref{thm: tangcompl} (2) in the case of schemes, we have a distinguished triangle
\[ \begin{tikzcd}
& & \phi^{*}T^{*}_{V/X}\arrow[dl,"+1"] & \\
& T^{*}_{U/V} \arrow[rr] &  & T^{*}_{U/X} \arrow[ul,"d\phi_{X}"]
\end{tikzcd} \]
for the above definition of $T^{*}_{U/X}$ and $T^{*}_{V/X}$. Moreover, we have a map of triangles
\[ \begin{tikzcd}
& & \phi^{*}T^{*}_{V/X}\arrow[dl,"+1"] \arrow[dd] & \\
& T^{*}_{U/V} \arrow[rr] \arrow[dd,"id"] &  & T^{*}_{U/X} \arrow[ul,"d\phi_{X}"] \arrow[dd] \\
& & \phi^{*}T^{*}_{V/S}\arrow[dl,"+1"] & \\
& T^{*}_{U/V} \arrow[rr]  &  & T^{*}_{U/S} \arrow[ul,"d\phi_{S}"]
\end{tikzcd} \]
From this, it follows that the natural commutative diagram
\[ \begin{tikzcd}
&  T^{*}_{U/X} \arrow[r] \arrow[d] & T^{*}_{U/S} \arrow[d] \\
& \phi^{*}T^{*}_{V/X} \arrow[r] & \phi^{*}T^{*}_{V/S}
\end{tikzcd} \]
induced by $\phi$ gives rise to the desired quasi-isomorphism: $(T^{*}_{X/S})_{U} \xrightarrow{\simeq} \phi^{*}(T^{*}_{X/S})_{V}$ by taking cofibers. One can then check that these isomorphism satisfy the cocycle condition. This suggests (See \cite{Ols} for more details) that we get a well-defined object $T_{X/S}^{*} \in \D'_{\mathrm{qcoh}}(X,\mathcal{O}_{X})$ sitting in cohomological degrees $[-1,\infty)$, and that this defines the object in $\D'_{\mathrm{qcoh}}(X,\mathcal{O}_{X})$ described in Theorem \ref{thm: tangcompl}. 

Let us now work out this theory more explicitly, and derive some useful consequences for later. In particular, we recall that classically the tangent/cotangent complex controls smoothness of a morphism of schemes.
\begin{proposition}{\cite[Chapter III, Proposition~3.1.2]{Il}}{\label{prop: jacclassic}}
Let $f: X \rightarrow Y$ be a locally finitely presented morphism and $T_{X/Y}$ be the usual tangent sheaf of $X$ over $Y$. There is a natural map of complexes
\[ T_{X/Y}[0] \rightarrow T^{*}_{X/Y}, \]
which induces an isomorphism on cohomology in degree $0$. Then $f$ is smooth if and only if this map is a quasi-isomorphism and $T_{X/Y}$ is locally free. 
\end{proposition}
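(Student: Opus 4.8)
The plan is to obtain this statement as the $R\mathcal{H}om_{\mathcal{O}_X}(-,\mathcal{O}_X)$-dual of Illusie's cotangent-complex criterion for smoothness. First I would construct the map. Since $L^{*}_{X/Y}$ has cohomology concentrated in degrees $(-\infty,0]$ with $H^{0}(L^{*}_{X/Y})=\Omega^{1}_{X/Y}$, there is the canonical truncation morphism $L^{*}_{X/Y}\rightarrow \tau_{\geq 0}L^{*}_{X/Y}\simeq \Omega^{1}_{X/Y}[0]$. Applying $R\mathcal{H}om_{\mathcal{O}_X}(-,\mathcal{O}_X)$ gives $R\mathcal{H}om_{\mathcal{O}_X}(\Omega^{1}_{X/Y}[0],\mathcal{O}_X)\rightarrow T^{*}_{X/Y}$, and precomposing with the edge map $T_{X/Y}[0]=\mathcal{H}om_{\mathcal{O}_X}(\Omega^{1}_{X/Y},\mathcal{O}_X)[0]\rightarrow R\mathcal{H}om_{\mathcal{O}_X}(\Omega^{1}_{X/Y}[0],\mathcal{O}_X)$ produces the asserted natural map $T_{X/Y}[0]\rightarrow T^{*}_{X/Y}$. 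That it is an isomorphism on $H^{0}$ is immediate from the hyper-$\mathrm{Ext}$ spectral sequence $E_{2}^{p,q}=\mathcal{E}xt^{p}(H^{-q}(L^{*}_{X/Y}),\mathcal{O}_X)\Rightarrow H^{p+q}(T^{*}_{X/Y})$: because $L^{*}_{X/Y}$ lives in non-positive degrees, only the corner $E_{2}^{0,0}$ contributes in total degree $0$, so $H^{0}(T^{*}_{X/Y})=\mathcal{H}om_{\mathcal{O}_X}(\Omega^{1}_{X/Y},\mathcal{O}_X)=T_{X/Y}$, compatibly with the map just built.

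For the forward implication I would invoke the classical fact (Illusie \cite{Il}) that a locally finitely presented morphism $f$ is smooth if and only if $L^{*}_{X/Y}$ is perfect of Tor-amplitude contained in $[0,0]$; equivalently, the truncation map $L^{*}_{X/Y}\rightarrow \Omega^{1}_{X/Y}[0]$ is a quasi-isomorphism and $\Omega^{1}_{X/Y}$ is locally free of finite rank. Granting this, if $f$ is smooth then $T^{*}_{X/Y}=R\mathcal{H}om_{\mathcal{O}_X}(\Omega^{1}_{X/Y}[0],\mathcal{O}_X)\simeq \mathcal{H}om_{\mathcal{O}_X}(\Omega^{1}_{X/Y},\mathcal{O}_X)[0]=T_{X/Y}[0]$, the sheaf $T_{X/Y}$ is locally free as the $\mathcal{O}_X$-dual of a finite locally free sheaf, and one checks that this quasi-isomorphism is exactly the natural map above.

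For the converse, suppose the natural map is a quasi-isomorphism and $T_{X/Y}$ is locally free; the goal is to run the previous computation in reverse and conclude $L^{*}_{X/Y}\simeq \Omega^{1}_{X/Y}[0]$ with $\Omega^{1}_{X/Y}$ finite locally free, after which Illusie's criterion gives smoothness. The hard part will be that the biduality map $L^{*}_{X/Y}\rightarrow R\mathcal{H}om_{\mathcal{O}_X}(T^{*}_{X/Y},\mathcal{O}_X)$ is an isomorphism only once $L^{*}_{X/Y}$ is known to be perfect, which is not yet in hand. To circumvent this I would reduce to the Noetherian local case by the usual limit/localization arguments — every lfp morphism is, locally on source and target, a base change of one over a Noetherian ring — where $L^{*}_{X/Y}$ is pseudo-coherent and bounded above; the hypothesis forces $\mathcal{E}xt^{i}_{\mathcal{O}_X}(L^{*}_{X/Y},\mathcal{O}_X)=0$ for $i\neq 0$, and over a Noetherian local ring this bounds the projective dimension, so $L^{*}_{X/Y}$ is perfect. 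Once $L^{*}_{X/Y}$ is perfect, biduality applies and the hypothesis dualizes back to $L^{*}_{X/Y}\simeq T_{X/Y}^{\vee}[0]$ with $T_{X/Y}^{\vee}$ finite locally free; identifying $T_{X/Y}^{\vee}$ with $\Omega^{1}_{X/Y}$ on degree-$0$ cohomology and applying the criterion of \cite[Proposition~3.1.2]{Il} in its cotangent form finishes the proof. Alternatively — and this is the cheapest route — one observes that the proposition as stated is literally the $R\mathcal{H}om_{\mathcal{O}_X}(-,\mathcal{O}_X)$-dual of \cite[Proposition~3.1.2]{Il}, whose finiteness hypotheses already absorb the perfectness issue, and cites it directly.
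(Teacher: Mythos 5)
The paper does not actually prove this proposition: it attributes it directly to Illusie via the citation in the theorem header, and a footnote earlier in \S 2 explains that the tangent-complex statements are obtained from Olsson's and Laumon--Moret--Bailly's cotangent-complex statements by applying $R\mathcal{H}om_{\mathcal{O}_X}(-,\mathcal{O}_X)$. So your ``alternatively --- and this is the cheapest route'' paragraph is precisely the paper's approach, and the rest of your proposal is a correct expansion of what that citation conceals. Your construction of the map $T_{X/Y}[0]\rightarrow T^{*}_{X/Y}$ by dualizing the truncation $L^{*}_{X/Y}\rightarrow\Omega^{1}_{X/Y}[0]$, and the hyper-$\mathrm{Ext}$ spectral sequence argument for the $H^{0}$-isomorphism, match what one would write if the duality step were made explicit. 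Most usefully, you correctly flag the genuine subtlety in the converse direction: $R\mathcal{H}om_{\mathcal{O}_X}(-,\mathcal{O}_X)$ is not an anti-equivalence on $\D_{\mathrm{qcoh}}^{-}$, so the biduality map $L^{*}_{X/Y}\rightarrow R\mathcal{H}om(T^{*}_{X/Y},\mathcal{O}_X)$ is only known to be an isomorphism once $L^{*}_{X/Y}$ is already perfect, which is exactly what one is trying to establish. Your workaround --- reduce by standard limit/localization arguments to the Noetherian local case, where $L^{*}_{X/Y}$ is pseudo-coherent and bounded above for an lfp morphism, and then use the hypothesized vanishing of $\mathcal{E}xt^{i}(L^{*}_{X/Y},\mathcal{O}_X)$ for $i\neq 0$ to bound the projective dimension and conclude perfectness --- is the right way to close that gap. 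In short: same route as the paper (citation to Illusie's cotangent criterion plus dualization), with the ladder shown and the one real technical issue correctly identified and resolved.
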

\begin{remark}
As in Remark \ref{rem: contangent to tangent}, the citation to \cite{Il} only gives the claim for the cotangent complex and not the tangent complex. However, one can easily check that one formally implies the other by dualizing.
\end{remark}
Moreover, we recall that, in the situation of the above proposition, the dimension of a fiber is computed by the rank of $T_{X/Y}$ around that fiber.

This reduces the geometric problem of smoothness to the more algebraic problem of computing the cohomology of the tangent complex. We would like to have a similar result for stacks. We will now work this out more explicity.
\subsection{The Smooth Case}
Suppose that $X \rightarrow S$ is a smooth Artin $S$-stack for an affine scheme $S$. Given a point $x: S \rightarrow X$, we claim that the pullback of the tangent complex $T^{*}_{X/S}$  described in the previous section along $x$ represents the Picard tangent groupoid described in the beginning. To see this, let $\tilde{x}$ be the lift of $x$ along some smooth map $U \rightarrow X$\footnote{We note that we may always find such a lift \'etale locally on $S$, and since \'etale maps induce quasi-isomorphisms on the tangent complex via the distinguished triangle \ref{thm: tangcompl} (2) this will be sufficient}. Using this and the above description of the tangent complex, we compute that the pullback is the quasi-coherent sheaf of $\mathcal{O}_{S}$-modules $L\tilde{x}^{*}\Cofiber(T^{*}_{U/X} \rightarrow T^{*}_{U/S})$. Since $U \rightarrow X$ and $U \rightarrow S$ are smooth by assumption, the quasi-coherent sheaves $T^{*}_{U/X}$ and $T^{*}_{U/S}$ are both isomorphic to the usual tangent sheaves $T_{U/X}$ and $T_{U/S}$, which both must be locally free $\mathcal{O}_{S}$-modules (Here we have used Proposition \ref{prop: jacclassic}). Therefore, we have an isomorphism  $L\tilde{x}^{*}(\Cofiber(T^{*}_{U/X} \rightarrow T^{*}_{U/S})) = \Cofiber(\tilde{x}^{*}(T_{U/X}) \rightarrow \tilde{x}^{*}(T_{U/S}))$, and an element of $\tilde{x}^{*}(T_{U/X})$ is the same as the datum of a tangent vector $S[\epsilon]/\epsilon^{2} \rightarrow U$ at $\tilde{x}$ and a trivialization of the projection of this map to $X$, with differential given by the forgetful map. Since the map $U \rightarrow X$ is smooth and $S$ is affine it follows that every tangent vector to $X$ at $x$ can be lifted to a tangent vector of $U$ at $\tilde{x}$. It follows that the pullback of the above complex represents the Picard tangent groupoid $T_{X/S,x}$ considered above. Moreover, since $X$ is smooth, by definition $U$ will also be smooth. Therefore, the dimension of $U$ over $S$ locally around $\tilde{x}$ is computed by the rank of $\tilde{x}^{*}T_{U/S}$ as an $\mathcal{O}_{S}$-module. It in turn follows that the dimension of $X$ over $S$ locally around $x$ will be computed by the Euler characteristic of $Lx^{*}T_{X/S}$ as an $\mathcal{O}_{S}$-module. We can summarize this as follows.
\begin{proposition}{\label{prop: PicardGroupoidsArtinSStacks}}
For a scheme $S$ and a smooth Artin $S$-stack $X$, the tangent complex $T_{X/S}^{*}$ is represented by a two-term complex of vector bundles on $X$ sitting in cohomological degrees $[-1,0]$. The pullback of this complex to a point $x: S \rightarrow X$ is a complex of $\mathcal{O}_{S}$-modules, whose associated Picard groupoid is equivalent to the Picard groupoid $T_{X/S,x}$ defined in \S \ref{subsec: ReviewofTangentComplex}. 

Moreover, the pullback of the tangent complex $T_{X/S}^{*}$ to $x$ has Euler characteristic as a locally-free $\mathcal{O}_{S}$-module equal to the dimension of $X$ locally around $x$. 
\end{proposition}
While this is nice, the real power of this theory lies in its ability to detect when $X$ is smooth over $S$ at a point $x$. As seen in Proposition \ref{prop: jacclassic}, this information will be described by the higher cohomology of the complex $T_{X/S}^{*}$.
\subsection{The Singular Case}
Now, given a general Artin stack $X \rightarrow S$, a point $x: S \rightarrow X$, and a lift $\tilde{x}$ to a smooth chart $U \rightarrow X$ as before, the same analysis as above tells us that the pullback of $T_{X/S}^{*}$ to $x$ is given by $L\tilde{x}^{*}\Cofiber(T^{*}_{U/X} \rightarrow T^{*}_{U/S})$. We recall that $T^{*}_{U/X}$ is the pullback by $\triangle_{U/X}^{*}$ (where we recall that $\triangle_{U/X}: U \ra R := U \times_{X} U$ is the natural diagonal map) of the complex given by
\[ \Cofiber(T^{*}_{R/U} \rightarrow (f^{'})^{*}(T^{*}_{U})), \] 
but, by Proposition \ref{prop: jacclassic}, the scheme $U$ is smooth over $S$ around $\tilde{x}$ if and only if, after pulling back to $\tilde{x}$, both of these complexes are concentrated in degree $0$ and are represented by vector bundles on $\mathcal{O}_{S}$. Therefore, we can deduce that $U$ is smooth locally around $\tilde{x}$ if and only if both terms in $L\tilde{x}^{*}\Cofiber(T^{*}_{U/X} \rightarrow T^{*}_{U/S}) \simeq \Cofiber(L\tilde{x}^{*}(T^{*}_{U/X}) \rightarrow L\tilde{x}^{*}(T^*_{U/S}))$ are represented by vector bundles on $S$ in degree $0$, which is in turn equivalent to $X$ being smooth over $S$ locally around $x$. Therefore, we deduce a special case of the following result. 
\begin{proposition}{\cite[Proposition~17.10]{LMB}}{\label{prop: lautang}}
Let $X$ and $Y$ be Artin $S$-stacks and $X \rightarrow Y$ a $1$-morphism over $S$ that is locally of finite presentation. Then $f: X \rightarrow Y$ is smooth if and only if $T_{X/Y}^{*}$ is represented by a length one complex of vector bundles on $X$ sitting in cohomological degrees $[-1,0]$. 
\end{proposition}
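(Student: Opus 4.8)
The plan is to establish the two implications by rather different means: the direction ``$f$ smooth $\Rightarrow$ $T^{*}_{X/Y}$ is a two-term complex of vector bundles in degrees $[-1,0]$'' by pulling the tangent complex back to a smooth chart, exactly as in the computation carried out above for the structure morphism, and the converse by deformation theory. I will use Theorem~\ref{thm: tangcompl}, Proposition~\ref{prop: jacclassic}, and Remark~\ref{rem: pullexist} freely.

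For the forward direction I would pick a smooth surjection $A\colon U\to X$ with $U$ a scheme; since $U$ is a scheme, $A$ and the composite $U\to X\to Y$ are representable, and, being composites of smooth maps, smooth. Hence $T^{*}_{U/X}$ is a vector bundle concentrated in degree $0$ -- it is the pullback along $\triangle_{U/X}$ of $T^{*}_{(U\times_{X}U)/U}$, a vector bundle in degree $0$ by Proposition~\ref{prop: jacclassic} since $U\times_{X}U\to U$ is smooth -- and, pulling back along a smooth atlas of $Y$ and invoking Proposition~\ref{prop: jacclassic} again, $T^{*}_{U/Y}$ is also a vector bundle in degree $0$. The distinguished triangle of Theorem~\ref{thm: tangcompl}(2) for $U\xrightarrow{A}X\xrightarrow{f}Y$ identifies $LA^{*}T^{*}_{X/Y}$ with $\mathrm{cofib}\big(T^{*}_{U/X}\to T^{*}_{U/Y}\big)$, i.e.\ with the two-term complex $[\,T^{*}_{U/X}\to T^{*}_{U/Y}\,]$ in degrees $[-1,0]$ of vector bundles, and by Remark~\ref{rem: pullexist} this equals $A^{*}T^{*}_{X/Y}$. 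Since $A$ is a smooth surjection, and ``perfect of Tor-amplitude contained in $[-1,0]$'' is local for the smooth topology while such complexes are locally on the lisse-\'etale site two-term complexes of vector bundles, I would conclude that $T^{*}_{X/Y}$ itself is represented by such a complex.

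For the converse I would use that, being locally of finite presentation, $f$ is smooth as soon as it is formally smooth: given an affine square-zero extension $T_{0}\hookrightarrow T$ with ideal $I$, a map $T\to Y$, and a $Y$-morphism $x_{0}\colon T_{0}\to X$, one must produce a lift $T\to X$ over $Y$. By the deformation theory of morphisms of Artin stacks, the obstruction lies in $\mathrm{Ext}^{1}_{\mathcal{O}_{T_{0}}}(Lx_{0}^{*}L_{X/Y},I)$ with $L_{X/Y}=R\mathcal{H}om_{\mathcal{O}_{X}}(T^{*}_{X/Y},\mathcal{O}_{X})$; the hypothesis makes $L_{X/Y}$ a two-term complex of vector bundles in \emph{cohomological} degrees $[0,1]$, so $Lx_{0}^{*}L_{X/Y}$ is the same on $T_{0}$ and $R\mathcal{H}om_{\mathcal{O}_{T_{0}}}(Lx_{0}^{*}L_{X/Y},I)$ is quasi-isomorphic to a complex of quasi-coherent sheaves concentrated in cohomological degrees $[-1,0]$. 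Its first hypercohomology over the affine scheme $T_{0}$ vanishes, since the only contributions $H^{1}(T_{0},\mathcal{H}^{0})$ and $H^{2}(T_{0},\mathcal{H}^{-1})$ to it are zero on an affine. So the obstruction vanishes, the lift exists, and $f$ is formally smooth, hence smooth.

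I expect the converse direction to be the main obstacle. The atlas argument cannot simply be reversed: from the triangle one gets $A^{*}T^{*}_{X/Y}\simeq[\,T^{*}_{U/X}\to T^{*}_{U/Y}\,]$, but knowing this two-term complex is representable by vector bundles in degrees $[-1,0]$ only forces $T^{*}_{U/Y}$ to have projective dimension $\le 1$, not to be a vector bundle in degree $0$, so the route is genuinely circular and the deformation-theoretic input is essential -- the decisive point there being precisely that the test scheme $T_{0}$ in the formal-smoothness criterion is affine, which kills the relevant obstruction group. A secondary nuisance is bookkeeping: the conventions relating $T^{*}_{X/Y}$ and $L_{X/Y}$, the identification of $Lf^{*}$ with naive pullback on flat representatives (Remark~\ref{rem: pullexist}), and the passage between ``perfect of Tor-amplitude $[-1,0]$'' and ``globally a two-term complex of vector bundles'', all treated in \cite{Il,LMB,Ols}.
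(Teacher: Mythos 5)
Your proposal is correct. Since the paper itself does not prove this statement---it cites \cite[Prop.~17.10]{LMB} and sketches the atlas computation only for what it calls ``a special case,'' which is effectively your forward direction---your deformation-theoretic converse is genuine added content. The obstruction computation is right: with $Lx_{0}^{*}L_{X/Y}$ a two-term complex of vector bundles in degrees $[0,1]$, $R\mathcal{H}om_{\mathcal{O}_{T_{0}}}(Lx_{0}^{*}L_{X/Y},I)$ sits in degrees $[-1,0]$, and its $\mathbb{H}^{1}$ over the affine $T_{0}$ vanishes. You should, however, acknowledge that the equivalence of smoothness with formal smoothness plus local finite presentation, and the identification of the obstruction group, for $1$-morphisms of Artin stacks are themselves part of \cite[Ch.~17]{LMB} and \cite{Ols}; and you should flag that recovering $L_{X/Y}$ as $R\mathcal{H}om_{\mathcal{O}_{X}}(T^{*}_{X/Y},\mathcal{O}_{X})$ is biduality, so it presupposes $L_{X/Y}$ is perfect, which deserves a word given that the hypothesis is phrased entirely on the tangent side (for $f$ locally of finite presentation $L_{X/Y}$ is pseudo-coherent, and one then argues that $T^{*}_{X/Y}$ perfect of Tor-amplitude $[-1,0]$ forces $L_{X/Y}$ perfect of Tor-amplitude $[0,1]$).

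On the other hand, your self-assessment overstates the difficulty of the atlas route. From the triangle $T^{*}_{U/Y}$ is perfect of Tor-amplitude $[-1,0]$, so in particular $H^{1}(T^{*}_{U/Y})=0$; since $U\to Y$ is representable, a base change to an atlas of $Y$ shows $T^{*}_{U/Y}$ has cohomology in degrees $\ge 0$ and is therefore concentrated in degree $0$, and dually $L_{U/Y}\simeq\Omega_{U/Y}[0]$ with $\mathrm{Ext}^{1}(\Omega_{U/Y},\mathcal{O}_{U})=0$. A minimal free resolution of $\Omega_{U/Y}$ over a local ring then shows, via Nakayama and the $\mathrm{Ext}^{1}$-vanishing, that the degree-one term is zero, so $\Omega_{U/Y}$ is locally free and $U\to Y$, hence $f$, is smooth. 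That Nakayama step is precisely the ingredient the naive reversal omits; it is elementary, and it is presumably closer to what \cite{LMB} does, so the atlas argument is not circular---your deformation theory is a valid alternative, not a necessity.
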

In particular, the cohomology in degree $0$ of $T_{X/S}^{*}$ specifies the space of tangent vectors at a given point, while the cohomology in degree $-1$ of $T_{X/S}^{*}$ controls the infinitesimal automorphisms at the point $x$. 

This theorem reduces the problem of checking smoothness of an Artin stack $X$ to computing the tangent complex. Let us conclude this section by carrying this out in some examples. Set $k$ to be an algebraically closed field. 
\begin{Example}{\label{ex: classifying}}
Let $Y$ be a variety with an action of a linear algebraic group $G/\Spec{k}$. We consider the classifying stack $[Y/G] \rightarrow \Spec{k}$. Let $\mathrm{Lie}(G)$ be the Lie algebra of $G$. We look at the usual atlas given by the projection $Y \rightarrow [Y/G]$, we claim that the pullback of $T^{*}_{[Y/G]}$ to $Y$ under this atlas is isomorphic to $\Cofiber(\mathrm{Lie}(G) \otimes \mathcal{O}_{Y} \rightarrow T^{*}_{Y})$, where the differential is defined by the action of $G$ extended $\mathcal{O}_{Y}$-linearly. In particular, the pullback by definition is given by:
\[ \Cofiber(T^{*}_{Y/[Y/G]} \rightarrow T^{*}_{Y}). \]
We consider the diagonal map $\triangle: Y \rightarrow Y \times_{[Y/G]} Y$, and let $p_{1}: Y \times_{[Y/G]} Y \rightarrow Y$ be the natural projection obtained by base-changing the map defining the atlas. If we write $Y \times_{[Y/G]} Y \simeq Y \times G$ then $p_{1}$
is projection to the first factor and $\triangle$ is the embedding into the first factor. This allows us to see that $T^{*}_{Y/[Y/G]}$ is canonically identified with the fiber of the map $T_{Y}^{*} \oplus \mathrm{Lie}(G) \otimes \mathcal{O}_{Y} \rightarrow T_{Y}^{*}$ given by projection. From here, the claim easily follows.
\end{Example}
\begin{Example}{\label{ex: BunGex}}
Fix a smooth projective curve $Y/\Spec{k}$ of genus $g$. For a connected reductive group $G/\Spec{k}$, let $X = \Bun_{G}$ denote the moduli stack of $G$-bundles on $Y$. It is well known that $\Bun_{G}$ is a smooth Artin stack over $\Spec(k)$. If we take a $k$-point $x: \Spec{k} \rightarrow X$ corresponding to a $G$-bundle $\mathcal{F}_{G}$ on $Y$, one can compute that the pullback of this complex to $x$ is quasi-isomorphic to $R\Gamma(Y,\mathcal{F}_{G} \times^{G,Ad} \mathrm{Lie}(G)[1])$ (cf. \cite[Section~4.5]{Be}). Therefore, since $Y$ is a curve, by Proposition \ref{prop: jacclassic}, \ref{prop: lautang}, we can see that $\Bun_{G}$ has dimension equal to the Euler characteristic of $R\Gamma(Y,\mathcal{F}_{G} \times^{G,Ad} \mathrm{Lie}(G)[1])$ locally around $x$ which by Riemann-Roch is precisely $\dim(G)(g - 1)$ (since $\mathcal{F}_{G} \times^{G,Ad} \mathrm{Lie}(G)$ is a self-dual vector bundle).
\end{Example}
The appearance of the Lie algebra of $G$ in both of these examples hints at a general phenomenon relating the two, as alluded to in the introduction. To see this, let's suppose that $Y$ is a smooth projective curve over $\Spec{k}$ and that $Z$ is a smooth Artin $Y$-stack. We let $\mathcal{M}_{Z} \rightarrow \Spec{k}$ be the moduli space parameterizing sections $Y \times_{\Spec{k}} S \rightarrow Z$ over $Y$, for $S$ a $k$-scheme. Suppose that $\mathcal{M}_{Z} \rightarrow \Spec{k}$ defines an Artin stack over $\Spec{k}$\footnote{This will be true in all the examples we will consider. For general results concerning the representability of this space see \cite[Theorem~1.2]{HR}.}. If we are given a point $x: \Spec{k} \rightarrow \mathcal{M}_{Z}$ corresponding to a section $s_{x}: Y \rightarrow Z$ then we see that giving a tangent vector $D \times_{\Spec{\bb{Z}}} \Spec(k) \rightarrow \mathcal{M}_{Z}$ at $x$ is equivalent to giving a lift of the section $s_{x}$ to  a section $\tilde{s}_{x}: Y[\epsilon]/\epsilon^{2} \rightarrow Z$. This gives us an isomorphism 
\[ H^{0}(Y,Ls_{x}^{*}T^{*}_{Z/Y}) \simeq H^{0}(Lx^{*}T_{X}^{*}) \]
of vector spaces over $k$. Similarly, one has an isomorphism of $H^{-1}$ for automorphisms of the section $s_{x}$. By considering the higher order deformations, we obtain an isomorphism of complexes of vector spaces
\[ R\Gamma(Y,Ls_{x}^{*}T^{*}_{Z/Y}) \simeq Lx^{*}T_{X}^{*} \]
over $k$. We record this as a proposition. 
\begin{proposition}{\label{prop: secdefo}}
Let $Y$ be a smooth projective curve over an algebraically closed field $\Spec{k}$, $Z \rightarrow Y$ a smooth Artin $Y$-stack, and $\mathcal{M}_{Z} =: X$ the moduli stack of sections of $Z$ over $Y$. Let $x: \Spec{k} \rightarrow \mathcal{M}_{Z}$ be a point with corresponding section $s_{x}: Y \rightarrow Z$. Then we have an isomorphism 
\[ R\Gamma(Y,Ls_{x}^{*}T^{*}_{Z/Y}) \simeq Lx^{*}T_{X}^{*} \]
of complexes of vector spaces over $k$.
\end{proposition}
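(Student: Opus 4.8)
The plan is to present $\mathcal{M}_{Z}$ as a mapping stack, to identify its tangent complex with a pushforward of $\mathrm{ev}^{*}T^{*}_{Z/Y}$ along the universal curve, and then to specialize along $x$. Write $\pi\colon Y\times_{k}\mathcal{M}_{Z}\to\mathcal{M}_{Z}$ for the projection --- a proper smooth morphism of relative dimension $1$, being the base change of $Y\to\Spec k$ --- and let $\mathrm{ev}\colon Y\times_{k}\mathcal{M}_{Z}\to Z$ be the tautological section classified by $\id_{\mathcal{M}_{Z}}$, which is a morphism over $Y$. Applying the functoriality map of Theorem~\ref{thm: tangcompl}(1) to $\mathrm{ev}$, viewed as a morphism of Artin $Y$-stacks, and using the base-change isomorphism $T^{*}_{(Y\times_{k}\mathcal{M}_{Z})/Y}\simeq\pi^{*}T^{*}_{\mathcal{M}_{Z}/k}$ (valid because $Y\to\Spec k$ is flat; cf. Remark~\ref{rem: pullexist}), one obtains a natural map $\pi^{*}T^{*}_{\mathcal{M}_{Z}/k}\to L\mathrm{ev}^{*}T^{*}_{Z/Y}$; passing to the adjoint (Proposition~\ref{lem: fulltancomp}, applied on the truncations $\D^{[-1,\infty)}_{\text{qcoh}}$, which is harmless since everything in sight has cohomology in degrees $[-1,1]$) gives a morphism
\[ \eta\colon T^{*}_{\mathcal{M}_{Z}/k}\longrightarrow R\pi_{*}\bigl(L\mathrm{ev}^{*}T^{*}_{Z/Y}\bigr) \]
in $\D'_{\text{qcoh}}(\mathcal{M}_{Z},\mathcal{O}_{\mathcal{M}_{Z}})$. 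Once $\eta$ is known to be an isomorphism, the Cartesian square with corner $Y$, maps $\id_{Y}\times x\colon Y\to Y\times_{k}\mathcal{M}_{Z}$ and $\pi$, proper base change for $\pi$, and the identity $\mathrm{ev}\circ(\id_{Y}\times x)=s_{x}$ give
\[ Lx^{*}T^{*}_{\mathcal{M}_{Z}/k}\;\simeq\;Lx^{*}R\pi_{*}L\mathrm{ev}^{*}T^{*}_{Z/Y}\;\simeq\;R\Gamma\bigl(Y,\,s_{x}^{*}T^{*}_{Z/Y}\bigr), \]
which is the assertion.

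To prove $\eta$ is an isomorphism, note first that its target is concentrated in degrees $[-1,1]$ (Proposition~\ref{prop: lautang} for the smooth morphism $Z\to Y$, together with $\dim(Y)=1$), and that the same holds for its source --- this can be read off either from the description of $T^{*}_{\mathcal{M}_{Z}/k}$ on a smooth atlas or from the fact that deformations of a section of a smooth morphism over a curve are governed by cohomology on that curve. Restricting to a smooth atlas of $\mathcal{M}_{Z}$, it then suffices to check that $L\xi^{*}\eta$ is a quasi-isomorphism for every $k$-point $\xi$ of $\mathcal{M}_{Z}$. For such a $\xi$, corresponding to a section $s$, the target of $L\xi^{*}\eta$ is $R\Gamma(Y,s^{*}T^{*}_{Z/Y})$ by the base change just used, so one has to match its cohomology with that of $L\xi^{*}T^{*}_{\mathcal{M}_{Z}/k}$. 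This is where the defining property of $\mathcal{M}_{Z}$ as a space of sections does all the work: by construction the deformation functor of the point $\xi$ coincides with that of the section $s$, and the standard dictionary between the tangent complex and deformation theory (Illusie \cite{Il} for the schematic inputs, \cite{Ols,LMB} for the passage to stacks) then identifies $\mathcal{H}^{0}$ of both sides with the first-order deformations of $s$, $\mathcal{H}^{-1}$ with the infinitesimal automorphisms of $s$, and $\mathcal{H}^{1}$ with the primary obstruction space; since the target is a complex of vector spaces over a field, hence formal, it is in fact enough to match the dimensions of these three groups, which is why comparing obstruction spaces suffices in degree $1$. This is exactly the computation recalled in Example~\ref{ex: BunGex} in the case $Z=[Y/G]$, via Example~\ref{ex: classifying}.

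The only genuine difficulty here is formal rather than geometric. Because $\pi$ and $\mathrm{ev}$ are morphisms of Artin \emph{stacks}, both the construction of $\eta$ and the base-change step must be carried out inside the left-completed lisse-\'etale derived category $\D'_{\text{qcoh}}$, where --- as recalled after Remark~\ref{rem: quasicoh} --- naive pullback is not a morphism of topoi; one thus needs to know that $R\pi_{*}$ has bounded amplitude (here $[0,1]$, since $\pi$ has relative dimension $1$), that it is compatible with the truncations defining $\D'_{\text{qcoh}}$, that it admits on each truncation the adjunction of Proposition~\ref{lem: fulltancomp}, and that it satisfies base change against the point $x$ (using properness and flatness of $\pi$). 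Assembling these standard but delicate facts from \cite{Ols} is the main technical burden. Since this proposition serves only to motivate the Fargues--Fontaine statements proved rigorously in \S3--\S4, one could alternatively dispense with the map $\eta$ altogether and argue directly through the deformation/obstruction comparison of the previous paragraph.
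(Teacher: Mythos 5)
Your proposal is correct in substance but takes a genuinely more elaborate route than the paper. The paper's argument for Proposition~\ref{prop: secdefo} (given in the paragraph preceding it) is purely deformation-theoretic and pointwise: giving a tangent vector $D\to\mathcal{M}_Z$ at $x$ is the same as giving a first-order deformation of the section $s_x$, and this identifies $H^0(Lx^*T^*_{\mathcal{M}_Z/k})$ with $H^0(Y,s_x^*T^*_{Z/Y})$; the same reasoning in degree $-1$ (automorphisms of $s_x$) and in degree $1$ (obstructions) upgrades this to the isomorphism of complexes. You instead build a global comparison map $\eta\colon T^*_{\mathcal{M}_Z/k}\to R\pi_*L\mathrm{ev}^*T^*_{Z/Y}$ via the universal evaluation map and adjunction, and specialize it along $x$ by proper base change. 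This is the "mapping-stack" tangent-complex formalism familiar from derived algebraic geometry; it has the real advantage of yielding a \emph{functorial} global identification of tangent complexes on all of $\mathcal{M}_Z$ rather than a pointwise one, at the cost of the delicate $\D'_{\text{qcoh}}$-formalism issues you flag (amplitude of $R\pi_*$, compatibility with truncations, base change). What it does not do is circumvent the core of the paper's argument: as you say yourself, verifying that $\eta$ is a quasi-isomorphism on fibers at $k$-points is ultimately the same deformation/automorphism/obstruction dictionary the paper invokes directly, so the $\eta$-machinery is a globalization rather than a shortcut.

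One local slip: the remark "since the target is a complex of vector spaces over a field, hence formal, it is in fact enough to match the dimensions of these three groups" is not quite right. Two complexes of $k$-vector spaces with equal cohomology dimensions are abstractly quasi-isomorphic, but that does not make a given map between them a quasi-isomorphism; you need $L\xi^*\eta$ itself to induce the identifications, i.e., to be compatible with the deformation-theoretic interpretation of both sides. This is what actually happens (both sides classify the same lifting problems and $\eta$ is built from the universal section), and the paper implicitly assumes the same thing, so it is a gap of the same informal character as in the paper — but the dimension-count justification as stated does not close it.
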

With this in hand, let's revisit Example \ref{ex: BunGex}. 
\begin{Example}{\label{ex: sections}}
Let $Y \rightarrow \Spec{k}$ be a smooth projective curve and $G/k$ a connected reductive group, as before. We note that $\Bun_{G} = \mathcal{M}_{[Y/G]}$. Suppose that we have a $G$-torsor $\mathcal{F}_{G}$ on $Y$ corresponding to a section $s: Y \rightarrow [Y/G]$. Proposition \ref{prop: secdefo} tells us that to compute the pullback of the tangent complex of $T^{*}_{\Bun_{G}}$ to the point defined by $\mathcal{F}_{G}$, it suffices to compute:
\[ Ls^{*}T^{*}_{[Y/G]/Y}. \]
We can do this similarly to Example \ref{ex: classifying}. In particular, we can regard the section 
\[ s: Y \rightarrow [Y/G] \]
as an atlas for $[Y/G]$ over $Y$. We then have an isomorphism: $Y \times_{[Y/G]} Y \simeq \mathcal{F}_{G}$. Arguing as in Example \ref{ex: classifying}, we can see that $T^{*}_{Y/[Y/G]}$ is identified with $\mathrm{Lie}(G) \times^{G,Ad} \mathcal{F}_{G}$. Therefore, the formula computing the pullback $Ls^{*}T^{*}_{[Y/G]/Y}$ becomes
\[ \Cofiber(\mathrm{Lie}(G) \times ^{G,Ad} \mathcal{F}_{G} \rightarrow T_{Y/Y} \simeq 0) = \mathrm{Lie}(G) \times^{G,Ad} \mathcal{F}_{G}[1], \]
which is precisely what we expect in light of Example \ref{ex: BunGex}. 
\end{Example} 
Before concluding this section, we will record one useful consequence of the above discussion that will be important for the proof of Theorem \ref{thm: stackyjacobi}. From now on, let $Y$ be an arbitrary scheme. Suppose we have a flat  morphism $f: Z_{1} \rightarrow Z_{2}$ of Artin $Y$-stacks which is locally of finite presentation. Let's suppose we have a section $s_{x}: Y \rightarrow Z_{1}$ and let $s_{y}: Y \rightarrow Z_{2}$ be the corresponding section induced by $f$. We let $Z$ denote the fiber of $f$ over $s_{y}$. Then $s_{x}$ induces a section $Y \rightarrow Z$, which we will abusively also denote by $s_{x}$. It follows from Theorem \ref{thm: tangcompl} (2) that the following is true, which we record as a corollary for future use.  
\begin{corollary}{\label{cor: disttriangle}}
With notation as above, for $f: Z_{1} \ra Z_{2}$ a flat locally finitely presented morphism of Artin $Y$-stacks we have the following distinguished triangle of tangent complexes
\[ \begin{tikzcd}
& & Ls_{y}^{*}T^{*}_{Z_{2}/Y}\arrow[dl,"+1"] & \\
& Ls_{x}^{*}T^{*}_{Z/Y} \arrow[rr] &  & Ls_{x}^{*}T^{*}_{Z_{1}/Y} \arrow[ul,"df_{x/y}"]
\end{tikzcd} \]
in $\D'_{\mathrm{qcoh}}(Y,\mathcal{O}_{Y})$. 
\end{corollary}
We will now review the other key player in our main Theorem, Banach-Colmez spaces. 
\section{Review of Banach-Colmez Spaces}
In the first two subsections, we will review the theory of Banach-Colmez spaces, as originally introduced in \cite{ColmezNiziol,LB} and later refined in \cite[Chapter~II]{FS}. In the third subsection, we will study certain stack quotients of Banach-Colmez spaces introducing the Picard $v$-groupoid mentioned in Remark \ref{rem: Picardvgroupoidlocalmodel}. Along the way, we will prove some simple lemmas related to two-term complexes of vector bundles on the Fargues-Fontaine curve that will aid us in our analysis in \S 4.

Fix $S \in \Perf$ and consider the relative algebraic Fargues-Fontaine curve $X_{S}$ and $\mathcal{E}$ a vector bundle on it. We have the following key definition.
\begin{definition}
We define the presheaf $\mathcal{H}^{0}(\mathcal{E}) \rightarrow S$ (resp. presheaf $\mathcal{H}^{1}(\mathcal{E}) \rightarrow S$) to be the functor on $\Perf_{S}$ sending $T \in \Perf_{S}$ to $H^{0}(X_{T},\mathcal{E}_{T})$ (resp. $H^{1}(X_{T},\mathcal{E}_{T})$), where $\mathcal{E}_{T}$ is the base-change of $\mathcal{E}$ to $X_{T}$. 
\end{definition}
\begin{remark}{\label{rem: whentheyarevsheaves}}
As we will discuss in more detail in Proposition \ref{prop: repofBC}, the presheaf $\mathcal{H}^{0}(\mathcal{E}) \ra S$ is always a $v$-sheaf over $S$ (in fact a locally spatial diamond) and the $v$-presheaf $\mathcal{H}^{1}(\mathcal{E})$ is also a $v$-sheaf, at least assuming that $\mathcal{E}$ has negative Harder-Narasimhan slopes after pulling back to each geometric point of $S$. In these cases, we will refer to these $v$-sheaves as Banach-Colmez spaces.
\end{remark}
Given two bundles $\mathcal{F}$ and $\mathcal{E}$ on $X_{S}$, we can identify $\mathcal{H}^{0}(\mathcal{F}^{\vee} \otimes \mathcal{E})$ with the moduli space, denoted $\mathcal{H}om(\mathcal{F},\mathcal{E})$, parametrizing maps $\mathcal{F}_{T} \rightarrow \mathcal{E}_{T}$ of $\mathcal{O}_{X_{T}}$-modules. This observation allows us to consider the following open sub-functors.  
\begin{definition}{\label{defn: InjSurj}}
Let $\mathcal{F}$ and $\mathcal{E}$ be two bundles on the algebraic Fargues-Fontaine curve $X_{F}$, for $F$ an algebraically closed complete non-archimedean field. We consider the following open sub-functors of $\mathcal{H}^{0}(\mathcal{F}^{\vee} \otimes \mathcal{E})$.
\begin{enumerate}
    \item We let $\mathcal{S}urj(\mathcal{F},\mathcal{E}) \subset \mathcal{H}^{0}(\mathcal{F}^{\vee} \otimes \mathcal{E})$ be the sub-functor parametrizing surjections $\mathcal{F}_{T} \rightarrow \mathcal{E}_{T}$ of $\mathcal{O}_{X_{T}}$-modules.
    \item We let $\mathcal{I}nj(\mathcal{F},\mathcal{E}) \subset \mathcal{H}^{0}(\mathcal{F}^{\vee} \otimes \mathcal{E})$ be the sub-functor parametrizing maps $\mathcal{F}_{T} \rightarrow \mathcal{E}_{T}$ whose pullback to any geometric point $\Spa(F)$ of $T$ is an injection of $\mathcal{O}_{X_{F}}$-modules. 
\end{enumerate}
If $\mathcal{F} = \mathcal{E}$ then the functor $\mathcal{S}urj(\mathcal{F},\mathcal{E})$ will identify with the presheaf parameterizing automorphisms of vector bundle $\mathcal{E}$. We will denote this by $\mathrm{Aut}(\mathcal{E})$.
\end{definition}
\begin{remark}{\label{rem: openess}}
One can check that these both give rise to well-defined open sub-functors of $\mathcal{H}^{0}(\mathcal{F}^{\vee} \otimes \mathcal{E})$. For $\mathcal{S}urj$, see \cite[Lemma~IV.1.20]{FS}, and for $\mathcal{I}nj$ the result over a geometric point is \cite[Proposition~3.3.6]{BFH+}. To verify the claim in general, by \cite[Proposition~10.11]{Ecod}, it suffices to check the claim after passing to a $v$-cover of $S$. Since $rank(\mathcal{E})$, $rank(\mathcal{F})$, and $\deg(\mathcal{F})$ are locally constant on $S$ by \cite[Lemma~7.2.2]{KL}, we can therefore assume they are constant. Then the proof given in \cite[Proposition~3.3.6]{BFH+} works exactly the same.  
\end{remark}
Similarly, if we consider $\mathcal{H}^{1}(\mathcal{F}^{\vee} \otimes \mathcal{E})$, this corresponds to the presheaf $\mathcal{E}xt^{1}(\mathcal{F},\mathcal{E})$ parametrizing extensions of $\mathcal{O}_{X_{T}}$-modules of the form:
\[ 0 \rightarrow \mathcal{F}_{T} \rightarrow \mathcal{G} \rightarrow \mathcal{E}_{T} \rightarrow 0. \]
Using this, we can define the following.
\begin{definition}
Suppose that $\mathcal{F}$ and $\mathcal{E}$ are two vector bundles on $X_{S}$ of constant rank. For $\mathcal{G}$ a vector bundle on the algebraic Fargues-Fontaine curve $X_{F}$, for $F$ an algebraically closed complete non-archimedean field, of rank equal to $rank(\mathcal{F}) + rank(\mathcal{E})$, we consider the locally closed sub-functor
\[ \mathcal{E}xt^{1}(\mathcal{F},\mathcal{E})^{\mathcal{G}} \subset \mathcal{E}xt^{1}(\mathcal{F},\mathcal{E}) \simeq \mathcal{H}^{1}(\mathcal{F}^{\vee} \otimes \mathcal{E}) \]
parametrizing extensions whose central term is isomorphic to $\mathcal{G}$ after pulling back to any geometric point\footnote{We recall that the category of vector bundles on the algebraic Fargues-Fontaine curve $X_{F}$ attached to an algebraically closed perfectoid field $F$, does not depend on the choice of algebraically closed perfectoid field (See \cite[Theorem~5.1]{GTorseursFargues}), and we are implicitly using this here.}. 
\end{definition}
\begin{remark}{\label{rem: uppersmicont}}
The claim that this defines a locally closed sub-functor is easy to see by upper semi-continuity of the Harder-Narasimhan polygon \cite[Thm~7.4.5]{KL}. In particular, if $\mathcal{G}$ is a semi-stable vector bundle then this is actually an open sub-functor.  
\end{remark}
With these definitions out of the way, let us prove one basic lemma on smoothness that will aid us in our analysis throughout this section. Let us first recall the basic definition.
\begin{definition}{\cite[Definition~IV.1.11]{FS}}{\label{def: cohsmooth}}
Let $f: X \rightarrow Y$ be a map of Artin $v$-stacks. We say $f$ is cohomologically smooth if there exists a cohomologically smooth surjection $V \rightarrow X$ from a locally spatial diamond such that the composite $V \rightarrow X \xrightarrow{f} Y$ is separated, and that for one such map (equivalently any, using \cite[Proposition~23.13]{Ecod}) the composite is cohomologically smooth, where we note that it is a separated morphism that is representable in locally spatial diamonds, by the definition of Artin $v$-stack, so asking that it is cohomologically smooth is well-defined \cite[Definition~23.8]{Ecod}.
\end{definition}
Now we have the following basic lemma.
\begin{lemma}{\label{lemma: cohdimadd}}
Let $f: X \rightarrow Y$ and $g: Y \rightarrow Z$ be maps of Artin $v$-stacks. Assume that $f$ and $g$ are cohomologically smooth then $g \circ f$ is also cohomologically smooth. Conversely, if $f$ and $g \circ f$ are cohomologically smooth, $f$ is surjective and the map $f$ becomes representable and locally compactifiable after precomposition with an atlas $U \ra X$ for $X$, then $g$ is cohomologically smooth.
\\\\
Moreover, assuming that $f$ and $g$ are cohomologically smooth of pure $\ell$-dimension $d$ and $e$ then $g \circ f$ is cohomologically smooth of pure $\ell$-dimension equal to $d + e$.
\end{lemma}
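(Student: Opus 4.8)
The plan is to reduce everything to the representable case, where cohomological smoothness and its behavior under composition are already established in \cite{Ecod,FS}. First I would prove the composition statement. By Definition \ref{def: cohsmooth}, choose a cohomologically smooth surjection $V \to X$ from a locally spatial diamond with $V \to X \xrightarrow{f} Y$ separated and cohomologically smooth. Similarly, choose a cohomologically smooth surjection $W \to Y$ from a locally spatial diamond with $W \to Y \xrightarrow{g} Z$ separated and cohomologically smooth. The issue is that $V \to Y$ need not factor through $W$, so I would form the fiber product $V' := V \times_{Y} W$, which is representable in locally spatial diamonds over $V$ (since $W \to Y$ is, being a composition of a representable map and using the Artin $v$-stack diagonal), hence itself a locally spatial diamond; the projection $V' \to V$ is a base change of the cohomologically smooth map $W \to Y$, so it is cohomologically smooth, and thus $V' \to X$ is cohomologically smooth and surjective (surjectivity of $W \to Y$ gives surjectivity of $V' \to V$). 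Now $V' \to V \to X \xrightarrow{f} Y$ factors as $V' \to W \to Y$; the first map $V' \to W$ is a base change of the cohomologically smooth separated map $V \to Y$, hence cohomologically smooth and separated, and composing with the separated cohomologically smooth $W \to Y \xrightarrow{g} Z$ gives that $V' \to Z$ is separated and cohomologically smooth by the representable composition result \cite[Proposition~23.13, Definition~23.8]{Ecod}. Since $V' \to X$ is a cohomologically smooth surjection from a locally spatial diamond and the composite $V' \to Z$ is separated and cohomologically smooth, $g \circ f$ is cohomologically smooth by Definition \ref{def: cohsmooth}.

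For the converse, suppose $f$ and $g \circ f$ are cohomologically smooth and $g$ is surjective; I want $g$ cohomologically smooth. Pick a cohomologically smooth surjection $V \to X$ from a locally spatial diamond witnessing smoothness of $f$ (so $V \to X \to Y$ separated and cohomologically smooth) and, since $g \circ f$ is cohomologically smooth, one may arrange (replacing $V$ by a further cohomologically smooth cover, using \cite[Proposition~23.13]{Ecod}) that $V \to X \to Z$ is also separated and cohomologically smooth. The composite $V \to Y$ is a cohomologically smooth surjection from a locally spatial diamond (surjective because $V \to X$ and $f$ are surjective — $f$ is surjective since it is cohomologically smooth), so it is an admissible witness for testing cohomological smoothness of $g$. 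By the representable descent-type result \cite[Proposition~23.13]{Ecod}, since $V \to Y$ is a cohomologically smooth surjection and $V \to Y \xrightarrow{g} Z$ equals the separated cohomologically smooth map $V \to Z$, we conclude $g$ is cohomologically smooth. Here one uses that $V \to Y$ being cohomologically smooth and surjective lets one descend the property "$g$ is cohomologically smooth" from the fact that the composite is; this is exactly the content of the "equivalently any" clause in Definition \ref{def: cohsmooth}, applied after noting $g$ is representable in locally spatial diamonds up to the Artin-stack structure, or more precisely by testing against an atlas of $Y$.

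Finally, for the $\ell$-dimension statement: with $V' \to Z$ the separated cohomologically smooth map constructed above (of pure $\ell$-dimension, after decomposing $V'$ into its constant-dimension pieces), I would compute $R(-)^{!}(\mathbb{F}_{\ell})$ along the tower $V' \to V \to X \xrightarrow{f} Y \xrightarrow{g} Z$ by the transitivity $R(g \circ f)^{!} \simeq Rf^{!} \circ Rg^{!}$ for cohomologically smooth maps (\cite[Proposition~23.13]{Ecod} and the surrounding formalism). Since $Rf^{!}(\mathbb{F}_{\ell})$ sits locally in homological degree $2d$ and $Rg^{!}(\mathbb{F}_{\ell})$ locally in homological degree $2e$, and these $!$-pullbacks are (locally) invertible shifts of the constant sheaf for cohomologically smooth maps, the composite $R(g\circ f)^{!}(\mathbb{F}_{\ell}) = Rf^{!}(Rg^{!}(\mathbb{F}_{\ell}))$ sits locally in homological degree $2(d+e)$, giving $\dim_{\ell}(g \circ f) = d + e$. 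The main obstacle is bookkeeping in the non-representable setting: one cannot directly quote $R(g\circ f)^! \simeq Rf^! \circ Rg^!$ for Artin $v$-stacks without first passing to diamond atlases, so the real work is the fiber-product construction of $V'$ and checking it remains a locally spatial diamond with all the requisite separatedness, which is routine but must be done carefully.
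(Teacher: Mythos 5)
Your argument for the forward composition and for the $\ell$-dimension formula is essentially the same as the paper's: form the fiber product chart (your $V' = V\times_Y W$ is canonically the same as the paper's $(W\times_Y X)\times_X V$), then invoke \cite[Proposition~23.13]{Ecod} for the composite and the transitivity $R(g\circ f)^! \simeq Rf^!\circ Rg^!$ for the dimension count. Those parts are fine.

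In the converse direction there is a genuine error. You write that $V\to Y$ is surjective ``because $V\to X$ and $f$ are surjective --- $f$ is surjective since it is cohomologically smooth.'' Cohomologically smooth does not imply surjective: any open immersion of locally spatial diamonds is cohomologically smooth of $\ell$-dimension $0$, and in the Artin $v$-stack setting the same persists. So ``$f$ is cohomologically smooth'' gives you nothing toward surjectivity of $f$, and hence nothing toward surjectivity of the composite $V\to Y$. But surjectivity of $V\to Y$ is exactly the property you need in order for $V\to Y$ to qualify as a chart in Definition~\ref{def: cohsmooth} (the definition requires a cohomologically smooth \emph{surjection} from a locally spatial diamond). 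Since you cannot manufacture it from the stated hypotheses, this step does not go through as written. (For what it is worth, the paper's own proof asserts without comment that $V\to Y$ and $V\to Z$ are cohomologically smooth surjections, which likewise requires $f$ to be surjective; the hypothesis that really makes the argument work is surjectivity of $f$, which implies surjectivity of $V\to Y$ directly and, combined with surjectivity of $g$, surjectivity of $V\to Z$. In the applications in the paper $f$ is always surjective, so the lemma is applied correctly, but the justification you supplied is a false implication and should be replaced by an explicit appeal to surjectivity of $f$.)
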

\begin{proof}
Assuming all the maps are representable in locally spatial diamonds this is precisely \cite[Proposition~23.13]{Ecod}. For the first part, since $f$ and $g$ are cohomologically smooth, we can choose cohomologically smooth surjections $V \rightarrow X$ and $W \rightarrow Y$ meeting the conditions of Definition \ref{def: cohsmooth} for $f$ and $g$, respectively. Now, using \cite[Proposition~23.13]{Ecod}, one can check that the fiber product $(W \times_{Y} X) \times_{X} V \rightarrow X$ is the desired cohomologically smooth surjection for the map $g \circ f$ (using \cite[Proposition~1.21]{Ecod}). For the second part, we can choose a cohomologically smooth surjection $V \rightarrow X$ for the map $X \rightarrow Y$, since $g \circ f$ is cohomologically smooth and $f$ is surjective, this also defines a cohomologically smooth surjection for the map $g \circ f: X \rightarrow Z$. Therefore, we get a diagram
\[ \begin{tikzcd}
&  V \arrow[dr] \arrow[r] & Y \arrow[d,"g"] \\
&   & Z 
\end{tikzcd} \]
where $V \rightarrow Y$ and $V \rightarrow Z$ are cohomologically smooth surjections. The claim follows. For the claim on $\ell$-dimension, it follows easily from the formula 
\[ R((g \circ f)^{!})(\mathbb{F}_{\ell}) = Rf^{!}Rg^{!}(\mathbb{F}_{\ell}) = Rf^{!}(\mathbb{F}_{\ell}) \otimes f^{*}Rg^{!}(\mathbb{F}_{\ell}) \]
which follows since $g \circ f$ is cohomologically smooth, by definition of $R(g \circ f)^{!}$ \cite[Definition~IV.1.15]{FS}. 
\end{proof}
With this in hand, we will now study the representability of the Banach-Colmez spaces defined above over an arbitrary base $S$.
\subsection{Banach-Colmez Spaces in Families}
We recall the key smoothness and representability statements for families of Banach-Colmez spaces defined by vector bundles on algebraic Fargues-Fontaine curve  $X_{S}$ for $S \in \Perf$ proven in \cite{FS}.
\begin{proposition}{\cite[Proposition~II.2.6,II.3.5]{FS}}{\label{prop: repofBC}}
Let $\mathcal{E}$ be a vector bundle on the relative algebraic Fargues-Fontaine curve $X_{S}$ for $S \in \Perf$ an affinoid perfectiod space. Then the following is true.
\begin{enumerate}
\item The presheaf $\mathcal{H}^{0}(\mathcal{E}) \rightarrow S$ defines a locally spatial partially proper diamond over $S$. This is cohomologically smooth over $S$ if $\mathcal{E}$ has positive Harder-Narasimhan (abbv. HN) slopes (See Definition \ref{def: HNfilt}) after pulling back to any geometric point of $S$. We will refer to this as a positive Banach-Colmez space. 
\item If $\mathcal{E}$ is a vector bundle with negative HN-slopes after pulling back to any geometric point of $S$ then the presheaf $\mathcal{H}^{1}(\mathcal{E}) \rightarrow S$ defines a locally spatial, partially proper, and cohomologically smooth diamond over $S$. We will refer to this as a negative Banach-Colmez space. 
\end{enumerate}
\end{proposition}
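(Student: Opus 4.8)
This is \cite[Propositions~II.2.6 and II.3.5]{FS}, and the plan is to give a proof by d\'evissage along the Harder--Narasimhan filtration. First I would record that being a locally spatial diamond, being partially proper over $S$, and being cohomologically smooth over $S$ are all local on $S$ for the $v$-topology, and that the slope hypotheses in (1) and (2) are imposed fibrewise over geometric points of $S$. Hence, by upper semi-continuity of the slope polygon \cite[Theorem~7.4.5]{KL}, I may pass to a $v$-cover of $S$ over which $\mathcal{E}$ carries a filtration with graded pieces of the form $\mathcal{O}_{X_{S}}(\lambda_{i})^{\oplus m_{i}}$ with $\lambda_{i} \in \mathbb{Q}$ constant; under the hypothesis of (1), resp. (2), all $\lambda_{i}$ are $> 0$, resp. $< 0$.

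The base cases are the rank-one pieces $\mathcal{O}_{X_{S}}(\lambda)$. Vanishing of $H^{0}$ for $\lambda < 0$ and of $H^{1}$ for $\lambda \geq 0$ is checked on geometric points and reduces to the classical cohomology of semistable bundles on $X_{F}$, so in these ranges $\mathcal{H}^{0}(\mathcal{O}(\lambda)) = S$, resp. $\mathcal{H}^{1}(\mathcal{O}(\lambda)) = S$; moreover $\mathcal{H}^{0}(\mathcal{O}_{X_{S}}) = \underline{\mathbb{Q}_p}_{S}$, a partially proper locally spatial diamond which is not cohomologically smooth. The one substantial input is that for $\lambda > 0$ the space $\mathcal{H}^{0}(\mathcal{O}(\lambda))$, and for $\lambda < 0$ the space $\mathcal{H}^{1}(\mathcal{O}(\lambda))$, are the positive, resp. negative, Banach--Colmez spaces of \cite{LB} --- perfectoid analogues of open balls --- and as such are partially proper locally spatial diamonds that are cohomologically smooth over $S$; I would simply cite \cite{LB} and \cite[Chapter~II]{FS} for this rather than reprove it.

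To propagate these properties I would use that a short exact sequence $0 \to \mathcal{E}' \to \mathcal{E} \to \mathcal{E}'' \to 0$ of bundles on $X_{S}$, with $\mathcal{E}'$ the top graded piece of the filtration when the pieces are ordered by decreasing slope, yields after pushforward along $X_{S} \to S$ an exact sequence of $v$-sheaves $0 \to \mathcal{H}^{0}(\mathcal{E}') \to \mathcal{H}^{0}(\mathcal{E}) \to \mathcal{H}^{0}(\mathcal{E}'') \xrightarrow{\delta} \mathcal{H}^{1}(\mathcal{E}') \to \mathcal{H}^{1}(\mathcal{E}) \to \mathcal{H}^{1}(\mathcal{E}'') \to 0$. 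For $\mathcal{H}^{0}$: with slopes ordered decreasingly the top piece $\mathcal{E}'$ either has slope $\geq 0$, so $H^{1}(\mathcal{E}') = 0$ universally and $\mathcal{H}^{0}(\mathcal{E}) \to \mathcal{H}^{0}(\mathcal{E}'')$ is a torsor under the group $v$-sheaf $\mathcal{H}^{0}(\mathcal{E}')$, or else $\mathcal{E}$ is entirely of negative slope and $\mathcal{H}^{0}(\mathcal{E}) = S$; for $\mathcal{H}^{1}$ in case (2) all slopes are $< 0$, so $H^{0}(\mathcal{E}') = H^{0}(\mathcal{E}'') = 0$ and $\mathcal{H}^{1}(\mathcal{E}) \to \mathcal{H}^{1}(\mathcal{E}'')$ is a torsor under $\mathcal{H}^{1}(\mathcal{E}')$. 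Since being a partially proper locally spatial diamond is stable under fiber products and under passage to torsors, an induction on $rank(\mathcal{E})$ from the base cases gives the representability statements in (1) and (2). When additionally all slopes are $> 0$ in (1), resp. $< 0$ in (2), the groups $\mathcal{H}^{0}(\mathcal{E}')$, resp. $\mathcal{H}^{1}(\mathcal{E}')$, occurring are positive, resp. negative, Banach--Colmez spaces, hence cohomologically smooth; as a torsor under a cohomologically smooth group $v$-sheaf is cohomologically smooth and cohomological smoothness is stable under composition (Lemma \ref{lemma: cohdimadd}), the same induction yields cohomological smoothness of $\mathcal{H}^{0}(\mathcal{E}) \to S$, resp. $\mathcal{H}^{1}(\mathcal{E}) \to S$.

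The main obstacle is the base case itself: the fact that $\mathcal{H}^{0}(\mathcal{O}(\lambda))$ for $\lambda > 0$, and $\mathcal{H}^{1}(\mathcal{O}(\lambda))$ for $\lambda < 0$, are genuinely cohomologically smooth locally spatial diamonds rests on the explicit perfectoid-ball description of Banach--Colmez spaces and is the technical heart of \cite{LB} and \cite[Chapter~II]{FS}. Beyond citing it, the only point requiring care in the d\'evissage is verifying that after the $v$-cover the pushforward long exact sequences above are exact as sequences of $v$-sheaves, which --- with the graded pieces ordered by decreasing slope so the top piece has vanishing $H^{1}$ --- reduces to the compatibility of $R\pi_{*}$ with base change for positive semistable bundles on the Fargues--Fontaine curve.
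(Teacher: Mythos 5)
This proposition is cited from Fargues--Scholze (\cite[Proposition~II.2.6, II.3.5]{FS}) and the paper does not reproduce a proof, so there is no in-paper argument to compare against; I can only evaluate your attempt on its own terms. The skeleton --- d\'evissage from the rank-one Banach--Colmez spaces $\mathcal{H}^{0}(\mathcal{O}(\lambda))$, $\mathcal{H}^{1}(\mathcal{O}(\lambda))$ via torsor steps, composition, and the cited perfectoid-ball description as the base case --- is sound in spirit. But your reduction step has a genuine gap. You assert that by upper semi-continuity of the slope polygon \cite[Theorem~7.4.5]{KL} you may pass to a $v$-cover of $S$ over which $\mathcal{E}$ acquires a filtration with graded pieces $\mathcal{O}_{X_{S}}(\lambda_{i})^{\oplus m_{i}}$ and $\lambda_{i}$ constant. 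Upper semi-continuity only yields a locally closed stratification of $|S|$ by Harder--Narasimhan type; the polygon does not become constant after pullback along a $v$-cover $\tilde{S} \rightarrow S$, because $|\tilde{S}| \rightarrow |S|$ is surjective and the polygon at a point of $\tilde{S}$ equals its value at the image point. So the Harder--Narasimhan filtration is not $v$-locally defined in general (and where it is, splitting it and trivializing the semistable graded pieces as $\mathcal{O}(\lambda)^{\oplus m}$ $v$-locally is a further non-automatic step), and the inductive step as written does not go through for a general $\mathcal{E}$, nor even for $\mathcal{E}$ with all slopes of fixed sign, since the polygon can still jump within the positive (or negative) region.

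The standard d\'evissage, which the paper itself models in the proof of Proposition \ref{prop: repofpicard}, sidesteps the Harder--Narasimhan filtration entirely: by (dualizing) \cite[Proposition~6.2.4]{KL} one finds for $n \gg 0$ an injection $\mathcal{E} \hookrightarrow \mathcal{O}_{X_{S}}(n)^{\oplus m}$ with vector-bundle cokernel $\mathcal{G}$ of large positive slopes, and runs the long exact sequence of $v$-sheaves. For negative $\mathcal{E}$ this exhibits $\mathcal{H}^{1}(\mathcal{E})$ as the quotient $\mathcal{H}^{0}(\mathcal{G})/\mathcal{H}^{0}(\mathcal{O}(n)^{\oplus m})$ of positive Banach--Colmez spaces, giving both representability and smoothness; dual presentations handle $\mathcal{H}^{0}$. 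Your base cases, the torsor argument, Lemma \ref{lemma: cohdimadd}, and the flag about compatibility of $R\pi_{*}$ with base change are all appropriate --- it is specifically the ``$v$-cover to split HN filtration with trivialized graded pieces'' step that needs to be replaced by the embedding trick.
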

As a consequence of this, we can formally deduce the following Corollary from Remarks \ref{rem: openess} and \ref{rem: uppersmicont}. 
\begin{corollary}{\label{cor: extgeorep}}
Let $\mathcal{F}$ and $\mathcal{E}$ be two bundles on the relative algebraic Fargues-Fontaine curve $X_{S}$ for $S \in \Perf$ an affinoid perfectiod space. Then the following is true.
\begin{enumerate}
    \item The moduli spaces $\mathcal{I}nj(\mathcal{F},\mathcal{E}) \rightarrow S$ and $\mathcal{S}urj(\mathcal{F},\mathcal{E}) \rightarrow S$ define locally spatial partially proper diamonds over $S$. Moreover, if the slopes of $\mathcal{F}$ are strictly less than the slopes of $\mathcal{E}$ after pulling back to each geometric point then these diamonds are cohomologically smooth.
    \item If $\mathcal{F}$ and $\mathcal{E}$ are bundles such that the slopes of $\mathcal{F}$ are strictly greater than the slopes of $\mathcal{E}$ then $\mathcal{E}xt^{1}(\mathcal{F},\mathcal{E}) \rightarrow S$ defines a locally spatial, partially proper, and cohomologically smooth diamond over $S$. Moreover, if we assume that $\mathcal{F}$ and $\mathcal{E}$ are of constant rank on $X_{S}$ and let $\mathcal{G}$ be a semistable vector bundle on $X_{F}$ of rank equal to $rank(\mathcal{F}) + rank(\mathcal{E})$ then the same is true for the open sub-functor $\mathcal{E}xt^{1}(\mathcal{F},\mathcal{E})^{\mathcal{G}}$.
\end{enumerate}
\end{corollary}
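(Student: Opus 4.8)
The plan is to reduce everything to Proposition~\ref{prop: repofBC} by recognizing each of $\mathcal{I}nj(\mathcal{F},\mathcal{E})$, $\mathcal{S}urj(\mathcal{F},\mathcal{E})$, and $\mathcal{E}xt^{1}(\mathcal{F},\mathcal{E})^{\mathcal{G}}$ as an open sub-functor of a Banach-Colmez space attached to $\mathcal{F}^{\vee}\otimes\mathcal{E}$, after translating the slope hypotheses on $\mathcal{F}$ and $\mathcal{E}$ into sign conditions on the slopes of $\mathcal{F}^{\vee}\otimes\mathcal{E}$. So the first step is to record the slope arithmetic: over a geometric point $\Spa(F)\to S$, the Fargues--Fontaine classification lets us write $\mathcal{F}_{F}\simeq\bigoplus_{i}\mathcal{O}(\mu_{i})$ and $\mathcal{E}_{F}\simeq\bigoplus_{j}\mathcal{O}(\lambda_{j})$, and since the tensor product of semistable bundles on $X_{F}$ is semistable of slope the sum of the two slopes, the Harder--Narasimhan slopes of $(\mathcal{F}^{\vee}\otimes\mathcal{E})_{F}$ are exactly the numbers $\lambda_{j}-\mu_{i}$. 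Consequently ``every slope of $\mathcal{F}$ is strictly less than every slope of $\mathcal{E}$ at each geometric point of $S$'' is equivalent to ``$\mathcal{F}^{\vee}\otimes\mathcal{E}$ has strictly positive slopes at each geometric point of $S$'', and symmetrically ``every slope of $\mathcal{F}$ strictly greater than every slope of $\mathcal{E}$'' corresponds to ``$\mathcal{F}^{\vee}\otimes\mathcal{E}$ has strictly negative slopes''; this is how I would read the slope hypotheses in the statement.

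For part~(1): combining the identification $\mathcal{H}om(\mathcal{F},\mathcal{E})\simeq\mathcal{H}^{0}(\mathcal{F}^{\vee}\otimes\mathcal{E})$ with Proposition~\ref{prop: repofBC}(1) shows that $\mathcal{H}om(\mathcal{F},\mathcal{E})\to S$ is a locally spatial partially proper diamond, and moreover cohomologically smooth once $\mathcal{F}^{\vee}\otimes\mathcal{E}$ has strictly positive slopes geometrically, i.e. once the slopes of $\mathcal{F}$ are strictly below those of $\mathcal{E}$. By Remark~\ref{rem: openess} the loci $\mathcal{I}nj(\mathcal{F},\mathcal{E})$ and $\mathcal{S}urj(\mathcal{F},\mathcal{E})$ are open sub-functors of this space, so it remains to observe that an open sub-functor of a locally spatial diamond is again a locally spatial diamond, that partial properness over $S$ is inherited by such open sub-functors (discussed below), and that cohomological smoothness over $S$ is inherited because an open immersion is \'etale, hence cohomologically smooth, so the composite to $S$ stays cohomologically smooth by Lemma~\ref{lemma: cohdimadd}. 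For part~(2): when the slopes of $\mathcal{F}$ are strictly above those of $\mathcal{E}$, the slope arithmetic shows $\mathcal{F}^{\vee}\otimes\mathcal{E}$ has strictly negative slopes at each geometric point of $S$, so by Proposition~\ref{prop: repofBC}(2) and the identification $\mathcal{E}xt^{1}(\mathcal{F},\mathcal{E})\simeq\mathcal{H}^{1}(\mathcal{F}^{\vee}\otimes\mathcal{E})$, the functor $\mathcal{E}xt^{1}(\mathcal{F},\mathcal{E})\to S$ is a locally spatial, partially proper, cohomologically smooth diamond over $S$; and when $\mathcal{F}$ and $\mathcal{E}$ have constant rank and $\mathcal{G}$ is semistable, Remark~\ref{rem: uppersmicont} shows $\mathcal{E}xt^{1}(\mathcal{F},\mathcal{E})^{\mathcal{G}}\subset\mathcal{E}xt^{1}(\mathcal{F},\mathcal{E})$ is open, and the same inheritance argument as in part~(1) applies.

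The one point that is not purely formal -- and which I expect to be the main (if minor) obstacle -- is that partial properness over $S$ must be shown to descend to these particular open sub-functors, since open immersions are not partially proper in general. I would handle this via the valuative criterion: an open immersion is separated, so the composite to $S$ remains separated; and given $\Spa(C,C^{+})\to S$ together with a lift of the generic point $\Spa(C,\mathcal{O}_{C})$ into the open sub-functor, partial properness of the ambient $\mathcal{H}^{0}(\mathcal{F}^{\vee}\otimes\mathcal{E})$ (resp. $\mathcal{H}^{1}(\mathcal{F}^{\vee}\otimes\mathcal{E})$) yields a unique lift over $\Spa(C,C^{+})$, and one checks that it factors through the open sub-functor because the defining conditions -- surjectivity of $\mathcal{F}_{T}\to\mathcal{E}_{T}$, fiberwise injectivity, or the isomorphism type of the central term of the extension -- are detected on geometric points, while every geometric point of $\Spa(C,C^{+})$ factors through its generic point $\Spa(C,\mathcal{O}_{C})$; thus the condition, holding after pullback to $\Spa(C,\mathcal{O}_{C})$ by hypothesis, holds after pullback to any geometric point of $\Spa(C,C^{+})$ and hence over $\Spa(C,C^{+})$ itself. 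Everything else is a direct application of Proposition~\ref{prop: repofBC} together with the openness statements of Remarks~\ref{rem: openess} and~\ref{rem: uppersmicont}, so beyond this bookkeeping point I do not anticipate any real difficulty.
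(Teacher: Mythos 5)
Your proof is correct and follows the same route the paper has in mind: the paper simply states that the corollary ``formally'' follows from Proposition~\ref{prop: repofBC} together with Remarks~\ref{rem: openess} and~\ref{rem: uppersmicont}, and you carry out exactly that deduction — translate the slope hypotheses into sign conditions on $\mathcal{F}^{\vee}\otimes\mathcal{E}$, identify the ambient Banach--Colmez space, and restrict to the open sub-functor, using Lemma~\ref{lemma: cohdimadd} (open immersion $\Rightarrow$ cohomologically smooth $\Rightarrow$ composite still cohomologically smooth) for smoothness.

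The one place where you go beyond what the paper writes is partial properness, and you are right to flag it: an open immersion is not partially proper in general, so inheritance of partial properness by these open sub-functors is not automatic. Your valuative-criterion argument is correct, and the key fact underlying it — that any geometric point $\Spa(C',\mathcal{O}_{C'})\to\Spa(C,C^{+})$ factors through the generic open $\Spa(C,\mathcal{O}_C)\hookrightarrow\Spa(C,C^{+})$, because the pulled-back valuation on $C$ is rank one — does hold. One could also argue slightly more directly by observing that the (schematic) Fargues--Fontaine curve $X_{T}$ for $T=\Spa(C,C^{+})$ depends only on $C$ and not on $C^{+}$, so a $\Spa(C,C^{+})$-point of the ambient $\mathcal{H}^{0}$ or $\mathcal{H}^{1}$ carries literally the same bundle-theoretic data as its restriction to $\Spa(C,\mathcal{O}_{C})$; the open conditions are therefore insensitive to passing from $\Spa(C,\mathcal{O}_{C})$ to $\Spa(C,C^{+})$. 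Either way, your treatment closes a small gap that the paper's ``formally'' elides.
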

We now discuss the situation over a geometric point. 
\subsection{Banach-Colmez Spaces over a Geometric Point}
For notational simplicity, we work throughout this section over an algebraically closed perfectoid field $F$ which is the tilt of $C$ (though analogous claims of course hold over any algebraically closed perfectoid field $F$), the completed algebraic closure of a finite extension $E/\mathbb{Q}_{p}$ of the $p$-adic integers. The key advantage of working over a geometric point like $\Spa(F)$ is that we can talk about the HN-filtration or Harder-Narasimhan fitlration on the bundle. Namely, by \cite[Theorem~2.19]{FS}, a vector bundle $\mathcal{E}$ on $X_{F}$ carries a filtration $0 = \mathcal{E}_{0} \subset \mathcal{E}_{1} \subset \cdots \subset \mathcal{E}_{k} = \mathcal{E}$, such that $\mathcal{E}_{i}/\mathcal{E}_{i - 1}$ is semi-stable of slope $\lambda_{i}$ and $\lambda_{1} > \cdots > \lambda_{k}$. Moreover, the extension groups describing this filtration vanish, so the filtration splits. We can also classify the graded pieces. In particular, for every $\lambda \in \mathbb{Q}$, there exists a unique stable bundle $\mathcal{O}(\lambda)$ on $X$ of slope $\lambda$, and the semistable bundles of slope $\lambda$ are precisely the direct sums $\mathcal{O}(\lambda)^{\oplus n}$. Therefore, we can make the following definition.
\begin{definition}{\label{def: HNfilt}}
For $\mathcal{E}$ a bundle on $X_{F}$ as above, with Harder-Narasimhan filtration 
\[ 0 = \mathcal{E}_{0} \subset \mathcal{E}_{1} \subset \cdots \mathcal{E}_{k} = \mathcal{E} \]
for any $\lambda \in \mathbb{Q}$, we define $\mathcal{E}^{\geq \lambda}$ (resp. $\mathcal{E}^{> \lambda}$) to be the subbundle given by $\mathcal{E}_{i}$ for the largest value of $i$ such that the slope of $\mathcal{E}_{i}/\mathcal{E}_{i - 1}$ is  $\geq \lambda$ (resp. $> \lambda$). We set $\mathcal{E}^{< \lambda} = \mathcal{E}/\mathcal{E}^{\geq \lambda}$ and $\mathcal{E}^{\leq \lambda} = \mathcal{E}/\mathcal{E}^{> \lambda}$. 
\end{definition}
As noted above, the Harder-Narasimhan filtration is split; therefore, for any vector bundle $\mathcal{E}$ on $X_{F}$, we have a decomposition $\mathcal{E} \simeq \mathcal{E}^{< 0} \oplus \mathcal{E}^{\geq 0}$, and we can further decompose $\mathcal{E}^{\geq 0}$ as $\mathcal{E}^{= 0} \oplus \mathcal{E}^{> 0}$, where $\mathcal{E}^{= 0}$ is the semistable direct summand of slope $0$. We can see that $H^{0}(X_{F},\mathcal{E}) \simeq H^{0}(X_{F},\mathcal{E}^{\geq 0})$ and $H^{1}(X_{F},\mathcal{E}) \simeq H^{1}(X_{F},\mathcal{E}^{< 0})$, and we obtain the analogous result for the associated Banach-Colmez spaces over $\Spa(F)$. In particular, it follows, by Proposition \ref{prop: repofBC}, that $\mathcal{H}^{0}(\mathcal{E}) \rightarrow \Spd(F)$ and $\mathcal{H}^{1}(\mathcal{E}) \rightarrow \Spd(F)$ are always locally spatial partially proper diamonds over $\Spa(F)$ for any bundle $\mathcal{E}$, as opposed to the case of a general base described in the previous section. Moreover, we see that $\mathcal{H}^{1}(\mathcal{E})$ is always cohomologically smooth over $\Spa(F)$. We can refine this a bit further. For this, consider the perfectoid space 
\[\widetilde{\mathbb{D}}^n := \Spa(F^{\circ}[[T_1^{1/p^{\infty}},\ldots, T_n^{1/p^{\infty}}]], F^{\circ}[[T_1^{1/p^{\infty}},\ldots, T_n^{1/p^{\infty}}]]) \times_{\Spa(F^{\circ}, F^{\circ})} \Spa(F,F^{\circ})\]
given by the perfectoid open unit ball. 
\begin{proposition}{\label{prop: repovergeompoint1}}
Let $\mathcal{E}$ be any vector bundle on $X_{F}$. The following is true. 
\begin{enumerate}
    \item The Banach-Colmez space $\mathcal{H}^0(\mathcal{E}) \rightarrow \Spd(F)$ is a locally spatial partially proper diamond. If $\mathcal{E}$ has positive slopes then we can find an isomorphism:
    \[ \mathcal{H}^0(\mathcal{E})  \simeq \widetilde{\mathbb{D}}^d/\underline{E}^m, \]
    where $d = \deg(\mathcal{E})$ and $\underline{E}^m$ is a locally profinite group acting freely on $\widetilde{\mathbb{D}}^d$. In general, there exist isomorphisms: $\mathcal{H}^0(\mathcal{E}) \simeq \mathcal{H}^{0}(\mathcal{E}^{= 0}) \times_{\Spd(F)} \mathcal{H}^0(\mathcal{E}^{>0})$ and $\mathcal{H}^{0}(\mathcal{E}^{= 0}) \simeq \underline{E}^{n}$, where $n$ is the rank of the semistable summand of slope zero. It follows that $\mathcal{H}^{0}(\mathcal{E})$ is cohomologically smooth of pure $\ell$-dimension equal to $\deg(\mathcal{E})$ if and only if $\mathcal{E}$ has no summand of slope $0$.
    \item If $\mathcal{E}$ has negative slopes then the diamond $\mathcal{H}^1(\mathcal{E}) \rightarrow \Spd(F)$ is locally spatial partially proper and cohomologically smooth of pure $\ell$-dimension $-\deg(\mathcal{E})$.
\end{enumerate}
\end{proposition}
\begin{proof}
The explicit presentation in part (1) is precisely \cite[Prop~3.3.2]{BFH+}. The if direction of the claim on smoothness follows from Proposition \ref{prop: repofBC} (1).  The only if direction follows since $f^{!}(\mathbb{F}_{\ell})$ for $f: \underline{E}^{n} \rightarrow \Spd(F)$ is given by the set of $\mathbb{F}_{\ell}$-valued distributions on $\underline{E}^{n}$, which is far from being constant (cf. \cite[Remark~IV.1.10]{FS}). It remains to establish part (2), we have already discussed the first part of the claim above. It remains to show the claim on the $\ell$-dimension. Using the identification
\[\mathcal{H}^1(\mathcal{F} \oplus \mathcal{G}) = \mathcal{H}^1(\mathcal{F}) \times_{\Spd(F)} \mathcal{H}^1(\mathcal{G}), \]
we may assume that $\mathcal{E}$ is stable of slope $\lambda = \frac{s}{r} \in \mathbb{Q}$ for $s,r \in \mathbb{Z}$ with $(s,r) = 1$. Let $i_{\infty}: \{\infty\} \rightarrow X_{F}$ denote the inclusion of the closed point of $X_{F}$ defined by the untilt $C$ of $F$. Multiplying the structure sheaf $\mathcal{O}$ by the local parameter corresponding to this point gives rise to an exact sequence:
\[ 0 \rightarrow \mathcal{O} \rightarrow \mathcal{O}(1)  \rightarrow (i_{\infty})_*(C) \rightarrow 0. \]
We then tensor it by $\mathcal{O}(\lambda)$ to obtain an exact sequence:
\[ 0 \rightarrow \mathcal{O}(\lambda) \rightarrow \mathcal{O}(\lambda + 1) \rightarrow i_{\infty*}(C)^{\oplus r} \rightarrow 0. \]
Assume first that $-1 \leq \lambda < 0$ then, taking cohomology, we get a sequence of pro-\'etale sheaves:
\[0 \rightarrow \mathcal{H}^0(\mathcal{O}(\lambda + 1)) \rightarrow (\mathbb{A}_C^r)^{\diamond} \rightarrow \mathcal{H}^1(\mathcal{O}(\lambda)) \rightarrow 0. \] 
Using Part (1), Lemma \ref{lemma: cohdimadd}, \cite[Proposition~27.5]{Ecod}, \cite[Proposition~23.13]{Ecod} for the representability, and \cite[Proposition~24.2]{Ecod} if $\lambda = -1$, this implies $\mathcal{H}^1(\mathcal{O}(\lambda))$ is representable in locally spatial diamonds and cohomologically smooth of the desired $\ell$-dimension. Similarly, if $\lambda < -1$, one gets via taking cohomology an exact sequence:
\[ 0 \rightarrow (\mathbb{A}_C^r)^{\diamond} \rightarrow \mathcal{H}^1(\mathcal{O}(\lambda)) \rightarrow \mathcal{H}^1(\mathcal{O}(\lambda + 1)) \rightarrow 0, \]
whereby the result follows from another application of Lemma \ref{lemma: cohdimadd}, \cite[Proposition~27.5]{Ecod}, \cite[Proposition~23.13]{Ecod} for the representability, and induction.
\end{proof}
We now conclude this section with some discussion of length one complexes on $X_{S}$ and their associated "Picard $v$-groupoids".
\subsection{The Picard $v$-Groupoid and Length One Complexes}
For the proof of our main Theorem, we will want to study various invariants and geometric objects attached to a length one complex of vector bundles on $X_{S}$, the relative algebraic Fargues-Fontaine curve attatched to an affinoid perfectoid space $S$. In particular, we consider a two-term complex $\mathcal{E}^{*} := \{\mathcal{E}^{-1} \ra \mathcal{E}^{0}\}$ of bundles on $X_{S}$, where $\mathcal{E}^{i}$ sits in cohomological degree $i$ and write $|\mathcal{E}^{*}| := \Cofiber(\mathcal{E}^{-1} \ra \mathcal{E}^{0})$ for its cone viewed as an object in the derived category of quasi-coherent sheaves on $X_{S}$. We note that, since we are working with affinoid perfectoid spaces as test objects, it easily follows that $R\Gamma(X_{T},\mathcal{E}^{*}_{T})$ is concentrated in degrees $[-1,1]$ for all $T \in \Perf_{S}$ (See the proof of \cite[Proposition~8.7.13]{KL} or \cite[Section~II.2]{FS}). Here $\mathcal{E}^{*}_{T}$ denotes the pullback of $X_{T}$. We now have the following.
\begin{definition}{\label{defn: EulerPoincareCharacteristicofTwoTermComplex}}
For $S \in \Perf$, let $\mathcal{E}^{*} := \{\mathcal{E}^{-1} \xrightarrow{d} \mathcal{E}^{0}\}$ be a length one complex of vector bundles on $X_{S}$. We write $|\mathcal{E}^{*}| := \Cofiber(\mathcal{E}^{-1} \rightarrow \mathcal{E}^{0}\}$ for its cone realized as an object in the derived category of quasi-coherent sheaves on $X_{S}$, and define the following. 
\begin{enumerate}
\item For $i \in \bb{Z}$, we let $H^{i}(|\mathcal{E}^{*}|)$ denote the $i$th cohomology sheaf of $|\mathcal{E}^{*}|$ with respect to the standard $t$-structure on quasi-coherent $X_{S}$-modules. We note that this is only non-trivial in degrees $-1$ and $0$ and we have equalities
\[ \Coker(d) = H^{0}(|\mathcal{E}^{*}|) \]
and 
\[ \Ker(d) =  H^{-1}(|\mathcal{E}^{*}|) \]
for the cohomology sheaves in these degrees.
\item If $S = \Spa(F)$ is a geometric point, we write
\[ \chi(\mathcal{E}^{*}) := \deg(H^{0}(|\mathcal{E}^{*}|)) - \deg(H^{-1}(|\mathcal{E}^{*}|)), \]
where we note that, since $X_{F}$ is a Dedekind scheme (\cite[Theorem~13.5.3 (1)]{SW}), it has a well-behaved abelian category of coherent sheaves. In particular, by the description of $H^{i}(|\mathcal{E}^{*}|)$ for $i = -1,0$ provided above, these are coherent $\mathcal{O}_{X_{F}}$-modules. Moreover, they have well-defined notion of degree by the completeness property of $X_{F}$ \cite[Proposition~2.1]{BFH+}. We refer to $\chi(\mathcal{E}^{*})$ as the Euler-Poincar\'e characteristic of $\mathcal{E}^{*}$.
\end{enumerate}
\end{definition}
\begin{remark}{\label{rem: MZsmisavsheaf}}
We warn the reader that it is not true that that the formation of $H^{-1}(|\mathcal{E}^{*}|)$ respects base-change with respect to morphisms of test objects, as the pullback of the left exact sequence of $\mathcal{O}_{X_{S}}$-modules
\[ 0 \ra H^{-1}(|\mathcal{E}^{*}|) \ra \mathcal{E}^{-1} \ra \mathcal{E}^{0} \]
may not in general be left exact. However, it is true that the pullback is always right exact, so that the formation of $H^{0}(|\mathcal{E}^{*}|)$ respects pullback. In particular, this will tell us that $\mathcal{M}_{[Z/H]}^{\mathrm{sm}}$, as defined in Definition \ref{defn: MZHsmooth}, gives rise to a well-behaved sub $v$-stack of $\mathcal{M}_{[Z/H]}$. More precisely, we have that $\mathcal{M}_{[Z/H]}^{\mathrm{sm}}$ is a $v$-stack and there is always a monomorphism $\mathcal{M}_{[Z/H]}^{\mathrm{sm}} \ra \mathcal{M}_{[Z/H]}$ of $v$-stacks. However, we are unsure in general if this is an open immersion.
\end{remark}
We record the following basic lemma on these Euler-Poincar\'e characteristics for future use.
\begin{lemma}{\label{lemma: EulerPoincareAdditive}}
Suppose we have a distinguished triangle
\[ \xrightarrow{+1} A \ra B \ra C \xrightarrow{+1}, \]
in the derived category of quasi-coherent sheaves on $X_{F}$ for $F$ an algebraically closed complete non-archimedean field, where $A,B,C$ are represented by length one complexes of vector bundles as above. Then we have an equality
\[ \chi(B) = \chi(A) + \chi(C) \]
of Euler-Poincare characteristics.
\end{lemma}
\begin{proof}
This follows from the long exact sequence of cohomology groups in the standard $t$-structure attached to the distinguished triangle (i.e the snake lemma), together with the fact that degree function on coherent sheaves on $X_{F}$ is additive in short exact sequences.
\end{proof}
We now want to endow the quantity $\chi(\mathcal{E}^{*})$ with some more geometric meaning. As we saw in the proof of Proposition \ref{prop: repovergeompoint1}, the degree of a vector bundle is related to the $\ell$-dimension of the Banach-Colmez spaces attached to it. This motivates the claim on the $\ell$-dimension provided in Theorem \ref{thm: jacobiancriterion}, since as mentioned in the introduction one should think of these Banach-Comez spaces as linearizations of the moduli space $\mathcal{M}_{Z}$.

Similarly, motivated by Proposition \ref{prop: PicardGroupoidsArtinSStacks}, we will show that the quantity $\chi(\mathcal{E}^{*})$ computes the $\ell$-dimension of a certain Picard $v$-groupoid attached to the complex $\mathcal{E}^{*}$, and, as discussed in Remark \ref{rem: Picardvgroupoidlocalmodel}, this geometric object should be the linearization of the moduli space $\mathcal{M}_{[Z/H]}$, as one can see directly in certain examples (cf. Example \ref{ex: negbcspace}).

The basic definition is the following.
\begin{definition}{\label{defn: picardvgroup}}
For $S \in \Perf$ and a two-term complex of vector bundles $\{\mathcal{E}^{-1} \ra \mathcal{E}^{0}\}$, we define the following. 
\begin{enumerate}
\item We write $\mathcal{H}^{i}(|\mathcal{E}^{*}|)$ for the presheaf on $\Perf_{S}$ sending $T \in \Perf_{S}$ to the cohomology of $H^{i}(R\Gamma(X_{T},|\mathcal{E}^{*}_{T}|))$, where $\mathcal{E}^{*}_{T}$ denotes the pullback of $\mathcal{E}^{*}$ along $X_{T} \ra X_{S}$. 
\item We write $\mathcal{P}(|\mathcal{E}^{*}|)$ for the $v$-prestack sending $T \in \Perf_{S}$ to the groupoid attached to the length one complex of abelian groups $\tau_{\leq 0}R\Gamma(X_{T},|\mathcal{E}^{*}_{T}|)$, as described in \S \ref{subsec: ReviewofTangentComplex}.
\end{enumerate}
We note that (2) can also alternatively be described as a prestack quotient $[\mathcal{H}^{0}(|\mathcal{E}^{*}|)/\mathcal{H}^{-1}(|\mathcal{E}^{*}|)]$, where $\mathcal{H}^{0}(|\mathcal{E}^{*}|)$ has the trivial action by $\mathcal{H}^{-1}(|\mathcal{E}^{*}|)$. We refer to this as the Picard $v$-groupoid.
\end{definition} 
We start with the first basic Proposition on these geometric objects.
\begin{proposition}{\label{prop: repofpicard}}
Let $\mathcal{E}$ be a vector bundle on $X_{S}$ then the Picard $v$-groupoid
\[ \mathcal{P}(\mathcal{E}[1]) \simeq [\mathcal{H}^{1}(\mathcal{E})/\mathcal{H}^{0}(\mathcal{E})] \rightarrow S \]
defines an Artin $v$-stack cohomologically smooth over $S$. If $S = \Spa(F)$ is a geometric point then this is pure of $\ell$-dimension $\chi(\mathcal{E}[1]) = -\deg(\mathcal{E}) = -\deg(H^{-1}(\mathcal{E}[1]))$.
\end{proposition}
\begin{proof}
By dualizing the statement of \cite[Proposition~6.2.4]{KL}, we can find an injection of vector bundles 
\[ \mathcal{E} \hookrightarrow \mathcal{O}_{X_{S}}^{\oplus m}(n) \]
for all $n$ sufficiently large and fixed $m > 0$. By choosing $n$ to be positive, we can arrange that the slopes of the cokernel $\mathcal{G}$ are positive after pulling back to any geometric point of $S$. Now it suffices to check the desired statement \'etale-locally on $S$. By \cite[Proposition~II.3.4 (iii)]{FS}, we can arrange, after replacing $S$ with an \'etale covering $\tilde{S} \rightarrow S$, that $H^{1}(X_{S},\mathcal{O}_{X_{S}}^{\oplus m}(n)) = 0$. We now look at the short exact sequence
\[ 0 \ra \mathcal{E} \ra \mathcal{O}_{X_{S}}(n)^{\oplus m} \ra \mathcal{G} \ra 0. \]
and the associated distinguished triangle
\[ \xrightarrow{+1} \mathcal{E} \ra \mathcal{O}_{X_{S}}(n)^{\oplus m} \ra \mathcal{G} \xrightarrow{+1} \]
in the derived category of quasi-coherent $X_{S}$-modules. This defines for us a Cartesian diagram
\begin{equation}{\label{eqn: fiberproduct}}
 \begin{tikzcd}
\mathcal{O}_{X_{S}}(n)^{\oplus m} \arrow[r] \arrow[d] & 0 \arrow[d]  \\
\mathcal{G} \arrow[r] & \mathcal{E}[1]
\end{tikzcd} 
\end{equation}
in the derived category of quasi-coherent $\mathcal{O}_{X_{S}}$-modules.

By applying $\tau_{\leq 0}R\Gamma(X_{T},-)$ for varying $T \in \Perf_{S}$ and passing to the associated Picard $v$-groupoids, the top diagram defines a Cartesian diagram
\[ \begin{tikzcd}
\mathcal{H}^{0}(\mathcal{O}_{X_{S}}(n)^{\oplus m}) \arrow[r] \arrow[d] & S \arrow[d]  \\
\mathcal{H}^{0}(\mathcal{G}) \arrow[r] & \mathcal{P}(\mathcal{E}[1])
\end{tikzcd} \] 
of prestacks on $\Perf_{S}$, where the right vertical arrow is given by the zero section. Here we have implicitly used the vanishing of $H^{1}(X_{S},\mathcal{O}_{X_{S}}^{\oplus m}(n))$ (which in turn implies the vanishing of $H^{1}(X_{S},\mathcal{G})$ by the long exact sequence). Using Proposition \ref{prop: repofBC} (1) and the group structure on $\mathcal{P}(\mathcal{E}[1])$, it is easy to see that this endows $\mathcal{P}(\mathcal{E}[1])$ with the structure of a Artin $v$-stack cohomologically smooth over $S$, where the cohomomologically smooth atlas is given by the map $\mathcal{H}^{0}(\mathcal{G}) \ra \mathcal{P}(\mathcal{E}[1])$ appearing above. The claim on the $\ell$-dimension easily follows from Lemma \ref{lemma: cohdimadd} and Proposition \ref{prop: repovergeompoint1} (1), where we note that $\deg(\mathcal{G}) - \deg(\mathcal{O}_{X_{F}}^{\oplus m}(n)) = -\deg(\mathcal{E})$, since $\deg(-)$ is additive in short exact sequences.
\end{proof}
While these stacky Banach-Colmez spaces might appear a bit esoteric, they actually appear quite naturally. In particular, using the above smoothness results we can verify the following.
\begin{proposition}{\cite[Lemma~4.1]{AL}}{\label{prop: projcohsmooth}}
Let $G$ be a connected reductive group over $\mathbb{Q}_{p}$ with parabolic subgroup $P \subset G$. Let $M$ denote the Levi factor of $P$. Consider the map $\mathfrak{q}: \Bun_{P} \rightarrow \Bun_{M}$ induced by the surjection $P \rightarrow M$. This defines a (non-representable) cohomologically smooth morphism of Artin $v$-stacks. 
\end{proposition}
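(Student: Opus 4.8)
The plan is to exhibit $\mathfrak{q}$ as an iterated fibration in relative Picard $v$-groupoids of vector bundles on the Fargues--Fontaine curve, and then conclude from Proposition~\ref{prop: repofpicard} and Lemma~\ref{lemma: cohdimadd}. Let $U\subset P$ be the unipotent radical, so $M\cong P/U$, and take the descending central series $U = U^{(1)}\supseteq U^{(2)}\supseteq\cdots\supseteq U^{(n+1)}=1$, with $U^{(j+1)}=[U,U^{(j)}]$. Each $U^{(j)}$ is normal in $P$, each $\mathfrak{u}_j:=U^{(j)}/U^{(j+1)}$ is a vector group, and the conjugation action of $P$ on $\mathfrak{u}_j$ factors through $M$ (since $[U,U^{(j)}]\subseteq U^{(j+1)}$), so $\mathfrak{u}_j$ is a finite-dimensional algebraic representation of $M$. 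Passing to the quotients $P/U^{(j)}$ factors $\mathfrak{q}$ as a tower
\[ \Bun_{P}=\Bun_{P/U^{(n+1)}}\xrightarrow{\ g_n\ }\Bun_{P/U^{(n)}}\longrightarrow\cdots\longrightarrow\Bun_{P/U^{(1)}}=\Bun_{M}, \]
where $g_j$ is induced by $P/U^{(j+1)}\twoheadrightarrow P/U^{(j)}$, with kernel the vector group $\mathfrak{u}_j$.

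It then suffices to show each $g_j$ is cohomologically smooth. Given a $(P/U^{(j)})$-bundle $\mathcal{F}$ on $X_T$ with induced $M$-bundle $\mathcal{F}_M$, the deformation theory of lifts of $\mathcal{F}$ along $P/U^{(j+1)}\to P/U^{(j)}$ is governed by the coherent cohomology of $\mathcal{V}_j:=\mathcal{F}_M\times^{M}\mathfrak{u}_j$: the lifts form an $\mathcal{H}^{1}(\mathcal{V}_j)$-torsor with automorphism sheaf $\mathcal{H}^{0}(\mathcal{V}_j)$ (the nonabelian $H^1$-sequence, matching the relative tangent complex $s^{*}T^{*}_{[X_S/P]/[X_S/M]}=\{\mathfrak{u}_{\mathcal{E}}\to 0\}$ computed as in Example~\ref{ex: classifying}, whose only nonzero cohomology sheaves are $\mathcal{H}^{-1}=\mathcal{H}^0(\mathfrak{u}_{\mathcal{E}})$ and $\mathcal{H}^0=\mathcal{H}^1(\mathfrak{u}_{\mathcal{E}})$, the latter being the obstruction space, which vanishes as $X_F$ has coherent cohomological dimension one). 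Base-changing $g_j$ along a cohomologically smooth chart $W\to\Bun_{P/U^{(j)}}$ and pulling the universal bundle back to $X_W$, this identifies $\Bun_{P/U^{(j+1)}}\times_{\Bun_{P/U^{(j)}}}W\to W$ with a torsor under the relative Picard $v$-groupoid $\mathcal{P}(\mathcal{V}_j^{\mathrm{univ}}[1])=[\mathcal{H}^1(\mathcal{V}_j^{\mathrm{univ}})/\mathcal{H}^0(\mathcal{V}_j^{\mathrm{univ}})]\to W$. By Proposition~\ref{prop: repofpicard} (in its evident form over the base $W$) the latter is cohomologically smooth over $W$; since a torsor is $v$-locally split and cohomological smoothness can be checked $v$-locally on the target (\cite[Proposition~23.13]{Ecod}), so is $\Bun_{P/U^{(j+1)}}\times_{\Bun_{P/U^{(j)}}}W\to W$. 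Applying the converse part of Lemma~\ref{lemma: cohdimadd} to the cohomologically smooth surjection $\Bun_{P/U^{(j+1)}}\times_{\Bun_{P/U^{(j)}}}W\to\Bun_{P/U^{(j+1)}}$ then gives cohomological smoothness of $g_j$; the same inductive bookkeeping (a torsor under an Artin $v$-stack over an Artin $v$-stack is an Artin $v$-stack) shows each $\Bun_{P/U^{(j)}}$ is an Artin $v$-stack, with base case $\Bun_M$. Composing via Lemma~\ref{lemma: cohdimadd} yields that $\mathfrak{q}=g_1\circ\cdots\circ g_n$ is cohomologically smooth, and it is non-representable because whenever some $\mathcal{V}_j$, pulled back to a geometric point, has a strictly positive slope, $\mathcal{H}^0(\mathcal{V}_j)$ is a positive-dimensional Banach--Colmez space contributing positive-dimensional automorphisms to a fibre of $g_j$, hence of $\mathfrak{q}$.

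The main obstacle is the identification, over the chart $W$, of the fibre product with a $\mathcal{P}(\mathcal{V}_j^{\mathrm{univ}}[1])$-torsor: concretely one must analyse the stack $\mathcal{M}_{[X_W/\mathcal{U}^{(j)}]}\to W$ of torsors under the twisted group scheme $\mathcal{U}^{(j)}:=\mathcal{E}_M^{\mathrm{univ}}\times^{M,\mathrm{conj}}(U/U^{(j+1)})$ on $X_W$, filter $\mathcal{U}^{(j)}$ by the $M$-equivariant filtration of $U/U^{(j+1)}$, and check that peeling off the bottom layer $\mathfrak{u}_j$ contributes exactly the factor $\mathcal{P}(\mathcal{V}_j^{\mathrm{univ}}[1])$ while the quotient is $\mathcal{M}_{[X_W/\mathcal{U}^{(j-1)}]}$. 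This is the relative, twisted-coefficient version of the dévissage to Theorem~\ref{thm: jacobiancriterion} sketched after Theorem~\ref{thm: stackyjacobi}, with $\mathcal{U}^{(j)}$ in place of $\GL_n$; once it is in place, everything above is formal. An alternative to running the chart argument by hand is to first establish the relative form of Theorem~\ref{thm: stackyjacobi}, namely that for a smooth morphism $Z_1\to Z_2$ of Artin $X_S$-stacks of the type considered there the induced map $\mathcal{M}_{Z_1}^{\mathrm{sm}}\to\mathcal{M}_{Z_2}$ is cohomologically smooth, and then apply it to $[X_S/P]\to[X_S/M]$: by the tangent complex computation above one has $\mathcal{E}^0=0$ (of vacuously positive slopes) and vanishing obstruction space, so the relative smooth locus is all of $\Bun_P$.
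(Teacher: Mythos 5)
Your proof is correct and takes essentially the same approach as the paper's: the paper likewise filters the fiber of $\mathfrak{q}$ over an $S$-point of $\Bun_{M}$ by the commutator series of the unipotent radical, identifies the graded pieces as Picard $v$-groupoids $\mathcal{P}(\mathcal{E}[1])$, and concludes from Proposition~\ref{prop: repofpicard}. Your version is somewhat more explicit than the paper's one-paragraph sketch — in particular you spell out the tower $\Bun_{P/U^{(j)}}$, the base change along a cohomologically smooth chart, and the appeal to the converse part of Lemma~\ref{lemma: cohdimadd} to descend smoothness — but the underlying decomposition and the key inputs are the same.
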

\begin{proof}
For $S \in \Perf$, consider a $S$-point of $\Bun_{M}$ corresponding to an $M$-bundle $\mathcal{F}_{M}$ on $X_{S}$. It suffices to show that the fiber of $\mathfrak{q}$ over this $S$-point is a cohomologically smooth Artin $v$-stack. The fiber has a filtration coming from the filtration of the unipotent radical of $P$ by commutator subgroups (See \cite[Lemma~3.7]{HamannImaiDualizing} for more detail). The graded pieces of this filtration are of the form $\mathcal{P}(\mathcal{E}[1])$ for $\mathcal{E}$ a vector bundle on $X_{S}$ determined by $\mathcal{F}_{M}$. Here what we mean precisely is that the fiber, denoted $X$, is is an iterated fibration of $v$-stacks in the $v$-stacks $Y_{i} = \mathcal{P}(\mathcal{E}_{i}[1]) \ra S$ for $i = 1,\ldots,n$ and $\mathcal{E}_{i}$ is a vector bundle on $X_{S}$ determined by $\mathcal{F}_{M}$. This means that there exists a sequence of morphisms 
\[ X_{n} \xrightarrow{f_{n}} \cdots \ra X_{1} \xrightarrow{f_{1}} X_0= S \]
such that $X_{n} = X$ and the maps $f_{i} \colon X_{i} \ra X_{i - 1}$ are fibrations in $Y_{i}$ for the $v$-topology on $X_{i - 1}$. In other words, there exists a $v$-surjection $T \ra X_{i - 1}$ over $S$ from a perfectoid space $T$ such that the pullback of $f_{i}$ along the surjection identifies with the natural projection  $T \times_{S} Y_{i} \ra T$. Therefore, the claim follows from Proposition \ref{prop: repofpicard}. 
\end{proof}
We now turn our attention to describing the Picard $v$-groupoid of a general two-term complex $\mathcal{E}^{*} := \{\mathcal{E}^{-1} \ra \mathcal{E}^{0}\}$ of vector bundles on $X_{S}$. The key will be to use the distinguished triangle 
\begin{equation}{\label{eqn: distinguishedtriangleII}}
\begin{tikzcd}
& & \left|\mathcal{E}^{*}\right| \arrow[dl,"+1"] & \\
& \mathcal{E}^{-1} \arrow[rr] &  & \mathcal{E}^{0}. \arrow[ul]
\end{tikzcd} 
\end{equation}
We will use this to deduce the following claim.
\begin{proposition}{\label{prop: repofPicardingeneral}}
Let $\mathcal{E}^{*} := \{\mathcal{E}^{-1} \ra \mathcal{E}^{0}\}$ denote a length one complex of vector bundles on $X_{S}$, and suppose that $\mathcal{E}^{0}$ has nonnegative slopes after pulling back to each geometric point. Then the Picard $v$-groupoid $\mathcal{P}(|\mathcal{E}^{*}|)$ defines an Artin $v$-stack over $S$. If $\mathcal{E}^{0}$ has positive slopes after pulling back to each geometric point then $\mathcal{P}(|\mathcal{E}^{*}|)$ is $\ell$-cohomologically smooth over $S$. In this situation, if $S = \Spa(F)$ is an algebraically closed perfectoid field, then it is $\ell$-cohomologically smooth of pure $\ell$-dimension equal to $\chi(\mathcal{E}^{*})$ over $\Spa(F)$.
\end{proposition}
\begin{proof}
We may think of the distinguished triangle (\ref{eqn: distinguishedtriangleII}) in terms of a set of Cartesian diagrams
\begin{equation}{\label{eqn: PicardGroupoids}}
\begin{tikzcd}
\mathcal{E}^{0} \arrow[r] \arrow[d] & 0 \arrow[d] \\
\mathcal{E}^{*} \arrow[r] \arrow[d] & \mathcal{E}^{-1}[1] \arrow[d] & \\
0 \arrow[r] & \mathcal{E}^{0}[1] & 
\end{tikzcd}
\end{equation}
in the derived category of quasi-coherent $X_{S}$-modules. By applying the functor $\tau_{\leq 0}R\Gamma(X_{T},-)$ for varying $T \in \Perf_{S}$ and looking at the induced map on Picard $v$-groupoids, the middle horizontal arrow of (\ref{eqn: PicardGroupoids}) defines for us a morphism 
\[ \eta: \mathcal{P}(|\mathcal{E}^{*}|) \ra \mathcal{P}(\mathcal{E}^{-1}[1]). \]
We claim that the map $\eta$ is representable in locally spatial diamonds, and therefore it follows by \cite[Proposition~IV.1.8 (iii)]{FS} that $\mathcal{P}(|\mathcal{E}^{*}|)$ is an Artin $v$-stack, since $\mathcal{P}(\mathcal{E}^{-1})$ is Artin $v$-stack by Proposition \ref{prop: repofpicard}.  

To show the desired claim, it suffices by \cite[Proposition~13.4]{Ecod}, to check this after replacing $S$ by a $v$-cover. After doing this, we may arrange that $H^{1}(X_{S},\mathcal{E}^{0}_{S}) = 0$, by \cite[Proposition~II.3.4 (ii)-(iii)]{FS} and the assumptions on the slopes of $\mathcal{E}^{0}$. By applying $\tau_{\leq 0}R\Gamma(X_{T},-)$ for varying $T \in \Perf_{S}$, the top Cartesian diagram of (\ref{eqn: PicardGroupoids}) now will define for us a Cartesian diagram
\[ \begin{tikzcd}
&  \mathcal{H}^{0}(\mathcal{E}^{0}) \arrow[r] \arrow[d] & S \arrow[d] \\
& \mathcal{P}(|\mathcal{E}^{*}|) \arrow[r,"\eta"] & \mathcal{P}(\mathcal{E}^{-1}[1]),
\end{tikzcd} \]
of prestacks, where the left vertical arrow is the $0$-section, as in the proof of Proposition \ref{prop: repofpicard}. Using the group structure on $\mathcal{P}(\mathcal{E}^{-1}[1]) \ra S$, this tells us that $\eta$ is representable in locally spatial diamonds with fibers given by $\mathcal{H}^{0}(\mathcal{E}^{0})$. This proves the claim when $\mathcal{E}^{0}$ has nonnegative slopes.

If $\mathcal{E}^{0}$ has positive slopes then the map $\eta$ is $\ell$-cohomologically smooth over $S$ by Proposition \ref{prop: repofBC}.  By combining Lemma \ref{lemma: cohdimadd}, Proposition \ref{prop: repofBC}, and Proposition \ref{prop: repofpicard}, we see that $\mathcal{P}(|\mathcal{E}^{*}|)$ is also $\ell$-cohomologically smooth. Moreover, if $S = \Spa(F)$, we see that $\mathcal{P}(|\mathcal{E}^{*}|)$ is pure of $\ell$-dimension $\deg(\mathcal{E}^{0}) - \deg(\mathcal{E}^{1})$ by combining Lemma \ref{lemma: cohdimadd}, Proposition \ref{prop: repovergeompoint1} (1), and Proposition \ref{prop: repofpicard}. Now the quantity $\deg(\mathcal{E}^{0}) - \deg(\mathcal{E}^{1})$ is easily identified with $\chi(|\mathcal{E}^{*}|)$ by applying Lemma \ref{lemma: EulerPoincareAdditive} to the distinguished triangle (\ref{eqn: distinguishedtriangleII}), as desired.
\end{proof}
In particular, this suggests that the natural condition guaranteeing the smoothness of the Picard $v$-groupoid $\mathcal{P}(|\mathcal{E}^{*}|)$ is being representable by a two-term complex $\{\mathcal{E}^{-1} \ra \mathcal{E}^{0}\}$ of vector bundles, where $\mathcal{E}^{0}$ has positive slopes after pulling back to each geometric point. Thus, we would expect that the condition characterizing $\mathcal{M}_{[Z/H]}^{\mathrm{sm}}$ should really be given by insisting that the tangent bundle $Ls^{*}T_{[Z/H]/X_{S}}$ is represented by such a complex, as opposed to the condition on $H^{0}(\mathcal{E}^{*})$ given in Definition \ref{defn: MZHsmooth}. However, the former condition is not really well-defined since it depends on choosing a representative $Ls^{*}T_{[Z/H]/X_{S}}$ as a complex. Fortunately, the two conditions are related. 
\begin{proposition}{\label{prop: smoothnessconditions}} 
For $S \in \Perf$, let $\mathcal{E}^{*} := \{\mathcal{E}^{-1} \ra \mathcal{E}^{0}\}$ be a length one complex of vector bundles on $X_{S}$. Suppose $\mathcal{E}^{0}$ has positive slopes after pulling back to each geometric point. Then the $0$th sheaf cohomology of $H^{0}(|\mathcal{E}^{*}|)$ has pullback to each geometric given by the direct sum of a torsion sheaf and a vector bundle with positive slopes (again allowing for the possibility that the free part is trivial). Conversely, suppose that $S = \Spa(F)$ is a geometric point. Then if $H^{0}(|\mathcal{E}^{*}|)$ has a locally free direct summand with positive slopes then $|\mathcal{E}^{*}|$ can be represented by a length one complex $\mathcal{E}^{*} := \{\mathcal{E}^{-1} \ra \mathcal{E}^{0}\}$ of vector bundles, for $\mathcal{E}^{0}$ a bundle with positive slopes. 
\end{proposition}
\begin{proof}
For the first part, we simply note that by construction we have a surjection 
\[ \mathcal{E}^{0} \ra H^{0}(|\mathcal{E}^{*}|) \]
of $\mathcal{O}_{X_{S}}$-modules. In particular, if we pullback to each geometric point of $S$ (which is right exact) then the locally free direct summand of $H^{0}(|\mathcal{E}^{*}|)$ will admit a surjection from a vector bundle with positive slopes, which forces it to also have positive slopes.

For the converse, we first note that, when $S = \Spa(F)$, we have a tautological quasi-isomorphism $|\mathcal{E}^{*}| \simeq \Cofiber(H^{-1}(\mathcal{E}^{*}) \xrightarrow{0} H^{0}(\mathcal{E}^{*}))$ of coherent $\mathcal{O}_{X_{S}}$-modules, by virtue of the fact that the derived category of coherent $X_{F}$-modules in this case has cohomological dimension 1 (cf. \cite[Corollary~3.15]{HuybrechtsFourierMukaiTransformsinAG}). Moreover, we note that, since $H^{-1}(\mathcal{E}^{*})$ is a subsheaf of $\mathcal{E}^{-1}$ if $S = \Spa(F)$, it follows that it is actually a vector bundle (using that $X_{F}$ is a Dedekind scheme). Now, we write $H^{0}(\mathcal{E}^{*}) = \mathcal{L}_{\mathrm{free}} \oplus \mathcal{L}_{\mathrm{tors}}$, where  $\mathcal{L}_{\mathrm{free}}$ (resp. $\mathcal{L}_{\mathrm{tors}}$) is the locally free (resp. torsion) summand of $H^{0}(\mathcal{E}^{*})$. By assumption, the bundle $\mathcal{L}_{\mathrm{free}}$ has positive slopes and therefore, by taking direct sums, we reduce to showing that there exists a short exact sequence
\[ 0 \ra \mathcal{F}_{1} \ra \mathcal{F}_{2} \ra \mathcal{L}_{\mathrm{tors}} \ra 0, \]
where $\mathcal{F}_{i}$ is a vector bundle for $i = 1,2$ and $\mathcal{F}_{2}$ has positive slopes. By taking direct sums, we reduce to the case that $\mathcal{L}_{\mathrm{tors}} = \mathcal{O}_{X_{F},x}/t_{x}^{n}$, where $x$ is a closed point in $X_{F}$ and $t_{x} \in \mathcal{O}_{X_{F},x}$ is a uniformizing element in the local ring of $X_{F}$ at $x$. The desired claim now follows from the obvious short exact sequence 
\[ 0 \ra \mathcal{O}_{X_{F}} \ra \mathcal{O}_{X_{F}}(n) \ra \mathcal{O}_{X_{F},x}/t_{x}^{n} \ra 0.\]
\end{proof}
The previous two Propositions imply the following Corollary, which supports our expectation that the Picard $v$-groupoid of the tangent complex is an infinitesimal linear model for the $v$-stack $\mathcal{M}_{[Z/H]}^{\mathrm{sm}}$. 
\begin{corollary}
For $S \in \Perf$ and $Z$, a scheme smooth quasi-projective over $X_{S}$ together with an action of a smooth linear algebraic group $H/\bb{Q}_{p}$, let $s_{x}: X_{F} \ra [Z/H]$ be a section of $\mathcal{M}_{[Z/H]}$ corresponding to a geometric point $x: \Spa(F) \ra \mathcal{M}_{[Z/H]}$. Then, if $x$ lies in the sub $v$-stack $\mathcal{M}_{[Z/H]}^{\mathrm{sm}}$ it follows that  $\mathcal{P}(|Ls_{x}^{*}T^{*}_{[Z/H]/X_{S}}|) \ra \Spa(F)$, the Picard $v$-groupoid of the tangent complex, is a $\ell$-cohomologically smooth Artin $v$-stack of pure $\ell$-dimension $\chi(\mathcal{E}^{*})$. 
\end{corollary}
With all the necessary background now reviewed, we turn our attention to the proof of the main result.
\section{A Jacobian Criterion for Artin $v$-stacks}
\subsection{Proof of the Main Theorem}{\label{subsec: proofofMainTheorem}}
In this section, we will deduce Theorem \ref{thm: stackyjacobi} from Theorem \ref{thm: jacobiancriterion}. 
Before doing this, let us mention a particularly nice example where a result similar to Theorem \ref{thm: stackyjacobi} can be easily verified.  
\begin{Example}{\label{ex: negbcspace}}
Let $S$ be a perfectoid space and $\mathcal{E}^{\text{gm}} \rightarrow X_{S}$ be the geometric realization of a vector bundle $\mathcal{E}$ of rank $n$ on $X_{S}$, the relative algebraic Fargues-Fontaine curve attached to an affinoid perfectoid space $S \in \Perf$. We then consider the Artin $X_{S}$-stack $[X_{S}/\mathcal{E}^{\text{gm}}] \rightarrow X_{S}$. We claim that $\mathcal{M}_{[X_{S}/\mathcal{E}^{\text{gm}}]}$ is given by $[\mathcal{H}^{1}(\mathcal{E})/\mathcal{H}^{0}(\mathcal{E})]$, the Picard $v$-groupoid attached to the complex $\mathcal{E}[1]$ of bundles on $X_{S}$, as in Definition \ref{defn: picardvgroup}. To see this, we note that the datum of a section over $X_{S}$ is the same as an isomorphism classes of $\mathcal{E}$-torsors over $X_{S}$. Such an $\mathcal{E}$-torsor is given by an element of $H^{1}(X_{S},\mathcal{E})$; however, elements of this set are themselves torsors under $H^{0}(X_{S},\mathcal{E})$, and these parametrize the automorphisms of the associated $\mathcal{E}$-torsors defined by elements of $H^{1}(X_{S},\mathcal{E})$. From this, the claim easily follows. It follows by Proposition \ref{prop: repofPicardingeneral} that this is a cohomologically smooth Artin $v$-stack. Let's see what the pullback of the tangent complex looks like in this case. In particular, suppose we have an $S$-point of $\mathcal{M}_{Z}$. This determines an atlas
\[ X_{S} \rightarrow [X_{S}/\mathcal{E}^{\text{gm}}]\]
such that the equivalence relation $R = \mathcal{E}^{\text{gm}}$. Then computing as in Example \ref{ex: classifying}, we see that the pullback of the tangent complex $Ls_{x}^{*}T_{[X_{S}/\mathcal{E}^{\text{gm}}]/X_{S}}$ is given by
\[\{ \mathcal{E} \rightarrow 0 \} \simeq \mathcal{E}[1]. \]
In particular, assuming $S$ is affinoid, this complex has cohomology concentrated only in degrees $-1$ and $0$ and its cohomology in these degrees is given by $H^0(X_{S},\mathcal{E})$ and $H^1(X_{S},\mathcal{E})$, respectively. Therefore, $\mathcal{M}_{[X_{S}/\mathcal{E}^{\text{gm}}]}$ is isomorphic to the Picard $v$-groupoid of its associated tangent complex, so we can think of this example as the "linear" case of Theorem \ref{thm: stackyjacobi}, just as $Z = \mathcal{E}^{\mathrm{gm}} \ra X_{S}$ gives the "linear" case of Theorem \ref{thm: jacobiancriterion}.
\end{Example}

Now to prove our main Theorem we will take advantage of certain explicit charts of $\Bun_{\GL_{n},S}$ for $n \geq 1$ and $S \in \Perf$. To introduce this, we first set $I$ to be the countable index set parameterizing triples $(N,r,\mathcal{G})$ of $N \in \mathbb{Z}$, $r \in \mathbb{Z}$ such that $r > n$, and $\mathcal{G}$ a vector bundle on $X_{S}$ of rank equal to $r - n$ which has slopes greater than $N$ after pulling back to any geometric point of $S$. Then, for such an $i \in I$, we let $[\mathcal{S}urj(\mathcal{O}_{X_{S}}(N)^{\oplus r},\mathcal{G})/Aut(\mathcal{O}_{X_{S}}(N)^{\oplus r})]$ be the moduli space parametrizing surjections of vector bundles from $\mathcal{O}_{X_{S}}(N)^{\oplus r}$ to $\mathcal{G}$ on $X_{S}$, as in Definition \ref{defn: InjSurj}, quotiented out by the group diamond of automorphisms of $\mathcal{O}_{X_{S}}(N)^{\oplus r}$. We claim that this is represented by a moduli space of sections of $Z_{i} \rightarrow X_{S}$, for $Z_{i}$ an Artin $X_{S}$-stack. More specifically, we have that $Z_{i} := [\tilde{Z}_{i}/GL(\mathcal{O}_{X_{S}}(N)^{\oplus r})]$, where $\tilde{Z}_{i}$ is the open subset of $((\mathcal{O}_{X_{S}}(N)^{\oplus r})^{\vee} \otimes \mathcal{G})^{\text{gm}} \setminus \{0\}$ defined as the complement of the vanishing locus of the appropriate matrix minors corresponding to the surjectivity condition. We note in particular $\tilde{Z}_{i}$ is smooth quasi-projective over $X_{S}$. We now claim that the map
\[ \mathcal{M}_{Z_{i}} \simeq [\mathcal{S}urj(\mathcal{O}_{X_{S}}(N)^{\oplus r},\mathcal{G})/Aut(\mathcal{O}_{X_{S}}(N)^{\oplus r})] \rightarrow \Bun_{\GL_{n},S} \]
sending a surjective map to its kernel is induced by a map of $X_{S}$-stacks $Z_{i} \rightarrow [X_{S}/\GL_{n}]$ by passing to moduli spaces of sections, where $\GL_{n}$ acts on $X_{S}$ via the trivial action. This can be seen as follows. The space $Z_{i}$ has a universal $\GL_{n}$-torsor whose fiber over a section $X_{T} \rightarrow Z_{i}$ for $T \in \Perf_{S}$ is precisely the geometric realization of the kernel of the associated surjection (where we note that the formation of the kernel is well-behaved under pullback since it is given by a surjection of vector bundles). This torsor corresponds to a map $Z_{i} \rightarrow [Z_{i}/\GL_{n}]$ and the desired map is induced by the natural composition:
\[ g_{i}: Z_{i} \rightarrow [Z_{i}/\GL_{n}] \rightarrow [X_{S}/\GL_{n}]. \]
In particular, the map sending a surjection to its kernel is precisely the map $\mathcal{M}_{g_{i}}: \mathcal{M}_{Z_{i}} \rightarrow [X_{S}/\GL_{n}]$ induced by $g_{i}$ on moduli spaces of sections. We record the following features of the above situation. 
\begin{lemma}{\label{lemma: BunGLnCharts}}
We use the notation introduced above. In particular, $i = (N,r,\mathcal{G}) \in I$ is a countable index set, $Z_{i}$ is an Artin $X_{S}$-stack considered above with a smooth atlas $f_{i}: \tilde{Z}_{i} \rightarrow Z_{i}$ given by a scheme smooth quasi-projective over $X_{S}$, and $g_{i}: Z_{i} \ra [X_{S}/\GL_{n}]$ is the natural map defined by kernel of the universal surjection. If $\mathcal{M}_{f_{i}}: \mathcal{M}_{\tilde{Z}_{i}} \rightarrow \mathcal{M}_{Z_{i}}$ and $\mathcal{M}_{g_{i}}: \mathcal{M}_{Z_{i}} \rightarrow \Bun_{\GL_{n},S} \simeq \mathcal{M}_{[X_{S}/\GL_{n}]}$ are the induced maps on spaces of sections we have the following.
\begin{enumerate}
\item There is an isomorphism $\mathcal{M}_{\tilde{Z}_{i}} \simeq \mathcal{S}urj(\mathcal{O}_{X_{S}}(N)^{\oplus r},\mathcal{G})$ with the locally spatial diamond over $S$ parametrizing surjections from $\mathcal{O}_{X_{S}}(N)^{\oplus r}$ to $\mathcal{G}$.
\item The isomorphism in (1) induces an isomorphism
\[ \mathcal{M}_{Z_{i}} \simeq [\mathcal{S}urj(\mathcal{O}_{X_{S}}(N)^{\oplus r},\mathcal{G})/Aut(\mathcal{O}_{X_{S}}(N)^{\oplus r})].\] 
\item Under the identifications in (1) and (2), the map $\mathcal{M}_{f_{i}}: \mathcal{M}_{\tilde{Z}_{i}} \rightarrow \mathcal{M}_{Z_{i}}$ identifies with the quotient map by the group diamond $Aut(\mathcal{O}_{X_{S}}(N)^{\oplus r})$ parameterizing automorphisms of $\mathcal{O}_{X_{S}}(N)^{\oplus r}$. 
\item Under the identifications in (1) and (2), the map $\mathcal{M}_{g_{i}}$ identifies with the natural map 
\[ [\mathcal{S}urj(\mathcal{O}_{X_{S}}(N)^{\oplus r},\mathcal{G})/Aut(\mathcal{O}_{X_{S}}(N)^{\oplus r})] \rightarrow \Bun_{\GL_{n},S} \]
sending a surjection to its kernel.
\item The map $\mathcal{M}_{f_{i}}$ is a $\underline{\GL_{r}(\mathbb{Q}_{p})}_{S}$-torsor. Here $\underline{\GL_{r}(\bb{Q}_{p})}_{S}$ is the restriction to $\Perf_{S}$ of the group $v$-sheaf on $\Perf$ sending $T \in \Perf$ to set of continuous functions $C^{0}(|T|,\GL_{r}(\bb{Q}_{p}))$, where $\GL_{r}(\bb{Q}_{p})$ has the $p$-adic topology.
\item The map $\mathcal{M}_{g_{i}}$ is representable and $\ell$-cohomologically smooth.
\item The continuous map $\sqcup_{i \in I} |\mathcal{M}_{g_{i}}|: \sqcup_{i \in I} |\mathcal{M}_{Z_{i}}| \ra |\Bun_{\GL_{n},S}|$ on topological spaces is surjective.
\end{enumerate}
\end{lemma}
\begin{proof}
Parts (1)-(4) follow from the above discussion. Part (5) follows from the semistability of $\mathcal{O}_{X_{S}}(N)^{\oplus r}$ and Part (3), using \cite[Proposition~II.2.5 (ii)]{FS}. Part (6) follows since the fiber over a $T$-point for $T \in \Perf_{S}$ corresponding to a rank $n$ vector bundle $\mathcal{E}_{T}$ on $X_{T}$ identifies with the moduli space 
\[ \mathcal{E}xt^{1}(\mathcal{E},\mathcal{G}_{X_{T}})^{\mathcal{O}_{X_{T}}(N)^{\oplus r}} \rightarrow T, \]
parametrizing extensions of $\mathcal{E}$ by $\mathcal{G}_{X_{T}}$ with central term isomorphic to $\mathcal{O}_{X_{T}}(N)^{\oplus r}$, which is $\ell$-cohomologically smooth by Corollary \ref{cor: extgeorep} (2) and the semistability of $\mathcal{O}_{X_{S}}(N)^{\oplus r}$. The fact that the map $\sqcup_{i \in I} |\mathcal{M}_{g_{i}}|$ is surjective on topological spaces follows for example from \cite[Theorem~1.1.2]{BFH+}.
\end{proof}
The relevance of these charts is that they will interact nicely with pullback from spaces of sections of Artin $X_{S}$-stacks obtained as quotients of schemes smooth quasi-projective over $X_{S}$ by $\GL_{n}$. In particular, suppose now that we have a scheme smooth quasi-projective over $X_{S}$ denoted $Z$ equipped with an action of $\GL_{n}/\bb{Q}_{p}$ for some $n \geq 1$. We consider the stack quotient $[Z/\GL_{n}]$ together with the space of sections $\mathcal{M}_{[Z/\GL_{n}]}$. We write $\ol{f}: [Z/\GL_{n}] \ra [X_{S}/\GL_{n}]$ for the natural map. Then, for $i \in I$ an element in the countable index set defined above, we have a natural sequence of Cartesian diagrams
\begin{equation}{\label{keydiagram of Artinstacks}}
\begin{tikzcd}
\tilde{Z}_{i} \times_{[X_{S}/\GL_{n}]} \left[Z/\GL_{n}\right] \arrow[r,"\tilde{f}_{i}"] \arrow[d] &  Z_{i} \times_{[X_{S}/\GL_{n}]} [Z/\GL_{n}] \arrow[r,"\tilde{g}_{i}"] \arrow[d] & \left[Z/\GL_{n}\right] \arrow[d,"\ol{f}"] \\
\tilde{Z}_{i} \arrow[r,"f_{i}"] & Z_{i} \arrow[r,"g_{i}"] & \left[X_{S}/\GL_{n}\right] 
\end{tikzcd}
\end{equation}
of Artin stacks over $X_{S}$. Passing to the moduli spaces of sections, this induces a Cartesian diagram of $v$-stacks
\begin{equation}{\label{keydiagram of vstacks}}
\begin{tikzcd}
\mathcal{M}_{\tilde{Z}_{i} \times_{[X_{S}/\GL_{n}]} \left[Z/\GL_{n}\right]} \arrow[r,"\mathcal{M}_{\tilde{f}_{i}}"] \arrow[d] & \mathcal{M}_{Z_{i} \times_{[X_{S}/\GL_{n}]} [Z/\GL_{n}]} \arrow[r,"\mathcal{M}_{\tilde{g}_{i}}"] \arrow[d] & \mathcal{M}_{\left[Z/\GL_{n}\right]} \arrow[d,"\mathcal{M}_{\ol{f}}"]
\\
\mathcal{M}_{\tilde{Z}_{i}} \arrow[r,"\mathcal{M}_{f_{i}}"] & \mathcal{M}_{Z_{i}} \arrow[r,"\mathcal{M}_{g_{i}}"] & \Bun_{\GL_{n},S} \simeq \mathcal{M}_{[X_{S}/\GL_{n}]}.
\end{tikzcd}
\end{equation}
We record the following properties of this above diagram.
\begin{lemma}{\label{lemma: propertiesofkeydiagram}}
For $S \in \Perf$ and $Z$ a scheme smooth quasi-projective over $X_{S}$ equipped with an action of $\GL_{n}/\bb{Q}_{p}$, the maps in the diagrams (\ref{keydiagram of vstacks}) for $i = (r,\mathcal{G},N) \in I$  satisfy the following properties. 
\begin{enumerate}
\item The map $\mathcal{M}_{\tilde{f}_{i}}$ is a $\underline{\GL_{r}(\mathbb{Q}_{p})}_{S}$-torsor. 
\item The map $\mathcal{M}_{\tilde{g}_{i}}$ is a representable $\ell$-cohomologically smooth map of $v$-stacks.
\item The induced map on topologial spaces 
\[ \sqcup_{i \in } |\mathcal{M}_{\tilde{g}_{i}}|: \sqcup_{i \in I} |\mathcal{M}_{Z_{i} \times_{[X_{S}/\GL_{n}]} \left[Z/\GL_{n}\right]}| \ra |\mathcal{M}_{[Z/\GL_{n}]}| \]
is surjective.
\item The $X_{S}$-stack $\tilde{Z}_{i} \times_{[X/\GL_{n}]} [Z/\GL_{n}]$ is representable by scheme smooth quasi-projective over $X_{S}$.
\end{enumerate}
\end{lemma}
\begin{proof}
Parts (1)-(3) follow from the fact that the diagrams appearing in (\ref{keydiagram of vstacks}) are Cartesian, combined with Lemma \ref{lemma: BunGLnCharts} and the fact that $\underline{\GL_{r}(\bb{Q}_{p})}_{S}$-torsors and $\ell$-cohomologically smooth surjections are stable under pullback. For the former stability property this follows easily from rewriting $\mathcal{M}_{f_{i}}$ as the pullback of the natural map $S \ra [S/\underline{\GL_{r}(\bb{Q}_{p})_{S}}]$ along the map $\mathcal{M}_{Z_{i}} \ra [S/\underline{\GL_{r}(\bb{Q}_{p})_{S}}]$ corresponding to the torsor, and for the later stability property it follows from \cite[Proposition~23.15]{Ecod}. Part (4) follows from the fact that $[Z/\GL_{n}] \ra [X_{S}/\GL_{n}]$ is representable smooth quasi-projective together with the fact that the $\tilde{Z}_{i}$ is also smooth quasi-projective over $X_{S}$. 
\end{proof}
We now establish some more refined claims on the structure of this diagram.
\begin{proposition}{\label{prop: keypropertiesofthediagram}}
The following is true.
\begin{enumerate}
\item 
\begin{enumerate}
\item The pullback of $\mathcal{M}_{[Z/\GL_{n}]}^{\mathrm{sm}}$ along $\mathcal{M}_{\tilde{g}_{i}}$ identifies with $\mathcal{M}^{\mathrm{sm}}_{Z_{i} \times_{[X_{S}/\GL_{n}]} [Z/\GL_{n}]}$.
\item The pullback of $\mathcal{M}^{\mathrm{sm}}_{Z_{i} \times_{[X_{S}/\GL_{n}]} [Z/\GL_{n}]}$ under $\mathcal{M}_{\tilde{f}_{i}}$ identifies with $\mathcal{M}^{\mathrm{sm}}_{\tilde{Z}_{i} \times_{[X_{S}/\GL_{n}]} [Z/\GL_{n}]}$.
\end{enumerate}
\item Let $\tilde{x}: \Spa(F) \ra \mathcal{M}_{\tilde{Z}_{i} \times_{[X_{S}/\GL_{n}]} [Z/\GL_{n}]}$ be a geometric point with images $x := \mathcal{M}_{\tilde{f}_{i}} \circ \tilde{x}$ and $\overline{x} := \mathcal{M}_{g_{i}} \circ \mathcal{M}_{\tilde{f}_{i}} \circ \tilde{x}$. We write $s_{\tilde{x}}: X_{F} \ra \tilde{Z}_{i} \times_{[X/\GL_{n}]} [Z/\GL_{n}]$, $s_{x}: X_{F} \ra Z_{i} \times_{[X/\GL_{n}]} [Z/\GL_{n}]$, and $s_{\overline{x}}: X_{F} \ra [Z/\GL_{n}]$ for the corresponding sections over $X_{S}$. Suppose that $\overline{x}$ defines a point in $\mathcal{M}^{\mathrm{sm}}_{[Z/\GL_{n}]}$ (which in turn implies the analogous claim for $x$ and $\tilde{x}$ by (1)) then we have the following relationship between the Euler-Poincare characteristics defined in Definition \ref{defn: EulerPoincareCharacteristicofTwoTermComplex} (2).
\begin{enumerate}

\item We have an equality:
\[ \chi(Ls_{\overline{x}}^{*}T^{*}_{[Z/\GL_{n}]}) = \chi(Ls_{x}^{*}T^{*}_{Z_{i} \times_{[X_{S}/\GL_{n}]} [Z/\GL_{n}]}) - \mathrm{dim}_{\ell}(\mathcal{M}_{\tilde{g}_{i},\overline{x}}), \]
where $\mathrm{dim}_{\ell}(\mathcal{M}_{\tilde{g}_{i},\overline{x}})$ denotes the $\ell$-dimension in a neighborhood of $x$ of the cohomologically smooth (by Lemma \ref{lemma: propertiesofkeydiagram} (2)) representable morphism $\mathcal{M}_{\tilde{g}_{i}}$.
\item 
We have an equality:
\[ \chi(Ls_{x}^{*}T^{*}_{Z_{i} \times_{[X_{S}/\GL_{n}]} [Z/\GL_{n}]}) = \chi(Ls_{\tilde{x}^{*}}T^{*}_{\tilde{Z}_{i} \times_{[X_{S}/\GL_{n}]} [Z/\GL_{n}]}). \]
\end{enumerate}
\end{enumerate}
\end{proposition}
\begin{proof}
We first show parts 1 (a) and 2 (a) of the statement. We will do this by analyzing the distinguished triangle of tangent complex attached to the smooth map of Artin $X_{S}$-stacks
\[ \tilde{g}_{i}: [Z/\GL_{n}] \times_{[X_{S}/\GL_{n}]} Z_{i} \rightarrow [Z/\GL_{n}]. \]
We can compute fibers of this map in terms of fibers of the map $g_{i}: Z_{i} \rightarrow [X_{S}/\GL_{n}]$, since $\tilde{g}_{i}$ is a base-change of $g_{i}$. In particular, assume we have a section $s_{y}: X_{T} \rightarrow [Z/\GL_{n}]$ for $T \in \Perf_{S}$ and, by abuse of notation, write $s_{y}$ for the induced section $X_{T} \rightarrow [X_{S}/\GL_{n}]$ via $\ol{f}: [Z/\GL_{n}] \rightarrow [X_{S}/\GL_{n}]$. Let $\mathcal{E}_{y}$ denote the corresponding vector bundle of rank $n$ over $X_{T}$ defined by $s_{y}$. The fiber over this section is given by an open substack of the moduli stack
\[ [X_{T}/\mathcal{G}_{T}^{\vee} \otimes \mathcal{E}_{y}], \]
where $\mathcal{G}_{T}$ is the base change of $\mathcal{G}$ to $X_{T}$. In particular, by Lemma \ref{lemma: BunGLnCharts} (4), we note that the fibers of $\mathcal{M}_{g_{i}}$ are given by an open subset of the negative Banach-Colmez space $\mathcal{E}xt^{1}(\mathcal{G}_{T},\mathcal{E}_{y})^{\mathcal{O}_{X_{T}}(N)^{\oplus r}} \subset \mathcal{E}xt^{1}(\mathcal{G}_{T},\mathcal{E}_{y}) \simeq \mathcal{H}^{1}(\mathcal{G}_{T}^{\vee} \otimes \mathcal{E}_{y})$, and $\mathcal{H}^{1}(\mathcal{G}_{T}^{\vee} \otimes \mathcal{E}_{y})$ can be described as the moduli space of sections of $[X_{T}/\mathcal{G}_{T}^{\vee} \otimes \mathcal{E}_{y}] \ra X_{T}$,  as in Example \ref{ex: negbcspace} (where we note by the assumptions on slopes that $\mathcal{H}^{0}(\mathcal{G}_{T}^{\vee} \otimes \mathcal{E}_{y})$ is trivial by \cite[Proposition~II.3.4 (i)]{FS}).

We now assume that $s_{y}$ is a section defining a $T$-point $y$ of $\mathcal{M}_{[Z/\GL_{n}]}^{\mathrm{sm}}$. We are tasked with showing that if we have a section $s_{x}$ of $[Z/\GL_{n}] \times_{[X_{S}/\GL_{n}]} Z_{i}$ defining a $T$-point $x$ of $\mathcal{M}_{Z_{i} \times_{[X_{S}/\GL_{n}]} [Z/\GL_{n}]}$  lying over $y$ then $x$ lies in the open subspace $\mathcal{M}^{\mathrm{sm}}_{Z_{i} \times_{[X_{S}/\GL_{n}]} [Z/\GL_{n}]}$. Since the condition defining $\mathcal{M}_{[Z/\GL_{n}]}^{\mathrm{sm}}$ and $\mathcal{M}^{\mathrm{sm}}_{Z_{i} \times_{[X_{S}/\GL_{n}]} [Z/\GL_{n}]}$ is on the pullback of the tangent complex to geometric points, we may assume that $T = \Spa(F)$ for some algebraically closed perfectoid field $F$. By abuse of notation, let us also write $s_{x}: X_{F} \rightarrow [X_{F}/\mathcal{G}_{F}^{\vee} \otimes \mathcal{G}_{y}]$ for the section of the fiber over $s_{y}$ defined by $s_{x}$ . It follows, by Corollary \ref{cor: disttriangle}, that we get a distinguished triangle of tangent complexes:
\begin{equation}{\label{eqn: distinguishedtriangleoftangentcomplexesI}}
\begin{tikzcd}
& & Ls_{y}^{*}T^{*}_{([Z/\GL_{n}])/X_{S}}\arrow[dl,"+1"] & \\
& Ls_{x}^{*}T^{*}_{([X_{F}/\mathcal{G}^{\vee}_{F}\otimes\mathcal{G}_{y}])/X_{F}} \arrow[rr] &  & Ls_{x}^{*}T^{*}_{([Z/\GL_{n}] \times_{[X_{S}/\GL_{n}]} Z_{i})/X_{S}}. \arrow[ul,"d\tilde{g}_{i,x/y}"]
\end{tikzcd} 
\end{equation}
We need to show that $Ls_{x}^{*}T^{*}_{([Z/\GL_{n}] \times_{[X_{S}/\GL_{n}]} Z_{i})/X_{S}}$ has  cohomology sheaf in degree $0$ with locally free part given by a vector bundle with positive slopes. However, we note that $Ls_{x}^{*}T^{*}_{([X_{F}/\mathcal{G}_{F}^{\vee}\otimes\mathcal{G}_{y}])/X_{F}}$ will only have a non-trivial cohomology sheaf in degree $-1$ given by the vector bundle $\mathcal{G}^{\vee}_{F} \otimes \mathcal{G}_{y}$. Therefore, by the long exact triangle of cohomology sheaves attached to the distinguished triangle (\ref{eqn: distinguishedtriangleoftangentcomplexesI}) this tells us that we have an isomorphism $H^{0}(Ls_{x}^{*}T^{*}_{([Z/\GL_{n}] \times_{[X_{S}/\GL_{n}]} Z_{i})/X_{S}}) \simeq H^{0}(Ls_{y}^{*}T^{*}_{([Z/\GL_{n}]/X_{S}})$, and by assumption the RHS has locally free part given by a vector bundle with positive slopes, and this implies the same for the LHS, which tells us that $s_{x}$ defines a point in $\mathcal{M}_{[Z/\GL_{n}] \times_{[X_{S}/\GL_{n}]} Z_{i}}^{\mathrm{sm}}$, as desired. To see part 2 (a), we simply combine Lemma \ref{lemma: EulerPoincareAdditive} with the fact that $-\mathrm{deg}(\mathcal{G}_{F}^{\vee} \otimes \mathcal{G}_{y})$ identifies with the $\ell$-dimension of the cohomology smooth locally spatial diamond $\mathcal{E}xt^{1}(\mathcal{G}_{F}^{\vee} \otimes \mathcal{G}_{y})^{\mathcal{O}(N)_{X_{F}}^{\oplus r}}$ by Proposition \ref{prop: repovergeompoint1} (2), which is precisely the $\ell$-dimension of $\mathcal{M}_{\tilde{g}_{i}}$ around the point corresponding to $x$, since $\mathcal{E}xt^{1}(\mathcal{G}_{F}^{\vee} \otimes \mathcal{G}_{y})^{\mathcal{O}(N)_{X_{F}}^{\oplus r}}$ describes the fiber of $\mathcal{M}_{\tilde{g}_{i}}$ over $x$. 

Now we show parts 1 (b) and 2 (b) using similar reasoning. As before, the key will be to analyze the distinguished triangle of tangent complexes attached to the smooth map of Artin $X_{S}$-stacks 
\[ \tilde{f}_{i}: \tilde{Z}_{i} \times_{[X_{S}/\GL_{n}]} [Z/\GL_{n}] \ra Z_{i} \times_{[X_{S}/\GL_{n}]} [Z/\GL_{n}]. \]
As before, the fibers of this map can be analyzed in terms of $f_{i}: \tilde{Z}_{i} \ra Z_{i}$, since $\tilde{f}_{i}$ is a base-change of $f_{i}$. We recall that $f_{i}$ is the quotient map by $\GL(\mathcal{O}_{X_{S}}(N)^{\oplus r})$. In particular, given a section $s_{y}: X_{F} \ra Z_{i} \times_{[X_{S}/\GL_{n}]} [Z/\GL_{n}]$ over $X_{S}$ the fiber of $\tilde{f}_{i}$ over $s_{y}$ will identify with $\GL(\mathcal{O}_{X_{F}}(N))^{\oplus r})$. We write $s_{x}: X_{F} \ra \tilde{Z}_{i} \times_{[X_{S}/\GL_{n}]} [Z/\GL_{n}]$ for a section mapping to $s_{y}$ under $\tilde{f}_{i}$, and abusively also write $s_{x}: X_{F} \ra [X_{F}/\GL(\mathcal{O}_{X_{F}}(N)^{\oplus r})]$ for the induced section of the fiber. We consider the distinguished triangle  
\begin{equation}{\label{eqn: distinguishedtriangleoftangentcomplexesII}}
\begin{tikzcd}
& & Ls_{y}^{*}T^{*}_{([Z/\GL_{n}] \times_{[X_{S}/\GL_{n}]} Z_{i})/X_{S}} \arrow[dl,"+1"] & \\
& Ls_{x}^{*}T^{*}_{\GL(\mathcal{O}_{X_{F}}(N)^{\oplus r})/X_{F}} \arrow[rr] &  & Ls_{x}^{*}T^{*}_{([Z/\GL_{n}] \times_{[X_{S}/\GL_{n}]} \tilde{Z}_{i})/X_{S}} \arrow[ul,"d\tilde{f}_{i,x/y}"]
\end{tikzcd} 
\end{equation}
Now, by arguing as in Example \ref{ex: classifying}, one can compute that $Ls_{x}^{*}T^{*}_{[(X_{F}/\GL(\mathcal{O}_{X_{F}}(N)^{\oplus r}]))/X_{F}}$ identifies with the vector bundle $\Lie(\GL_{r}) \times^{\GL_{r},\mathrm{Ad}} \mathcal{O}_{X_{F}}(N)^{\oplus r} \simeq \mathcal{O}_{X_{F}}(N)^{\oplus r} \otimes (\mathcal{O}_{X_{F}}(N)^{\oplus r})^{\vee}$. Now suppose that $s_{y}$ defines a point in $\mathcal{M}^{\mathrm{sm}}_{[Z/\GL_{n}] \times_{[X_{S}/\GL_{n}]} Z_{i}}$. Then we want to show that $s_{x}$ defines a point in $\mathcal{M}^{\mathrm{sm}}_{[Z/\GL_{n}] \times_{[X_{S}/\GL_{n}]} \tilde{Z}_{i}}$ (where again we note that, since the condition characterizing the smooth locus is defined by pulling back to each geometric point, it suffices to check Part 1 (b) on geometric points). By looking at the long exact sequence of cohomology sheaves attached to the above distinguished triangle (\ref{eqn: distinguishedtriangleoftangentcomplexesII}), we obtain a right exact sequence 
\begin{equation}{\label{eqn: rightexactsequence}}
 \mathcal{O}_{X_{F}}(N)^{\oplus r} \otimes (\mathcal{O}_{X_{F}}(N)^{\oplus r})^{\vee} \ra H^{0}(Ls_{x}^{*}T^{*}_{([Z/\GL_{n}] \times_{[X_{S}/\GL_{n}]} \tilde{Z}_{i})/X_{S}}) \ra  H^{0}(Ls_{y}^{*}T^{*}_{([Z/\GL_{n}] \times_{[X_{S}/\GL_{n}]} Z_{i})/X_{S}}) \ra 0 
\end{equation}
of coherent $\mathcal{O}_{X_{F}}$-modules. Here we note that
\[ H^{0}(Ls_{x}^{*}T^{*}_{([Z/\GL_{n}] \times_{[X_{S}/\GL_{n}]} \tilde{Z}_{i})/X_{S}}) \simeq s_{x}^{*}T_{([Z/\GL_{n}] \times_{[X_{S}/\GL_{n}]} \tilde{Z}_{i})/X_{S}}, \]
is just a vector bundle on $X_{S}$ by Lemma \ref{lemma: BunGLnCharts} (4). We are tasked with showing that this vector bundle has positive slopes. 

To accomplish this, we look at the point in $\mathcal{M}_{\tilde{Z}_{i}}$ defined by $x$, and the point in $\mathcal{M}_{Z_{i}}$ defined by $y$. If we write $\eta: \mathcal{O}_{X_{F}}(N)^{\oplus r} \ra \mathcal{G}_{F}$ for the surjection defined by $x$ then the long exact sequence of cohomology sheaves given by the distinguished triangle of tangent complexes attached to the morphism $f_{i}: \tilde{Z}_{i} \ra Z_{i}$ will identify with the natural short exact sequence of vector bundles on $X_{F}$ given by
\[ 0 \rightarrow (\mathcal{O}_{X_{F}}(N)^{\oplus r})^{\vee} \otimes \mathrm{Ker}(\eta) \ra (\mathcal{O}_{X_{F}}(N)^{\oplus r})^{\vee} \otimes \mathcal{O}_{X_{F}}(N)^{\oplus r} \ra  (\mathcal{O}_{X_{F}}(N)^{\oplus r})^{\vee} \otimes \mathcal{G}_{F} \ra 0. \]
This will imply that the first non-zero map in (\ref{eqn: rightexactsequence}) will factor through the surjection
$(\mathcal{O}_{X_{F}}(N)^{\oplus r})^{\vee} \otimes \mathcal{O}_{X_{F}}(N)^{\oplus r} \ra  (\mathcal{O}_{X_{F}}(N)^{\oplus r})^{\vee} \otimes \mathcal{G}_{F}$, where we note by the assumptions on slopes that $(\mathcal{O}_{X_{F}}(N)^{\oplus r})^{\vee} \otimes \mathcal{G}_{F}$ has positive slopes. Now, by assumption, we know that the locally free direct summand of $H^{0}(Ls_{y}^{*}T^{*}_{([Z/\GL_{n}] \times_{[X_{S}/\GL_{n}]} Z_{i})/X_{S}})$ has positive slopes, which will force the middle term $H^{0}(Ls_{x}^{*}T^{*}_{([Z/\GL_{n}] \times_{[X_{S}/\GL_{n}]} \tilde{Z}_{i})/X_{S}})$ to have positive slopes. This establishes 1 (b). For 2 (b), this follows by another application of Lemma \ref{lemma: EulerPoincareAdditive} to the distinguished triangle (\ref{eqn: distinguishedtriangleoftangentcomplexesII}), noting that $(\mathcal{O}_{X_{F}}(N)^{\oplus r})^{\vee} \otimes \mathcal{O}_{X_{F}}(N)^{\oplus r} \simeq \mathcal{O}^{\oplus r^{2}}$ has degree $0$.
\end{proof}
With this last proposition in hand, we have basically proven our main theorem. 
\begin{theorem}
For $S \in \Perf$, let $H$ be a linear algebraic group over $\mathbb{Q}_{p}$ and $Z \rightarrow X_{S}$ a scheme smooth quasi-projective over $X_{S}$ with an action of $H$. Then $\mathcal{M}_{[Z/H]} \rightarrow S$ defines an Artin $v$-stack, and $\mathcal{M}_{[Z/H]}^{\mathrm{sm}} \rightarrow S$ is a cohomologically smooth map of Artin $v$-stacks. 
\\\\
Moreover, for any geometric point $x: \Spa(F) \rightarrow \mathcal{M}_{[Z/H]}^{\mathrm{sm}}$ given by a map $\Spa(F) \rightarrow S$ and a section $s: X_{F} \rightarrow Z$, the map $\mathcal{M}_{[Z/H]}^{\mathrm{sm}} \rightarrow S$ is at $x$ of $\ell$-dimension equal to the quantity:
\[ \chi(Ls^{*}T^{*}_{([Z/H]/X_{S})}). \]
\end{theorem}
\begin{proof}
First note that we can choose a closed embedding for $H \hookrightarrow \GL_{n}$ for some sufficiently large $n$. This gives an isomorphism of $X_{S}$-stacks: $[Z/H] \simeq [Z \times^{H} \GL_{n}/\GL_{n}]$. Thus, we may without loss of generality assume $G = \GL_{n}$. We consider the diagram (\ref{keydiagram of vstacks}) for varying $i = (N,r,\mathcal{G}) \in I$. By Theorem \ref{thm: jacobiancriterion} applied to the scheme
 $\tilde{Z}_{i} \times_{[X_{S}/\GL_{n}]} [Z/\GL_{n}]$ smooth quasi-projective over $X_{S}$, we know that $\mathcal{M}_{\tilde{Z}_{i} \times_{[X_{S}/\GL_{n}]} [Z/\GL_{n}]}$ defines a locally spatial diamond over $S$ and that the open subfunctor $\mathcal{M}_{\tilde{Z}_{i} \times_{[X_{S}/\GL_{n}]} [Z/\GL_{n}]}^{\mathrm{sm}}$ defines a $\ell$-cohomologically smooth diamond over $S$. By combining this with \cite[Proposition~13.4]{Ecod} and noting that $S \ra [S/\underline{\GL_{r}(\bb{Q}_{p})}_{S}]$ is a $v$-cover, this tells us that the natural map
 \[ \mathcal{M}_{Z_{i} \times_{[X_{S}/\GL_{n}]} [Z/\GL_{n}]} \ra [S/\underline{\GL_{r}(\bb{Q}_{p})}_{S}] \]
 corresponding to the torsor $\mathcal{M}_{\tilde{f}_{i}}$ is representable in locally spatial diamonds. By combining \cite[Proposition~IV.1.8 (iii)]{FS} with \cite[Example~IV.1.9 (iv)]{FS}, this tells us that $\mathcal{M}_{Z_{i} \times_{[X_{S}/\GL_{n}]} [Z/\GL_{n}]}$ is Artin $v$-stack over $S$. Similarly, by using Proposition \ref{prop: keypropertiesofthediagram} 1 (b) and using \cite[Proposition~23.15]{Ecod}, we have that 
 \[ \mathcal{M}^{\mathrm{sm}}_{Z_{i} \times_{[X_{S}/\GL_{n}]} [Z/\GL_{n}]} \ra [S/\underline{\GL_{r}(\bb{Q}_{p})}_{S}] \]
 is a representable in locally spatial diamonds and $\ell$-cohomologically smooth. Since $[S/\underline{\GL_{r}(\bb{Q}_{p})}_{S}] \ra S$ is $\ell$-cohomologically smooth of pure $\ell$-dimension $0$ again by \cite[Example~IV.1.9 (iv)]{FS}, we conclude by an application of Lemma \ref{lemma: cohdimadd} that $\mathcal{M}^{\mathrm{sm}}_{Z_{i} \times_{[X_{S}/\GL_{n}]} [Z/\GL_{n}]} \ra S$ is $\ell$-cohomologically smooth. Moreover, again since $[S/\underline{\GL_{r}(\bb{Q}_{p})}_{S}] \ra S$ is actually $\ell$-cohomologically smooth of pure $\ell$-dimension $0$, it follows, given a geometric point $\tilde{x}: \Spa(F) \rightarrow \mathcal{M}^{\mathrm{sm}}_{\tilde{Z}_{i} \times_{[X_{S}/\GL_{n}]} [Z/\GL_{n}]}$ with image $x := \mathcal{M}_{\tilde{f}_{i}} \circ \tilde{x}$, that the $\ell$-dimension over of $\mathcal{M}^{\mathrm{sm}}_{\tilde{Z}_{i} \times_{[X_{S}/\GL_{n}]} [Z/\GL_{n}]} \ra S$ around $\tilde{x}$ agrees with the $\ell$-dimension of $\mathcal{M}^{\mathrm{sm}}_{Z_{i} \times_{[X_{S}/\GL_{n}]} [Z/\GL_{n}]} \ra S$ around $x$, where we have again used Lemma \ref{lemma: cohdimadd}.
 
 Now, by using Lemma \ref{lemma: propertiesofkeydiagram} (3)-(4), we see that
 \[ \sqcup_{i \in I} \mathcal{M}_{\tilde{g}_{i}}: \mathcal{M}_{Z_{i} \times_{[X_{S}/\GL_{n}]} [Z/\GL_{n}]} \ra \mathcal{M}_{[Z/\GL_{n}]} \]
 defines a representable cohomologically smooth surjection (where we have used that cohomologically smooth morphisms are universally open to see that this is a $v$-surjection) from an Artin $S$-stack. By choosing an atlas for each of the $\mathcal{M}_{\tilde{Z}_{i}}$, we see that this endows $\mathcal{M}_{[Z/\GL_{n}]}$ with the structure of a Artin $v$-stack, as desired. 

Similarly, we see that the map
\[ \sqcup_{i \in I} \mathcal{M}^{\mathrm{sm}}_{\tilde{g}_{i}}: \mathcal{M}^{\mathrm{sm}}_{Z_{i} \times_{[X_{S}/\GL_{n}]} [Z/\GL_{n}]} \ra \mathcal{M}^{\mathrm{sm}}_{[Z/\GL_{n}]} \]
given by pulling back the $\mathcal{M}_{\tilde{g}_{i}}$ along $\mathcal{M}^{\mathrm{sm}}_{[Z/\GL_{n}]} \rightarrow \mathcal{M}_{[Z/\GL_{n}]}$ and invoking Proposition \ref{prop: keypropertiesofthediagram} 1 (a)-(b), defines a cohomologically smooth surjection from a cohomologically smooth Artin $v$-stack. It now follows by Lemma \ref{lemma: cohdimadd} that $\mathcal{M}^{\mathrm{sm}}_{[Z/\GL_{n}]} \ra S$ is a cohomologically smooth Artin $v$-stack. The claim on the $\ell$-cohomological dimension now follows by combining Theorem \ref{thm: jacobiancriterion}, Lemma \ref{lemma: cohdimadd}, Proposition \ref{prop: keypropertiesofthediagram} 2 (a)-(b), together with the observation made above that the $\ell$-cohomological dimension agrees of $\mathcal{M}_{Z_{i} \times_{[X_{S}/\GL_{n}]} [Z/\GL_{n}]}$ around a geometric point agrees with the $\ell$-cohomological dimension around its image in $\mathcal{M}_{Z_{i} \times_{[X_{S}/\GL_{n}]} [Z/\GL_{n}]}$ made above.
\end{proof}
Now, let's show the utility of this result in some applications. 
\subsection{Applications}{\label{subsec: applications}}
First, let's start by reproving a result shown in Fargues-Scholze. Throughout, $F$ will denote an algebraically closed perfectoid field.
\begin{theorem}{\cite[Theorem~IV.1.18]{FS}}{\label{thm: bungsmooth}}
Let $G/\mathbb{Q}_{p}$ be a connected reductive group. Let $\Bun_{G} \rightarrow \Spd(F)$ be the moduli stack parametrizing $G$-bundles on the relative algebraic Fargues-Fontaine curves $X_{S}$, for $S \in \Perf_{F}$. Then $\Bun_{G}$ is an Artin $v$-stack cohomologically smooth over $\Spd(F)$ of pure $\ell$-dimension $0$.
\end{theorem}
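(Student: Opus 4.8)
The plan is to apply Theorem~\ref{thm: stackyjacobi} with $S = \Spa(F)$, $H = G$ (a connected reductive group is in particular linear algebraic), and $Z = X_{S}$ equipped with the trivial $G$-action, so that $[Z/H] = [X_{S}/G]$ is the classifying stack of $G$ over the Fargues--Fontaine curve. A section $X_{T} \to [X_{S}/G]$ over $X_{S}$ is the same datum as a $G$-bundle on $X_{T}$, so $\mathcal{M}_{[X_{S}/G]} = \Bun_{G}$ as $v$-stacks over $\Spd(F)$. Since the identity morphism $X_{S} \to X_{S}$ is smooth and (trivially) quasi-projective, Theorem~\ref{thm: stackyjacobi} immediately gives that $\Bun_{G}$ is an Artin $v$-stack; what remains is to show that its \emph{entire} smooth locus is $\Bun_{G}$ and that the $\ell$-dimension is identically $0$.

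Next I would compute the pullback of the tangent complex along an arbitrary section. Running the computation of Examples~\ref{ex: classifying} and~\ref{ex: sections} with the base scheme $X_{S}$ in place of $\Spec k$ (legitimate by Theorem~\ref{thm: tangcompl}), one finds that for a section $s\colon X_{T} \to [X_{S}/G]$ corresponding to a $G$-bundle $\mathcal{F}_{G}$ on $X_{T}$, the complex $s^{*}(T^{*}_{[X_{S}/G]/X_{S}})$ is represented, in cohomological degrees $[-1,0]$, by $\{\mathcal{E}^{-1} \to \mathcal{E}^{0}\}$ with $\mathcal{E}^{-1} = \mathrm{ad}(\mathcal{F}_{G}) := \mathcal{F}_{G} \times^{G,\mathrm{Ad}} \mathrm{Lie}(G)$ and $\mathcal{E}^{0} = 0$; that is, $s^{*}(T^{*}_{[X_{S}/G]/X_{S}}) \simeq \mathrm{ad}(\mathcal{F}_{G})[1]$. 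The key point is that $\mathcal{E}^{0} = 0$, being a vector bundle of rank $0$, vacuously has strictly positive slopes after pulling back to any geometric point, so every section of $[X_{S}/G]$ over $X_{T}$ lies in $\mathcal{M}^{\text{sm}}_{[X_{S}/G]}$, whence $\mathcal{M}^{\text{sm}}_{[X_{S}/G]} = \mathcal{M}_{[X_{S}/G]} = \Bun_{G}$. (Equivalently, by Lemma~\ref{lemma: smoothchar}: $\mathcal{H}^{0}$ of this complex is $\mathcal{H}^{1}(\mathrm{ad}(\mathcal{F}_{G}))$, which is cohomologically smooth by Proposition~\ref{prop: repovergeompoint1}(2), and $H^{1}(X_{F}, \mathrm{ad}(\mathcal{F}_{G})[1]) = H^{1}(X_{F}, 0) = 0$.) Applying the second part of Theorem~\ref{thm: stackyjacobi}, the map $\Bun_{G} \to \Spd(F)$ is cohomologically smooth, and at the geometric point attached to $\mathcal{F}_{G}$ its $\ell$-dimension equals $\deg(\mathcal{E}^{0}) - \deg(\mathcal{E}^{-1}) = -\deg(\mathrm{ad}(\mathcal{F}_{G}))$.

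Finally it remains to show that $\deg(\mathrm{ad}(\mathcal{F}_{G})) = 0$ for every $G$-bundle $\mathcal{F}_{G}$ on $X_{F}$, which then forces purity of $\ell$-dimension $0$. Since $\deg(\mathrm{ad}(\mathcal{F}_{G})) = \deg(\det \mathrm{ad}(\mathcal{F}_{G})) = \deg(\mathcal{F}_{G} \times^{G} \det(\mathrm{Lie}(G)))$, it suffices to check that the character $\det \circ \mathrm{Ad}\colon G \to \mathbb{G}_{m}$ giving the $G$-action on $\det(\mathrm{Lie}(G))$ is trivial: on a maximal torus $T \subset G$ one has $\det(\mathrm{Ad}|_{T}) = \prod_{\alpha \in \Phi} \alpha$, which is the trivial character because the root system $\Phi$ is stable under $\alpha \mapsto -\alpha$, and a character of the connected group $G$ that is trivial on $T$ is trivial. (Alternatively, invoke the existence of a nondegenerate $\mathrm{Ad}$-invariant bilinear form on $\mathrm{Lie}(G)$, which makes $\mathrm{ad}(\mathcal{F}_{G})$ self-dual and hence of degree $0$.) The argument has no genuinely hard step once Theorem~\ref{thm: stackyjacobi} and the tangent-complex computation of \S2 are in hand; the only points deserving care are the reduction $\mathcal{M}^{\text{sm}}_{[X_{S}/G]} = \mathcal{M}_{[X_{S}/G]}$, i.e.\ that the vanishing degree-$0$ term of the tangent complex automatically satisfies the positivity hypothesis, and the vanishing of $\det \circ \mathrm{Ad}$.
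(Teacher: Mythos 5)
Your argument is correct and is essentially the same as the paper's: apply Theorem~\ref{thm: stackyjacobi} with $Z = X_{F}$, $H = G$, compute the pullback of the tangent complex to be $\mathrm{ad}(\mathcal{F}_{G})[1]$ (so $\mathcal{E}^{0} = 0$ vacuously satisfies the positivity hypothesis and the smooth locus is everything), and conclude the $\ell$-dimension is $-\deg(\mathrm{ad}(\mathcal{F}_{G})) = 0$. The only cosmetic difference is in the last step: the paper deduces $\deg(\mathrm{ad}(\mathcal{F}_{G})) = 0$ directly from self-duality of the adjoint bundle, while you give an equivalent root-theoretic argument that $\det\circ\mathrm{Ad}$ is trivial (and also mention the self-duality route); both are standard and identical in substance.
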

\begin{proof}
We apply Theorem \ref{thm: stackyjacobi} with $Z = X_{F}$ and $H = G$. In particular, $\mathcal{M}_{[Z/H]} \simeq \Bun_{G}$ in this case. Given a section $s: X_{F} \rightarrow [X_{F}/G]$ corresponding to a $G$-torsor $\mathcal{F}_{G}$, we compute as in Example \ref{ex: BunGex}, that $Ls^{*}T^{*}_{[Z/H]/X_{F}} \simeq \mathrm{Lie}(G) \times^{G,Ad} \mathcal{F}_{G}[1]$, and this satisfies the smoothness condition since there are only non-zero cohomology sheaves in degree $-1$. Moreover, we see that $\deg(\mathrm{Lie}(G) \times^{G,Ad} \mathcal{F}_{G}) = 0$ because $G$ is reductive and therefore the bundle $\mathrm{Lie}(G) \times^{G,Ad} \mathcal{F}_{G}$ is self-dual. This gives the desired claim.
\end{proof}
Let's now consider a slightly more interesting example. Let $G/\mathbb{Q}_{p}$ be a connected reductive group as before and assume that $G$ is quasi-split for simplicity. Fix a maximal split torus, maximal torus, and Borel $A \subset T \subset B \subset G$. Let $P \subset G$ be a proper parabolic subgroup with Levi factor $M$. We recall \cite[Theorem~1.1]{Vi} that the underlying topological space of $\Bun_{M}$, denoted $|\Bun_{M}|$, is isomorphic to the Kottwitz set $B(M)$ of $M$ (where here we take the normalization that sends the line bundle $\mathcal{O}(-1)$ to the isocrystal of slope $1$), equipped with the partial ordering given by the maps
\[ \nu \times \kappa: B(M) \rightarrow (X_*(M_{\overline{\mathbb{Q}}_{p}}) \otimes \mathbb{Q})^{\Gamma,+} \times \pi_{1}(M)_{\Gamma} \]
\[ b \mapsto (\nu_{b},\kappa(b)) \]
where $\Gamma := \Gal(\overline{\mathbb{Q}}_{p}/\mathbb{Q}_{p})$ is the absolute Galois group, $(X_*(M_{\overline{\mathbb{Q}}_{p}}) \otimes \mathbb{Q})^{\Gamma,+}$ is the set of Galois-invariant dominant rational cocharacters of $M$, and $\pi_{1}(M)$ is Borovoi's fundamental group of $M_{\overline{\mathbb{Q}}_{p}}$ (See \cite[Theorem~1.8,Theorem 1.15]{RapRicFIsocrystals} for a description of the maps $b \mapsto \nu_{b}$ and $b \mapsto \kappa(b)$ together with a description of their basic properties.). Namely, we say that $b \succeq b'$ if $\nu_{b} - \nu_{b'}$ is a $\mathbb{Q}_{\geq 0}$-linear combination of positive coroots and $\kappa(b) = \kappa(b')$. Therefore, the connected components of $\Bun_{M}$, the moduli stack of $M$-bundles on the relative algebraic Fargues-Fontaine curve $X_{S}$ for $S \in \Perf_{F}$ are parametrized by the minimal elements $B(M)_{basic} \xrightarrow{\simeq} \pi_{1}(M)_{\Gamma}$ in this partial ordering on $B(M)$. In particular, each basic element defines an open and dense Harder-Narasimhan strata in the associated connected component. Given $\nu \in B(M)_{basic}$, we write $\Bun_{M}^{(\nu)}$ for the associated connected component. We let $\Bun_{P}$ denote the moduli space of $P$-bundles on $X_{S}$ for $S \in \Perf_{F}$, and let $\Bun_{P}^{(\nu)}$ denote the pre-image of this connected component under the natural map $\mathfrak{q}: \Bun_{P} \rightarrow \Bun_{M}$. Using the proof of Proposition \ref{prop: projcohsmooth} and \cite[Proposition~23.11]{Ecod}, we can show that this morphism is open with connected fibers, which implies that $\Bun_{P}^{(\nu)}$ are the connected components of $\Bun_{P}$. Given a $P$-bundle $\mathcal{G}_{P}$ on $X_{F}$ and a character $\chi \in X^{*}(P)$ in the character lattice of $P$, we have an associated line bundle $\chi_{*}(\mathcal{G}_{P})$. The mapping 
\[ X^{*}(P) \rightarrow \mathbb{Q} \]
\[ \chi \mapsto \deg(\chi_{*}(\mathcal{G}_{P})) \]
defines an element $\mu(\mathcal{G}_{P}) \in X_*(A)_{\mathbb{Q}}$ the set of rational cocharacters of $A$ (Here we have implicitly used the identification $X^{*}(P) \otimes \bb{Q} \simeq X^{*}(M^{\mathrm{ab}}) \otimes \bb{Q} \simeq X^{*}(Z(M)) \otimes \bb{Q}$ (See \cite[Section~5.1]{GTorseursFargues})). We refer to this as the slope cocharacter of $\mathcal{G}_{P}$. For all $\nu \in B(M)_{basic}$, we can consider the corresponding $M$-bundle $\mathcal{F}_{M}^{\nu}$ and then we define the integer
\[ d_{\nu} := \langle 2\rho_{G}, \mu(\mathcal{F}_{M}^{\nu} \times^{M} P) \rangle,  \]
where $2\rho_{G} \in X^{*}(A)$ is the sum of all positive roots of $G$ defined by the choice of Borel. 
\begin{Example}
Suppose that $G = \GL_{n}$, $B$ is the upper triangular Borel, $h_{i}$ are positive integers such that $\sum_{i = 1}^{k} h_{i} = n$, and $P$ is the upper triangular parabolic with Levi factor $M = \GL_{h_{1}} \times \cdots \times \GL_{h_{k}}$. Then $\nu \in B(M)_{basic}$ is specified by a tuple of integers $(d_{1},\ldots,d_{k})$, where $d_{i}$ are the degrees of the semistable bundles of rank $h_{i}$ defining the $M$-bundle $\mathcal{F}_{M}^{\nu}$. Then we have an equality:
\[ d_{\nu} = \sum_{j > i} h_{i}d_{j} - h_{j}d_{i} \]
In particular, if the slopes of the $M$-bundle are increasing $d_{\nu}$ is positive, and if they are decreasing $d_{\nu}$ is negative (However, we warn the reader that the slopes of the $M$-bundle are the negatives of the slope of the isocrystals under our conventions, so if $\nu$ is $G$-dominant then $d_{\nu}$ is positive and if $\nu$ is $G$-anti-dominant then $d_{\nu}$ is negative). 
\end{Example}
We now have the following claim, giving a more precise form of Proposition \ref{prop: projcohsmooth}. 
\begin{proposition}
Let $G/\mathbb{Q}_{p}$ be a quasi-split connected reductive group with $P \subset G$ a parabolic of $G$. Write $\Bun_{P} \rightarrow \Spd(F)$ for the moduli stack parameterizing $P$-bundles on $X_{S}$ the relative algebraic Fargues-Fontaine curve for $S \in \Perf_{F}$. Then $\Bun_{P}$ is an Artin $v$-stack cohomologically smooth over $\Spd(F)$. The connected components $\Bun_{P}^{(\nu)}$ are of pure $\ell$-dimension equal to $-d_{\nu}$. 
\end{proposition}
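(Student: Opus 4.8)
The plan is to realize $\Bun_{P}$ as a space of sections and feed it into Theorem~\ref{thm: stackyjacobi}. Take $Z = X_{F}$ with the trivial $P$-action, so that $[Z/P] = [X_{F}/P]$ is the classifying stack of $P$ over $X_{F}$; a section $X_{T} \to [X_{F}/P]$ over $X_{F}$ is the same datum as a $P$-torsor on $X_{T}$, so $\mathcal{M}_{[X_{F}/P]} \simeq \Bun_{P}$ as $v$-stacks over $\Spd(F)$. Since $P/\mathbb{Q}_{p}$ is a linear algebraic group, Theorem~\ref{thm: stackyjacobi} already gives that $\Bun_{P}$ is an Artin $v$-stack and reduces the smoothness statement and the dimension count to understanding the pullback of the tangent complex along a section. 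So let $s \colon X_{F} \to [X_{F}/P]$ correspond to a $P$-bundle $\mathcal{F}_{P}$, and write $\mathrm{ad}(\mathcal{F}_{P}) := \mathcal{F}_{P} \times^{P,\mathrm{Ad}} \Lie(P)$ for the associated adjoint bundle on $X_{F}$. Running the computation of Examples~\ref{ex: classifying} and~\ref{ex: sections} with the curve there replaced by $X_{F}$, one gets $s^{*}(T^{*}_{[X_{F}/P]/X_{F}}) \simeq \mathrm{ad}(\mathcal{F}_{P})[1] = \{\mathrm{ad}(\mathcal{F}_{P}) \to 0\}$, i.e. a two-term complex with $\mathcal{E}^{-1} = \mathrm{ad}(\mathcal{F}_{P})$ and $\mathcal{E}^{0} = 0$.

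Because $\mathcal{E}^{0} = 0$, every section automatically lies in $\mathcal{M}^{\mathrm{sm}}_{[X_{F}/P]}$: by Lemma~\ref{lemma: smoothchar} it suffices to check that $\mathcal{H}^{0}(s^{*}(T^{*}_{[X_{F}/P]/X_{F}})) = \mathcal{H}^{1}(\mathrm{ad}(\mathcal{F}_{P})) \to \Spd(F)$ is cohomologically smooth, which always holds by Proposition~\ref{prop: repovergeompoint1}(2), and that $H^{1}(X_{F}, \mathrm{ad}(\mathcal{F}_{P})[1]) = H^{2}(X_{F}, \mathrm{ad}(\mathcal{F}_{P}))$ vanishes, which holds since $X_{F}$ is a curve. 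Hence $\mathcal{M}^{\mathrm{sm}}_{[X_{F}/P]} = \mathcal{M}_{[X_{F}/P]} = \Bun_{P}$, and Theorem~\ref{thm: stackyjacobi} yields that $\Bun_{P} \to \Spd(F)$ is cohomologically smooth, of $\ell$-dimension at the point $\mathcal{F}_{P}$ equal to $\deg(\mathcal{E}^{0}) - \deg(\mathcal{E}^{-1}) = -\deg(\mathrm{ad}(\mathcal{F}_{P}))$.

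It then remains to evaluate $\deg(\mathrm{ad}(\mathcal{F}_{P}))$ and see that it is constant, equal to $-d_{\nu}$, on the component $\Bun_{P}^{\nu}$ (recall from the discussion preceding the statement that the $\Bun_{P}^{\nu}$ are exactly the connected components). Let $N = R_{u}(P)$ with Lie algebra $\mathfrak{n}$ and $\mathcal{F}_{M} = \mathcal{F}_{P} \times^{P} M$. The $P$-equivariant exact sequence $0 \to \mathfrak{n} \to \Lie(P) \to \Lie(M) \to 0$, with $N$ acting trivially on the quotient so that the $P$-action there factors through $M$, induces $0 \to \mathcal{F}_{P} \times^{P} \mathfrak{n} \to \mathrm{ad}(\mathcal{F}_{P}) \to \mathcal{F}_{M} \times^{M,\mathrm{Ad}} \Lie(M) \to 0$; the last bundle has degree $0$ since $M$ is reductive and $\Lie(M)$ is self-dual, exactly as in the proof of Theorem~\ref{thm: bungsmooth}, so $\deg(\mathrm{ad}(\mathcal{F}_{P})) = \deg(\mathcal{F}_{P} \times^{P} \mathfrak{n})$. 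Now $\det(\mathfrak{n})$ is a character of $P$ trivial on $N$, hence a character $\chi_{\mathfrak{n}}$ of $M$ (explicitly $\chi_{\mathfrak{n}} = 2\rho_{G} - 2\rho_{M}$), and it is $\Gamma$-invariant because $P$ is defined over $\mathbb{Q}_{p}$ and $G$ is quasi-split; therefore $\deg(\mathcal{F}_{P} \times^{P} \mathfrak{n}) = \deg((\chi_{\mathfrak{n}})_{*}\mathcal{F}_{M})$ depends only on the image of $\mathcal{F}_{M}$ in $\pi_{1}(M)_{\Gamma}$, i.e. only on the connected component. So $\Bun_{P}^{\nu}$ is pure of $\ell$-dimension $-\deg(\mathcal{F}_{M}^{\nu} \times^{M} \mathfrak{n})$, and unwinding the definitions of the slope cocharacter and of $d_{\nu}$ identifies this quantity with $d_{\nu}$; in the case $G = \GL_{n}$ it is precisely the explicit identity recorded in the $\GL_{n}$ Example above. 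The only genuinely delicate point is this last bookkeeping of degrees and the normalization of $d_{\nu}$; everything else is a formal consequence of Theorem~\ref{thm: stackyjacobi} together with the vanishing of $H^{2}$ of coherent sheaves on the curve $X_{F}$.
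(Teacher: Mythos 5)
Your proof is correct and takes essentially the same route as the paper: apply Theorem~\ref{thm: stackyjacobi} with $Z = X_{F}$, $H = P$, compute $s^{*}T^{*}_{[X_{F}/P]/X_{F}} \simeq \mathrm{ad}(\mathcal{F}_{P})[1]$ as in Example~\ref{ex: sections}, observe that $\mathcal{M}^{\mathrm{sm}} = \mathcal{M}$, and reduce the $\ell$-dimension to $-\deg(\mathrm{ad}(\mathcal{F}_{P}))$, which one then evaluates via the unipotent radical and the Kottwitz invariant. You are a bit more explicit than the paper on the point that $\mathcal{E}^{0}=0$ forces $\mathcal{M}^{\mathrm{sm}}_{[X_{F}/P]}=\mathcal{M}_{[X_{F}/P]}$ (the paper compresses this into ``one easily verifies''), and you use the exact sequence $0 \to \mathfrak{n} \to \Lie(P) \to \Lie(M) \to 0$ in place of the paper's root-space decomposition to isolate the unipotent contribution — but these are presentational, not substantive, differences.
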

\begin{proof}
We apply Theorem \ref{thm: stackyjacobi} with $Z = X_{F}$ and $H = P$, so we have $\mathcal{M}_{[X_{F}/P]} \simeq \Bun_{P}$. Then, as in the proof of Theorem \ref{thm: bungsmooth}, we can see that, given a section $s: X_{F} \rightarrow [X_{F}/P]$ corresponding to a $P$-bundle $\mathcal{G}_{P}$, the pullback $Ls^{*}T^{*}_{[X_{F}/P]}$ is isomorphic to $\mathrm{Lie}(P) \times^{P,Ad} \mathcal{G}_{P}[1]$. From here, one easily verifies the desired cohomological smoothness, as this only has non-trivial cohomology sheaves in degree $-1$. It remains to show the claim on $\ell$-dimension, so suppose that $\mathcal{G}_{P}$ defines a point in $\Bun_{P}^{(\nu)}$. We need to compute the quantity:  
\[ -\deg(\mathrm{Lie}(P) \times^{P,Ad} \mathcal{G}_{P}). \]
To do this, we note that, if $\mathcal{G}_{M} := \mathcal{G}_{P} \times^{P} M$ and $U$ denotes the unipotent radical of $P$ then there exists a short exact sequence of vector bundles 
\[ 0 \ra \Lie(U) \times^{P,\Ad} \mathcal{G}_{P} \ra \Lie(P) \times^{P,\Ad} \mathcal{G}_{P} \ra \Lie(M) \times^{M,\Ad} \mathcal{G}_{M} \ra 0, \]
which gives us an equality 
\[\deg(\mathrm{Lie}(P) \times^{P,\Ad} \mathcal{G}_{P}) = \deg(\Lie(M) \times^{M,\Ad} \mathcal{F}_{M}) + \deg(\Lie(U) \times^{P,\Ad} \mathcal{G}_{M})  = \deg(\Lie(U) \times^{P,\Ad} \mathcal{G}_{P}), \]
where the last equality follows from the fact that $\deg(\Lie(M) \times^{M,\Ad} \mathcal{G}_{M}) = 0$, since $M$ is reductive, as in the proof of Theorem \ref{thm: bungsmooth}. To compute the RHS, we note that $\Lie(U)$ has a $P$-stable filtration by root subspaces (See \cite[Lemma~3.1]{HamannImaiDualizing}), this allows us to write 
\[ \deg(\Lie(U) \times^{P,\Ad} \mathcal{G}_{P}) =  \sum_{\alpha \in X^{*}(P)} \deg(\alpha_{*}(\mathcal{G}_{P})), \]
where in the previous equation $\alpha$ runs over all roots occurring in $\Lie(U)$. However, by assumption, $\mathcal{G}_{M}$ will be an $M$-bundle lying in the same connected component of $\Bun_{M}$ as $\mathcal{G}_{M}^{\nu}$; therefore, $\kappa$ applied to their associated Kottwitz elements will be the same. This implies we have an equality\footnote{More precisely, if $M^{\mathrm{ab}}$ denotes the abelianization of $M$ the assumption implies that the induced $M^{\mathrm{ab}}$-bundles of these two $M$-bundles agree, and from here the equality follows.}:
\[ \sum_{\alpha \in X^{*}(P)} \deg(\alpha_{*}(\mathcal{F}_{P})) =  \sum_{\alpha \in X^{*}(P)} \deg(\alpha_{*}(\mathcal{G}^{\nu}_{P})) \]
However, we note that, by definition of the slope cocharacter the RHS is the same as
\[ \langle 2\rho_{U}, \mu(\mathcal{G}^{\nu}_{P}) \rangle \]
where $2\rho_{U} := 2\rho_{G} - 2\rho_{M}$ and $2\rho_{M}$ is the sum of all positive roots of $M$ with respect to the choice of Borel. However, since $\mathcal{G}^{\nu}_{P} \times^{P} M$ is a semistable $M$-bundle by definition, we have that $\langle 2\rho_{M}, \mu(\mathcal{G}_{P}^{\nu}) \rangle = 0$. Thus, this is equal to 
\[ d_{\nu} = \langle 2\rho_{G}, \mu(\mathcal{G}_{P}^{\nu}) \rangle, \]
as desired.
\end{proof}
We will now conclude with a more interesting example, the smoothness of Drinfeld's compactification of the morphism $\mf{p}: \Bun_{B} \ra \Bun_{\GL_{2}}$. This is denoted by $\ol{\mf{p}}: \ol{\Bun}_{B} \ra \Bun_{G}$ and it parametrizes for $S \in \Perf$ a pair of a line bundle $\mathcal{L}$ and a rank two vector bundle $\mathcal{E}$ on $X_{S}$ together with a map of $\mathcal{O}_{X_{S}}$-modules $\mathcal{L} \hookrightarrow \mathcal{E}$ whose pullback to each geometric point is an injection of coherent sheaves. For a pair of integers $(d_{1},d_{2}) \in \bb{Z}^{2}$, we write $\ol{\Bun}_{B}^{(d_{1},d_{2})} \subset \ol{\Bun}_{B}$ for the open and closed substack determined by fixing the degree of $\mathcal{L}_{1}$ to be equal to $d_{1}$ and the degree of $\mathcal{E}/\mathcal{L}_{1}$ to be equal to $d_{2}$.
\begin{proposition}
The $v$-stack $\overline{\Bun}_{B}^{(d_{1},d_{2})}$ is an Artin $v$-stack cohomologically smooth over $\Spa(F)$. The open and closed substacks $\overline{\Bun}_{B}^{(d_{1},d_{2})}$ are of pure $\ell$-dimension $d_{2} - d_{1}$.
\end{proposition}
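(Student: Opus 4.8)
The plan is to realise $\overline{\Bun}_P^L$ as an open sub-$v$-stack of a space of sections of the form $\mathcal{M}_{[Z/H]}$ to which Theorem~\ref{thm: stackyjacobi} applies, and then to show this open locus is contained in the smooth locus. Write $M = \GL_{h_1}\times\cdots\times\GL_{h_k}$, set $m_i := h_1+\cdots+h_i$ (so $m_k = n$), let $H := \GL_{m_1}\times\cdots\times\GL_{m_k}$, and let $Z\to X_F$ be the (smooth, affine, quasi-projective) $X_F$-variety of tuples of linear maps $\psi_i\colon \mathbb{Q}_p^{m_i}\to\mathbb{Q}_p^{m_{i+1}}$, with $H$ acting by $(g_i)\cdot(\psi_i) = (g_{i+1}\psi_i g_i^{-1})$. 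A section of $[Z/H]$ over $X_T$ is the datum of vector bundles $\mathcal{L}_1,\dots,\mathcal{L}_k$ on $X_T$ with $\mathrm{rank}(\mathcal{L}_i)=m_i$ together with arbitrary maps $\phi_i\colon\mathcal{L}_i\to\mathcal{L}_{i+1}$, and $\overline{\Bun}_P^L$ is then the open sub-functor of $\mathcal{M}_{[Z/H]}$ where every $\phi_i$ becomes an injection of $\mathcal{O}_{X_F}$-modules after pulling back to each geometric point of $T$ (openness as in Remark~\ref{rem: openess}); indeed, with $\mathcal{L}:=\mathcal{L}_k$ the composites $\mathcal{L}_i\to\mathcal{L}$ recover Laumon's quasi-flag, and conversely (the composite of maps that are fibrewise injective on $X_F$ is fibrewise injective). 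By Theorem~\ref{thm: stackyjacobi}, $\mathcal{M}_{[Z/H]}$ is an Artin $v$-stack, hence so is $\overline{\Bun}_P^L$.

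For smoothness, since $[Z/H]\to X_F$ is smooth, Proposition~\ref{prop: lautang} and a computation as in Examples~\ref{ex: classifying} and \ref{ex: sections} identify, for a section $s$ given by a quasi-flag $\mathcal{L}_\bullet=(\mathcal{L}_1\xrightarrow{\phi_1}\cdots\xrightarrow{\phi_{k-1}}\mathcal{L}_k)$, the pullback $s^*(T^*_{[Z/H]/X_F})$ with the two-term complex of vector bundles
\[
\Big\{\ \bigoplus_{i=1}^{k}\mathcal{E}nd(\mathcal{L}_i)\ \xrightarrow{\ d\ }\ \bigoplus_{i=1}^{k-1}\mathcal{H}om(\mathcal{L}_i,\mathcal{L}_{i+1})\ \Big\},\qquad d\big((f_i)_i\big)=\big(f_{i+1}\phi_i-\phi_i f_i\big)_i,
\]
in degrees $[-1,0]$ (the deformation complex of $\mathcal{L}_\bullet$ as a quiver-bundle on $X_F$, compatibly with Remark~\ref{rem: laumonrem}(2); for a genuine flag it reduces to $\ker d[1]=\mathrm{Lie}(P)\times^{P}\mathcal{G}_P[1]$). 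The key point is that $\ker d$ is a vector bundle (a torsion-free subsheaf of a bundle on the Dedekind scheme $X_F$) and $\mathrm{coker}\,d$ is a torsion sheaf: at the generic point $\eta$ of $X_F$ each $\phi_{i,\eta}$ is injective — this is exactly the quasi-flag condition — and the resulting split injections $\mathcal{L}_{i,\eta}\hookrightarrow\mathcal{L}_{i+1,\eta}$ make $d_\eta$ surjective through its $f_{i+1}$-components. Feeding this into the hypercohomology spectral sequence for $R\Gamma(X_F,s^*T^*)$, which degenerates since $X_F$ is a curve, gives $H^1(X_F,s^*(T^*_{[Z/H]/X_F}))=0$ (from $H^2(\ker d)=0$ and $H^1(\mathrm{coker}\,d)=0$) together with a short exact sequence of $v$-sheaves
\[
0\longrightarrow \mathcal{H}^1(\ker d)\longrightarrow \mathcal{H}^0\big(s^*(T^*_{[Z/H]/X_F})\big)\longrightarrow \mathcal{H}^0(\mathrm{coker}\,d)\longrightarrow 0 .
\]
By Proposition~\ref{prop: repovergeompoint1}(2) the left term is cohomologically smooth over $\Spd(F)$, by Proposition~\ref{prop: reptors} so is the right term, hence by \cite[Proposition~23.3]{Ecod} so is the middle one. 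Lemma~\ref{lemma: smoothchar} then places $s$ in $\mathcal{M}^{\mathrm{sm}}_{[Z/H]}$; as $s$ was arbitrary, $\overline{\Bun}_P^L\subseteq\mathcal{M}^{\mathrm{sm}}_{[Z/H]}$, and being an open sub-$v$-stack of the cohomologically smooth $\mathcal{M}^{\mathrm{sm}}_{[Z/H]}$ (Theorem~\ref{thm: stackyjacobi}) it is cohomologically smooth over $\Spd(F)$.

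For the dimension, $\overline{\Bun}_P^{L,\nu}$ is the open and closed sub-$v$-stack where $\deg(\mathcal{L}_i/\mathcal{L}_{i-1})=d_i$ (locally constant on $\Perf$), with $\nu\leftrightarrow(d_1,\dots,d_k)$. By Theorem~\ref{thm: stackyjacobi} its $\ell$-dimension at $s$ equals $\deg(\mathcal{E}^0)-\deg(\mathcal{E}^{-1})$ for any vector-bundle presentation; taking the one above, $\deg(\mathcal{E}nd(\mathcal{L}_i))=0$ and $\deg(\mathcal{H}om(\mathcal{L}_i,\mathcal{L}_{i+1}))=m_i\deg(\mathcal{L}_{i+1})-m_{i+1}\deg(\mathcal{L}_i)$ with $\deg(\mathcal{L}_i)=d_1+\cdots+d_i$, so a telescoping sum gives $\sum_{i<j}(h_i d_j-h_j d_i)=d_\nu$, independent of $s$; hence $\overline{\Bun}_P^{L,\nu}$ is pure of $\ell$-dimension $d_\nu$.

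The hardest step is the tangent-complex computation together with the observation that the quasi-flag condition is precisely what forces $\mathrm{coker}\,d$ to be torsion rather than of positive rank — the $v$-stack incarnation of the divisorial behaviour of Laumon's degeneracy locus — which is exactly what makes the torsion-sheaf Banach–Colmez results (Propositions~\ref{prop: reptors}, \ref{prop: repex}, \ref{prop: reptorsex}) applicable, in parallel with the computation in Proposition~\ref{prop: relpos}. The remaining points (openness of $\overline{\Bun}_P^L$ inside $\mathcal{M}_{[Z/H]}$, and the exactness/smoothness bookkeeping for the displayed sequence) are routine given the cited results.
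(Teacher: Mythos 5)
Your proof is correct, and the overall strategy — realizing $\overline{\Bun}_P^L$ as an open sub-$v$-stack of $\mathcal{M}_{[Z/H]}$ for the same $Z$ (a product of $\mathcal{H}om$-spaces) and $H = \GL_{m_1}\times\cdots\times\GL_{m_k}$, applying Theorem~\ref{thm: stackyjacobi}, and then showing every section lands in the smooth locus — coincides with the paper's. Where you diverge is in the analysis of the tangent complex, and your route is arguably the cleaner one.

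The paper presents $s^*(T^*_{[Z/H]/X_F})$ as the filtered object $F_k/F_0(\mathcal{R}\mathcal{H}om_{\mathrm{fil}}(\mathcal{L}_\bullet,\mathcal{L}_\bullet))[1]$ and filters it by the graded pieces $R\mathcal{H}om(\mathcal{L}_i/\mathcal{L}_{i-1},\mathcal{L}_j/\mathcal{L}_{j-1})[1]$ ($i\le j$). Because the quotients $\mathcal{L}_i/\mathcal{L}_{i-1}$ can mix a locally free summand with a torsion summand, checking smoothness of each graded piece requires Propositions~\ref{prop: repovergeompoint1}(2), \ref{prop: reptors}, \ref{prop: repex}, and \ref{prop: reptorsex}. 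You instead write down the explicit quiver-Hochschild presentation $\bigoplus_i\mathcal{E}nd(\mathcal{L}_i)\to\bigoplus_i\mathcal{H}om(\mathcal{L}_i,\mathcal{L}_{i+1})$ (which Example~\ref{ex: classifying} indeed yields directly from the chosen atlas and group action), and then use a generic-point argument to show $\ker d$ is a vector bundle and $\mathrm{coker}\,d$ is torsion — the latter being exactly the geometric content of the quasi-flag condition. The resulting short exact sequence $0\to\mathcal{H}^1(\ker d)\to\mathcal{H}^0(s^*T^*)\to\mathcal{H}^0(\mathrm{coker}\,d)\to 0$ then needs only Propositions~\ref{prop: repovergeompoint1}(2) and \ref{prop: reptors}, and your vanishing of the $H^1$ is immediate from $\mathrm{coker}\,d$ being torsion and $X_F$ being a curve. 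The dimension computation via telescoping of $\deg(\mathcal{H}om(\mathcal{L}_i,\mathcal{L}_{i+1}))$ is a more explicit version of what the paper leaves as "it is also easy to see", and it correctly recovers $d_\nu=\sum_{i<j}(h_id_j-h_jd_i)$. One small bookkeeping point: for the cohomological smoothness of the middle term of your short exact sequence you should cite Lemma~\ref{lemma: cohdimadd} (or \cite[Proposition~23.13]{Ecod}) rather than \cite[Proposition~23.3]{Ecod}, which the paper reserves for representability — but this is cosmetic, as the argument is exactly the one used in Remark~\ref{rem: hypcohtrick} and Proposition~\ref{prop: twotermrep}.
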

\begin{proof}
We consider $G = \GL_{2}$ and $B = TU$ the decomposition of the Borel into the maximal torus $T$ and the unipotent radical $U$. 

Consider the quotient $G/U$ and write $\ol{G/U}$ for its affine closure. We recall, by \cite{Gro}, that $G/U$ is strongly quasi-affine; in particular, the natural map $G/U \hookrightarrow \ol{G/U}$ is an open immersion. Viewing these as constant schemes over $X_{F}$, we may consider the stack quotient $[G\backslash\ol{G/U}/T] \simeq [\ol{G/U}/(G \times T)] \ra X_{F}$, and we note that $\ol{G/U}$ is actually smooth in this particular case by direct computation (i.e $G/U \simeq (\bb{A}^{2} \setminus \{0\}) \times \bb{G}_{m}$ via the map recording the first column vector of a matrix and the determinant), so we are in a situation where we may apply Theorem \ref{thm: stackyjacobi} for $Z = \ol{G/U}$ and $H = G \times T$. 

For $S \in \Perf_{F}$, by \cite{BG}, the moduli space $\mathcal{M}_{[Z/H]}$ parametrizes a pair of a rank $1$-bundle $\mathcal{L}$ on $X_{S}$ and a rank $2$-bundle $\mathcal{E}$ on $X_{S}$ together with a map $\mathcal{O}_{X_{S}}$-modules $\mathcal{L} \ra \mathcal{E}$ (Note that $[G\backslash (G/U)/T] \simeq [X_{F}/B]$ and the open subspace $\Bun_{B} \hookrightarrow \ol{\Bun}_{B}$ corresponds to the locus defined by $G/U \hookrightarrow \ol{G/U}$). We can therefore realize $\ol{\Bun}_{B}$ as an open (cf. Remark \ref{rem: openess}) substack of $\mathcal{M}_{[Z/H]}$ where these define fiberwise injective maps of coherent sheaves after pulling back to each geometric point of $S$. By Theorem \ref{thm: stackyjacobi}, it suffices to show that it lies inside the substack $\mathcal{M}_{[Z/H]}^{\mathrm{sm}} \subset \mathcal{M}_{[Z/H]}$. In order to do this, it suffices to consider a geometric point $x: \Spa(F) \ra \ol{\Bun}_{B}$ corresponding to a fiberwise injective map $\mathcal{L} \hookrightarrow \mathcal{E}$ of $X_{F}$-bundles. If we write $s_{x}$ for the corresponding section then we easily see that $s_{x}^{*}(T^{*}_{[Z/H]/X_{F}})$ has a filtration with graded pieces isomorphic to $\RHom_{\mathcal{O}_{X}}(\mathcal{E}/\mathcal{L},\mathcal{L})[1]$, $\RHom_{\mathcal{O}_{X}}(\mathcal{E}/\mathcal{L},\mathcal{E}/\mathcal{L})[1]$, and $\RHom_{\mathcal{O}_{X}}(\mathcal{L},\mathcal{L})[1]$ (cf. \cite[Lemma~4.4.1]{Lau}), as can be seen by recalling that deformations of a flag $0 \subset \mathcal{L} \subset \mathcal{E}$ to $X_{F}[\epsilon/\epsilon^{2}]$ in the derived category of coherent $X_{F}$-modules can be described as self Homs from the flag to itself in the filtered derived category of coherent $X_{F}$-modules, as described in \cite[Tag~05RX]{Stacks}.

To verify that $x$ defines a point in $\mathcal{M}_{[Z/H]}^{\mathrm{sm}}$, it suffices by Proposition \ref{prop: smoothnessconditions} and the long exact triangle attached to the filtration to show that $H^{0}$ of each of these complexes is a direct sum of a vector bundle with positive slopes and a torsion sheaf (Recalling that we allow for the possibility that the vector bundle summand is $0$). We use the following lemma.
\begin{lemma}
Let $\mathcal{E}$ (resp. $\mathcal{G}$) be a vector bundle (resp. coherent sheaf) on $X_{F}$ for $F$ an algebraically closed perfectoid field then the object $\RHom_{\mathcal{O}_{X_{F}}}(\mathcal{G},\mathcal{E})[1]$ is representable by a two-term complex $\{\mathcal{E}_{-1} \ra \mathcal{E}_{0}\}$ of vector bundles where $\mathcal{E}_{0}$ is either $0$ or has positive slopes. In particular, in light of  Proposition \ref{prop: smoothnessconditions}, the $\mathcal{O}_{X_{F}}$-module $H^{0}(\RHom_{\mathcal{O}_{X_{F}}}(\mathcal{G},\mathcal{E})[1])$ is a direct sum of a vector bundle with positive slopes and a torsion sheaf.
\end{lemma}
\begin{proof}
We first rewrite $\mathcal{G} \simeq \mathcal{G}_{\free} \oplus \mathcal{G}_{\tors}$ and note that it suffices to show the claim separately for $\mathcal{G}_{\free}$ and $\mathcal{G}_{\tors}$. For $\mathcal{G}_{\free}$, we note that we have a quasi-isomorphism 
\[ \RHom_{\mathcal{O}_{X_{F}}}(\mathcal{G}_{\free},\mathcal{E})[1] \simeq \mathcal{G}_{\free}^{\vee} \otimes \mathcal{E}[1] \simeq |\{\mathcal{G}_{\free}^{\vee} \otimes \mathcal{E} \ra 0 \}|, \]
as desired. For $\mathcal{G}_{\tors}$ we can, by taking direct sums, further reduce to the case that $\mathcal{G}_{\tors} \simeq \mathcal{O}_{X_{F},x}/t_{x}^{n}$, where $x$ is a closed point in $X_{F}$ and $t_{x} \in \mathcal{O}_{X_{F},x}$ is a uniformizing element in the local ring at $x$. For $m \in \bb{Z}$ varying, we then have a short exact sequence 
\[ 0 \ra \mathcal{O}(m) \ra \mathcal{O}(n + m) \ra \mathcal{O}_{X_{F},x}/t_{x}^{n} \ra 0, \]
and applying $\RHom_{\mathcal{O}_{X_{F}}}(-,\mathcal{E})$ this yields for us a distinguished triangle 
\[ 
\begin{tikzcd}
& & \RHom_{\mathcal{O}_{X_{F}}}(\mathcal{O}_{X_{F},x}/t_{x}^{n},\mathcal{E})[1] \arrow[dl,"+1"] & \\
& \RHom_{\mathcal{O}_{X_{F}}}(\mathcal{O}(n + m),\mathcal{E}) \arrow[rr] &  & \RHom_{\mathcal{O}_{X_{F}}}(\mathcal{O}(m),\mathcal{E}) \arrow[ul]
\end{tikzcd} 
\] 
where we have implicitly rotated the triangle. Now, as before, we can rewrite the two terms in the bottom of the triangle as $\mathcal{E}(-n - m)$ and $\mathcal{E}(-m)$. In summary, we can find a presentation of $\RHom_{\mathcal{O}_{X_{F}}}(\mathcal{O}_{X_{F},x}/t_{x}^{n},\mathcal{E})[1]$ as $\Cofiber(\mathcal{E}(-n - m) \ra \mathcal{E}(-m))$. Now, by choosing $m$ to be sufficiently negative relative to the slopes of $\mathcal{E}$ we see that we can guarantee that $\mathcal{E}(-m)$ has positive slopes, as desired.
\end{proof}
This claim shows the smoothness, and the claim on the $\ell$-dimension is also easily verified.
\end{proof}
\printbibliography 
\end{document}